\definecolor{halfgray}{gray}{0.55}
\definecolor{webgreen}{rgb}{0,.5,0}
\definecolor{webbrown}{rgb}{.6,0,0}
\definecolor{Maroon}{cmyk}{0, 0.87, 0.68, 0.32}
\definecolor{RoyalBlue}{cmyk}{1, 0.50, 0, 0}
\definecolor{Black}{cmyk}{0, 0, 0, 0}
\declaretheorem[numberwithin=section]{theorem}
\declaretheorem[sibling=theorem, name=Lemma]{lemma}
\declaretheorem[numbered=no, name=Remark] {rk}
\numberwithin{equation}{section}
\newcommand{\smallfrac}{\tfrac}
\newcommand{\sss}           { \scriptscriptstyle }
\newcommand{\Var}       {{{\rm Var}_p}}
\newcommand{\e}{\mathrm{e}}
\newcommand{\indi}{\mathbb{1}}
\newcommand{\Ep}{\mathbb{E}_p}
\newcommand{\Zk}{Z_{\sss \ge k}}
\newcommand{\dmax}{\Delta_{\sss \mathrm{max}}}
\newcommand{\dbr}{\partial B(r)}
\newcommand{\A}{\Acal}
\newcommand{\mnot}{t_{\sss {\rm mix}}}
\newcommand{\tmix}{{t_{\sss {\rm mix}}}}
\newcommand{\p}{P}
\newcommand{\vep} {\varepsilon}
\DeclareMathAlphabet{\pazocal}{OMS}{zplm}{m}{n}
\newcommand{\Acal}  {\pazocal{A}}
\newcommand{\Bcal}  {\pazocal{B}}
\newcommand{\Ccal}  {\pazocal{C}}
\newcommand{\Ecal}  {\pazocal{E}}
\newcommand{\Fcal}  {\pazocal{F}}
\newcommand{\Hcal}  {\pazocal{H}}
\newcommand{\Jcal}  {\pazocal{J}}
\newcommand{\Lcal}  {\pazocal{L}}
\newcommand{\Mcal}  {\pazocal{M}}
\newcommand{\Vcal}  {\pazocal{V}}
\newcommand{\R}     {\mathbb{R}}
\renewcommand{\P}   {\mathbb{P}}
\newcommand{\E}     {\mathbb{E}}
\newcommand{\Z}     {\mathbb{Z}}
\newcommand{\Zd}    {\mathbb{Z}^d}
\newcommand{\Pp}        {\mathbb{P}_p}
\newcommand{\Ppc}       {\mathbb{P}_{p_c}}
\newcommand{\Epc}       {\mathbb{E}_{p_c}}
\newcommand{\off}{\hbox{ {\rm off} }}
\renewcommand{\and}{\hbox{ {\rm and} }}
\newcommand{\with}{\hbox{ {\rm with} }}
\newcommand{\F}{\Fcal}
\renewcommand{\p}{\mbox{\bf p}}
\newcommand{\Tmix}{T_{\sss \mathrm{mix}}}
\newcommand{\supa}{\sup_{A \subset \Ecal}\,}
\newcommand{\ule}{\underline{e}}
\newcommand{\ole}{\overline{e}}
\newcommand{\rad}{r'}
\DeclareMathOperator{\diam} {diam}
\newcommand{\Cmax} {{\Ccal_1}}
\def\arrowfillCS#1#2#3#4{
   \thickmuskip0mu\medmuskip\thickmuskip\thinmuskip\thickmuskip
   \relax#4#1\mkern-7mu
   \cleaders\hbox{$#4\mkern-2mu#2\mkern-2mu$}\hfill
   \mkern-7mu#3
}
\def\lrfill{\arrowfillCS\leftarrow\relbar\rightarrow\relax}
\renewcommand*\env@matrix[1][\arraystretch]{%
  \edef\arraystretch{#1}%
  \hskip -\arraycolsep
  \let\@ifnextchar\new@ifnextchar
  \array{*\c@MaxMatrixCols c}}
\newcommand{\conn}      {\leftrightarrow}
\newcommand{\lrr}	    {\stackrel{\le r}{\lrfill}}
\newcommand{\lrre}	    {\stackrel{=r}{\lrfill}}
\begin{document}
\title{Slightly subcritical hypercube~percolation}
\author{Tim Hulshof}
\address{Department of Mathematics and Computer Science, Eindhoven University of Technology, PO Box 513, 5600 MB Eindhoven, the Netherlands.}
\email{w.j.t.hulshof@tue.nl}
\author{Asaf Nachmias}
\address{Department of Mathematical Sciences, Tel Aviv University, Tel Aviv 69978, Israel.}
\email{asafnach@post.tau.ac.il}
\date{\today}
\begin{abstract}
We study bond percolation on the hypercube $\{0,1\}^m$
in the slightly subcritical regime where $p = p_c (1-\vep_m)$ and $\vep_m = o(1)$ but $\vep_m \gg 2^{-m/3}$ and study the clusters of largest volume and diameter. We establish that with high probability the largest component has cardinality $\Theta\left(\vep_m^{-2} \log(\vep_m^3 2^m)\right)$, that the maximal diameter of all clusters is $(1+o(1)) \vep_m^{-1} \log(\vep_m^3 2^m)$, and that the maximal mixing time of all clusters is $\Theta\left(\vep_m^{-3} \log^2(\vep_m^3 2^m)\right)$.

These results hold in different levels of generality, and in particular, some of the estimates hold for various classes of graphs such as high-dimensional tori, expanders of high degree and girth, products of complete graphs, and infinite lattices in high dimensions.
\end{abstract}
\maketitle

\vspace{1em}
{\small
\noindent
{\it MSC 2010.} 60K35, 82B43.

\noindent
{\it Key words and phrases.}
Percolation, hypercube, subcriticality, diameter, mixing time.
}
\vspace{1em}
\hrule
\vspace{.5em}

\section{Introduction \& main results}
The \emph{hypercube} $Q_m$ is the graph with vertex set $\{0,1\}^m$ such that any two vertices of Hamming distance $1$ form an edge. We consider \emph{bond percolation} on it, that is, the random subgraph of $Q_m$ obtained by  independently removing each edge with probability $1-p \in [0,1]$ and retaining it otherwise. See Figure \ref{fig:hypercube} for an illustration.

\begin{figure}[t]
\includegraphics[width =.49\textwidth]{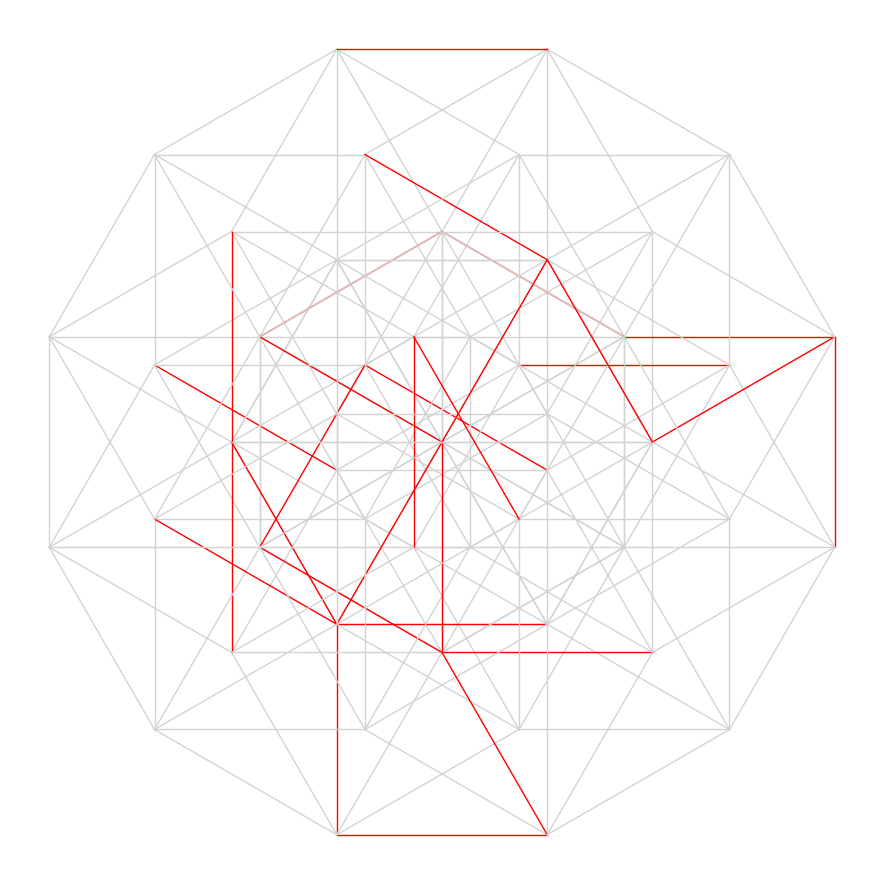}
\includegraphics[width =.49\textwidth]{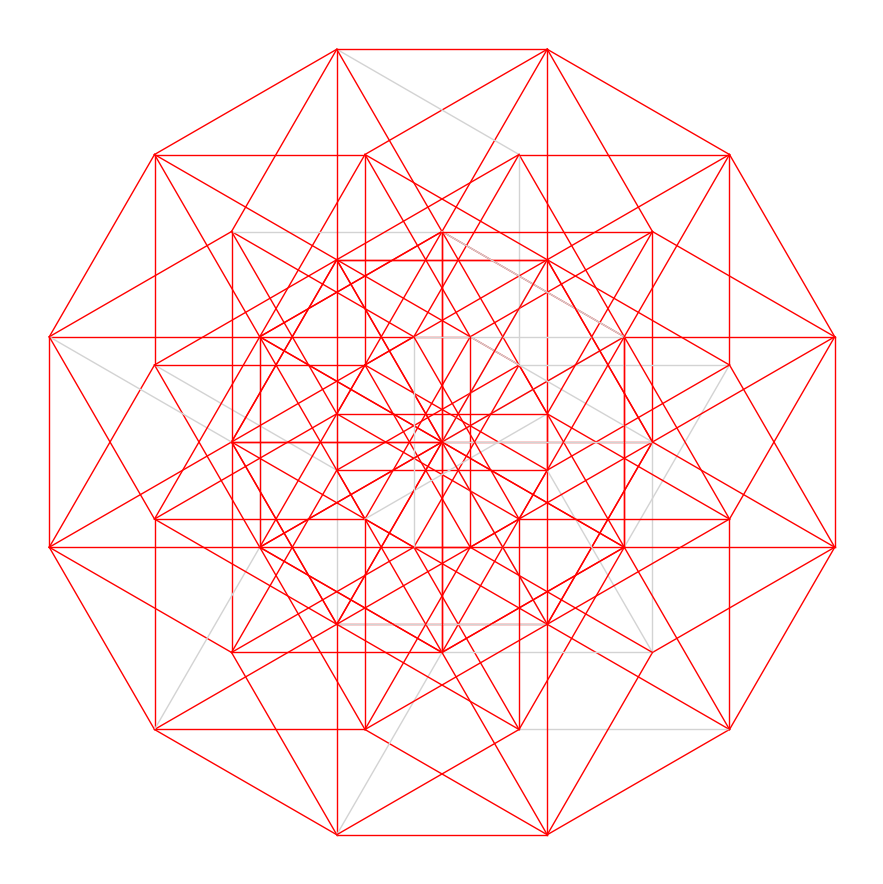}
\caption{Two realizations of percolation on $Q_6$ with $p=0.13$ (left) and $p = 0.87$ (right). The open edges are colored red.}\label{fig:hypercube}
\end{figure}

Hypercube percolation was introduced by Erd\H{o}s and Spencer \cite{ErdSpe79} and is compared there with the \emph{Erd\H{o}s-R\'enyi random graph} (ERRG) $G(n,p)$, which is bond percolation on the complete graph $K_n$ with percolation parameter $p$. Erd\H{o}s and Spencer investigated how the geometry of the hypercube affects the geometry of the percolation clusters and speculated that around the \emph{critical} probability hypercube percolation and the ERRG behave qualitatively alike.

Theirs and subsequent investigations (e.g.\ \cite{Boll82, Lucz90, JanKnuLucPit93,Aldo97} for the ERRG, and \cite{AjtKomSze82,BolKohLuc92,BorChaHofSlaSpe05a, BorChaHofSlaSpe05b, BorChaHofSlaSpe06, HofNac12} for hypercube percolation) confirm that this speculation holds to a very high degree. In fact, with various levels of success, this paradigm holds not just for the hypercube but for many classes of ``high-dimensional'' graphs. In other words, the behavior of the ERRG is \emph{universal}.

Due to the ERRG's complete symmetry, one can employ combinatorial arguments and branching processes comparisons to study it with great precision. Near the critical probability, these methods tend to fail in the presence of
geometry, even very simple geometries, such as the hypercube's.
Finding arguments that work in greater generality is the main challenge and motivation for studying hypercube percolation.

To understand the context of our results, it helps to first discuss the behavior of the ERRG. We put $p= c/n$ for some constant $c$, and write $\Ccal_j$ for the $j$th largest connected component of $G(n,p)$. It holds that when $c<1$ we have $|\Ccal_1| = \Theta(\log n)$ whp,\footnote{For a sequence of random variables $\{X_n\}$ and a function $f(n)$ we write $X_n  = \Theta(f(n))$ \emph{with high probability} (or whp) if there exist constants $C \ge c > 0$ such that $\lim_{n \to \infty}\P(c f(n) \le X_n \le C f(n)) = 1.$} while when $c>1$ we have that $|\Ccal_1| = \Theta(n)$ whp\cite{ErdRen60}.
Bollob\'as \cite{Boll82} was the first to study the delicate features of this transition that become apparent when, instead of keeping $c$ constant, we allow $c$ to depend on $n$ and let $c \to 1$ as $n \to \infty$. That and subsequent papers \cite{Boll82, Lucz90, JanKnuLucPit93,Aldo97} led to the following intricate picture:

Let $\vep_n =o(1)$ be a non-negative sequence. We can distinguish the following three regimes of the phase transition:
\begin{itemize}
	\item \emph{The slightly subcritical regime:} if \footnote{For two positive sequences $a_n$ and $b_n$ we write $a_n \gg b_n$ when $a_n/b_n \to \infty$.} $\vep_n \gg n^{-1/3}$ and $p =(1-\vep_n)/n$, we have for all fixed $j \ge1$ that
	\[
		\frac{|\Ccal_j|}{2 \vep_n^{-2} \log(\vep_n^3 n)} \stackrel{\P}{\longrightarrow} 1.
	\]
	\item \emph{The critical window:} if $\vep_n = a n^{-1/3}$ for some fixed $a\in\R$ and $p = (1 \pm \vep_n)/n$, we have for all fixed $j \ge 1$ that
	\[
		\left(\frac{|\Ccal_1|}{n^{2/3}}, \dots , \frac{|\Ccal_j|}{n^{2/3}}\right) \stackrel{\mathrm{d}}{\longrightarrow} (\chi_1, \dots, \chi_j ),
	\]
	for some sequence of random variables $(\chi_i)_{i=1}^j$ (that depends on the constant~$a$) supported on $[0,\infty)$.
	\item \emph{The slightly supercritical regime:} if $\vep_n \gg n^{-1/3}$ and $p =(1+ \vep_n)/n$, we have for $j \ge 2$,
	\[
		\frac{|\Ccal_1|}{2 \vep_n n} \stackrel{\P}{\longrightarrow} 1, \qquad \text{ and } \qquad
		\frac{|\Ccal_j|}{2 \vep_n^{-2} \log(\vep_n^3 n)} \stackrel{\P}{\longrightarrow} 1.
	\]
\end{itemize}

It has been shown so far that many of the features of the ERRG phase transition also hold for hypercube percolation. To state these results we first need to discuss the percolation threshold probability.
We write $V:=2^m$ for the number of vertices of $Q_m$ and $\Ccal(x)$ for the vertex set of the connected component of the vertex $x$, that is,
\[
	\Ccal(x) := \{y \in \{0,1\}^m \,:\, x \conn y\},
\]
where $\{x \conn y\}$ denotes the event that the vertices $x$ and $y$ are connected by a path of open edges in the percolation configuration (with the convention that $x \conn x$ for all $x$).\footnote{Below we will also use this notation to refer to the connected components of other graphs, and sometimes, we abuse notation and write $\Ccal(x)$ for the subgraph that is induced by this vertex set.} Since the hypercube is a transitive graph, the distribution of $\Ccal(x)$ as an unlabelled finite rooted graph is independent of our choice of $x$, so we will often consider $\Ccal(x)$ for some $x$ but simply write $\Ccal$.

We define the \emph{susceptibility} $\chi(p): = \Ep[|\Ccal|]$ and the \emph{critical parameter} $p_c =p_c(Q_m) \in [0,1]$ as the unique solution to
\begin{equation}\label{e:chidef}
	\chi(p_c) = \lambda V^{1/3}
\end{equation}
for some fixed $\lambda \in (0,\infty)$. There is some freedom in the choice of $\lambda$, see \cite{BorChaHofSlaSpe05a} for a detailed explanation. We can make sense of this definition via comparison with the ERRG, where $\chi((1 \pm \vep)/n) = \Theta(n^{1/3})$ if and only if $\vep = O(n^{-1/3})$ \cite{Boll82,Lucz90}.

Although the state of affairs for hypercube percolation is not nearly as complete as that of the ERRG, a rather thorough investigation is performed in \cite{AjtKomSze82,BolKohLuc92,BorChaHofSlaSpe05a,BorChaHofSlaSpe05b,BorChaHofSlaSpe06,HofSla06,HofNac12,HofNac14}. It is established there that there exists a critical window of width $O(V^{-1/3})$ around $p_c$ in which the largest components are of order $V^{2/3}$ and their distribution is not concentrated. The value of $p_c$ has been estimated using the lace expansion \cite{HofSla06} to be
\begin{equation}\label{e:pcestimate}
p_c(Q_m) = {1 \over m-1} + O(m^{-3})
\end{equation}
(see also \cite{HofNac14} for an elementary proof).

\subsection{The maximal volume of clusters in the slightly subcritical regime}
The first result in this regime is due to Bollob\'as, Kohayakawa, and {\L}uczak \cite{BolKohLuc92}, who showed that for percolation on $Q_m$ with $p = p_c(1-\vep_m)$,
\[
		\frac{|\Ccal_1|}{2 \vep_m^{-2} \log(\vep_m^3 V)} \stackrel{\P}{\longrightarrow} 1 \quad \text{ if } \quad \vep_m \ge \frac{(\log\log V)^2}{\sqrt{\log V} \log \log \log V}.
\]
Note that this constraint on $\vep_m$ is very far from $\vep_m \gg V^{-1/3}$, so this bound does not hold all the way up to the critical window.
The second bound, due to Borgs \emph{et al.} \cite{BorChaHofSlaSpe05a, BorChaHofSlaSpe05b} is that for any fixed $\delta>0$,
\begin{equation}\label{e:largeclusterupper}
	\Pp\left(\frac{\vep_m^{-2}}{3600} \le |\Ccal_1| \le (2+\delta) \vep_m^{-2} \log(\vep_m^3 V)\right) = 1-o(1) \quad \text{ if } \quad \vep_m \gg V^{-1/3}.
\end{equation}
So their bound works all the way up to the critical window, but the lower bound is not of the same order as the upper bound. The first result of the current paper is to prove the correct order of magnitude for the largest component throughout the entire slightly subcritical regime.

\begin{theorem}\label{thm:numberlarge}
Consider percolation on $Q_m$ with $p = p_c (1-\vep_m)$ where $(\vep_m)$ is a non-negative sequence with $\vep_m \to 0$ and $\vep_m^3 V \to \infty$ as $m \to \infty$. Then, for any fixed $\alpha < 1$ and any $(j_m)$ such that $j_m \in [1, (\vep_m^3 V)^\alpha]$ we have
\[
	|\Ccal_{j_m}|  = \Theta\left(\vep_m^{-2} \log(\vep_m^3 V)\right) \text{ with high probability}.
\]
\end{theorem}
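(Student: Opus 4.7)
The upper bound $|\Ccal_{j_m}| \le |\Ccal_1|$ combined with \eqref{e:largeclusterupper} gives $|\Ccal_{j_m}| = O(\vep_m^{-2}\log(\vep_m^3 V))$ immediately, so the real task is the matching lower bound. I would proceed by a first/second moment argument applied to the counting random variable
\[
Z_k \;:=\; \#\{x \in Q_m : |\Ccal(x)| \ge k\} \;=\; \sum_{i \,:\, |\Ccal_i|\ge k} |\Ccal_i|,
\]
evaluated at $k = k_m := c_0\,\vep_m^{-2}\log(\vep_m^3 V)$ for a sufficiently small absolute constant $c_0 > 0$.

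The cornerstone of the argument is a sharp subcritical lower-tail estimate of the form
\[
\Pp\bigl(|\Ccal(x)| \ge k\bigr) \;\ge\; \frac{c}{\sqrt{k}}\,\exp\!\bigl(-C\,\vep_m^2\,k\bigr), \qquad k \lesssim \vep_m^{-2}\log(\vep_m^3 V),
\]
mirroring the branching-process tail that drives the ERRG analysis. On the hypercube I would derive this from the triangle condition together with the mean-field estimates on the susceptibility $\chi(p)$ and on the two-point function $\tau_p(x,y) = \Pp(x \conn y)$, by comparing the cluster exploration with a slightly subcritical branching random walk; this comparison should remain valid as long as $k\vep_m^2 \ll \log(\vep_m^3 V)$. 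I expect this to be the hardest step: the exponential rate $\vep_m^2 k$ must be sharp, because the second-moment argument below absorbs any slack as a polynomial loss in $\vep_m^3 V$, and so the tail must remain accurate all the way up to the edge of the critical window.

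Granted the tail bound, for $c_0$ small enough one obtains
\[
\Ep[Z_{k_m}] \;\ge\; c \,V\, k_m^{-1/2}\,(\vep_m^3 V)^{-C c_0} \;\gg\; j_m\cdot \vep_m^{-2}\log(\vep_m^3 V),
\]
using $j_m \le (\vep_m^3 V)^\alpha$ and choosing $c_0$ so that $Cc_0 < 1 - \alpha$. For the variance I would split $\Ep[Z_k^2] = \sum_{x,y} \Pp(|\Ccal(x)|\ge k,\,|\Ccal(y)|\ge k)$ according to whether $x\conn y$: the connected part is bounded by $\sum_{x,y}\tau_p(x,y) = V\chi(p)$, while the disconnected part is bounded by $\Ep[Z_k]^2$ via the BK inequality applied to the two (disjointly occurring) events $\{|\Ccal(x)|\ge k\}$ and $\{|\Ccal(y)|\ge k\}$. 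Since $\chi(p) = O(\vep_m^{-1})$ in the slightly subcritical regime, $\Var(Z_{k_m}) = O(V\vep_m^{-1}) \ll \Ep[Z_{k_m}]^2$ whenever $\vep_m^3 V \to \infty$, and Chebyshev yields $Z_{k_m} \ge \tfrac12 \Ep[Z_{k_m}]$ with high probability.

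To conclude, if fewer than $j_m$ clusters attain size $k_m$ then by \eqref{e:largeclusterupper}
\[
Z_{k_m} \;\le\; (j_m-1)\,|\Ccal_1| \;\le\; j_m\cdot(2+\delta)\,\vep_m^{-2}\log(\vep_m^3 V),
\]
which for $c_0$ small enough contradicts $Z_{k_m} \ge \tfrac12 \Ep[Z_{k_m}]$. Hence $|\Ccal_{j_m}| \ge k_m$ w.h.p., completing the lower bound. The main obstacle throughout is the sharp subcritical lower tail uniformly up to the edge of the critical window; the moment calculations surrounding it are standard once the tail is available.
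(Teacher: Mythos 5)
Your skeleton---upper bound from \eqref{e:largeclusterupper}, first and second moment analysis of $Z_k = \#\{x : |\Ccal(x)|\ge k\}$, variance bound $\Var(Z_k)\le V\chi(p)$, and the pigeonhole step to pass from $Z_k$ to $|\Ccal_{j_m}|$---is exactly the architecture of the paper's proof of Theorem~\ref{thm:general1}(a), and the variance argument and pigeonhole are correct (the variance bound is \cite[Lemma 7.1]{BorChaHofSlaSpe05a}, which you essentially rederive). The entire argument hinges, however, on the subcritical lower-tail estimate $\Pp(|\Ccal|\ge k)\gtrsim \mathrm{poly}(k,\vep)\cdot e^{-C\vep^2 k}$ for $k$ up to order $\vep^{-2}\log(\vep^3 V)$, and here you have a genuine gap: you state the bound and remark that you ``would derive this \dots by comparing the cluster exploration with a slightly subcritical branching random walk,'' but offer no argument, and in this regime no such comparison is known to give the exponential tail. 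The paper does \emph{not} obtain the tail by a direct volume/branching-process comparison. It instead conditions on the one-arm event $\{\partial B(A\vep^{-1})\neq\emptyset\}$ and shows (Lemmas~\ref{lem:firstmom} and~\ref{lem:secmom}) that, conditionally on having large intrinsic diameter, the cluster volume has first moment $\gtrsim A\vep^{-2}$ and bounded second moment; combined with the one-arm lower bound of Lemma~\ref{lem:subdiam}(a) this yields Theorem~\ref{thm:general1}(b), which is the tail bound you need.

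This detour through the diameter is not a cosmetic choice. As the authors explain in Section~\ref{sec:about}, the triangle condition gives sharp control on the event that a cluster has large diameter (via \eqref{e:critonearm}, \eqref{e:smallCdiam}, etc.) but \emph{not} directly on the event that its volume is large; this is precisely the obstruction your sketch glosses over, and it is also why the paper cannot recover the sharp constant $2$ in $|\Ccal_1|\sim 2\vep^{-2}\log(\vep^3 V)$. If you could make a rigorous branching-random-walk comparison with the correct exponential rate all the way up to the critical window, you would in fact be resolving the open problem discussed after Theorem~\ref{thm:lowertail}. So the missing step is not a routine lemma you can defer: it is where the actual work of the proof lies, and the approach you sketch for it is not the one that is known to work. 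To repair the proposal, replace the branching-walk heuristic with the diameter-conditioning argument: use \eqref{e:smallCdiam} to show $\Pp(|\Ccal|\le \alpha A\vep^{-2},\,\partial B(A\vep^{-1})\ne\emptyset)$ is a small fraction of $\Pp(\partial B(A\vep^{-1})\ne\emptyset)$, bound $\Ep[|\Ccal|^2\mid \partial B(r)\ne\emptyset]$ by a BKR tree-graph decomposition anchored on the witnessing geodesic, and combine with Lemma~\ref{lem:subdiam}(a).
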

The upper bound is an immediate consequence of \eqref{e:largeclusterupper} and in Section \ref{sec:mainproof} we provide the corresponding lower bound. The proof of Theorem \ref{thm:numberlarge} relies on the following lower bound on the tail of $|\Ccal|$.
\begin{theorem}\label{thm:lowertail} Consider percolation on $Q_m$ with $p = p_c(1-\vep_m)$ where $(\vep_m)$ is a non-negative sequence with $\vep_m \to 0$ and $\vep_m^3 V \to \infty$ as $m \to \infty$. Then there exists constants $c, c_a>0$ such that for any $A \in [1, c\vep V_m^{1/3}]$,
\begin{equation}\label{e:lowertail}
	\Pp \big(|\Ccal| \ge A \vep_m^{-2} \big) \ge \frac{c \vep_m}{A} \e^{-c_a A}.
\end{equation}	
\end{theorem}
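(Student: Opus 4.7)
The plan is to deduce the lower tail estimate from the near-critical lower bound $\Ppc(|\Ccal| \ge k) \ge c/\sqrt{k}$ (valid for $k \le c_0 V^{2/3}$, which follows from the triangle condition for the hypercube and the Aizenman--Newman style arguments, see \cite{BorChaHofSlaSpe05a,BorChaHofSlaSpe05b,HofNac12}) together with a Radon--Nikodym change of measure from $\Ppc$ to $\Pp$. Under the range $A \le c\vep_m V^{1/3}$ imposed in the statement, together with $\vep_m \ge V^{-1/3}$ (which is implied by $\vep_m^3 V \to \infty$), one checks that $k := A\vep_m^{-2} \le c_0 V^{2/3}$, so the critical lower bound applies and yields $\Ppc(|\Ccal| \ge k) \ge c\vep_m/\sqrt{A}$.

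The central computation is the Radon--Nikodym derivative. For any set $C \ni 0$ of size $n$, let $f = |E(Q_m[C])|$ denote the number of hypercube edges with both endpoints in $C$. On the event $\{\Ccal(0) = C\}$, a configuration with $e$ open internal edges (and all $mn - 2f$ boundary edges of $C$ closed) satisfies
\[
\frac{d\Pp}{d\Ppc} = (1-\vep_m)^e \Bigl(1 + \frac{p_c \vep_m}{1 - p_c}\Bigr)^{mn - f - e}.
\]
Using $p_c = 1/(m-1) + O(m^{-3})$ from \eqref{e:pcestimate}, Taylor expansion to second order in $\vep_m$ shows that on the ``good'' event $G = \{e \le C_1 n,\ f \le C_1 n\}$ the linear-in-$\vep_m$ terms nearly cancel and the derivative is at least $\e^{-c\vep_m^2 n}$; this reflects the fact that $p_c$ is the critical point, so changing $p$ incurs only second-order cost on clusters with $O(n)$ internal edges.

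Restricting to clusters of size in $[k,2k]$ with $k = A\vep_m^{-2}$ and applying this bound on $G$ gives
\[
\Pp(|\Ccal| \ge k) \ge \e^{-c_a A}\, \Ppc\bigl(|\Ccal| \in [k,2k],\ G\bigr).
\]
Combining the matching critical upper and lower tail bounds yields $\Ppc(|\Ccal| \in [k,2k]) \ge c/\sqrt{k}$, so what remains is to show that typical critical clusters of size $\sim k$ lie in $G$. This ``tree-like'' property of high-dimensional critical clusters --- namely, that the number of open and potential hypercube edges inside a cluster of size $n$ is $O(n)$ with uniformly positive probability --- is the main obstacle, and I expect to establish it via BK-inequality bounds on internal cycle counts combined with the finite triangle diagram at $p_c$. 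Once this is in hand, the pieces assemble to $\Pp(|\Ccal| \ge A\vep_m^{-2}) \ge c'' \vep_m \e^{-c_a A}/\sqrt{A}$, which is at least $c'' \vep_m \e^{-c_a A}/A$ since $A \ge 1$.
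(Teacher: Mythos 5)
Your approach is genuinely different from the paper's. The paper never touches the internal structure of the cluster: it conditions on a one-arm event at scale $r=A\vep^{-1}$, obtains a first-moment lower bound $\Ep[|\Ccal|\mid\partial B(r)\neq\emptyset]\ge cA\vep^{-2}$ by ruling out small clusters with long arms (the Nachmias--Peres estimate), a second-moment upper bound $\Ep[|\Ccal|^2\mid\partial B(r)\neq\emptyset]\le CA^2\vep^{-4}$ via a BKR decomposition along the first shortest open path, and then concludes with a standard second-moment argument multiplied by the one-arm probability $\Pp(\partial B(r)\neq\emptyset)\ge c\vep A^{-1}(1-\vep)^{A\vep^{-1}}$. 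Your change-of-measure route is an interesting alternative, and the Radon--Nikodym formula and the near-critical cancellation mechanism you identify are correct at the level of the algebra.

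However, two substantive issues remain. First, the ``good'' event as written, $G=\{e\le C_1n,\ f\le C_1n\}$, is far too weak for the cancellation you invoke. Writing $e=n-1+a$, $f=n-1+b$ with excesses $a\ge0$, $b\ge a$, the linear-in-$\vep$ coefficient of $\log(d\Pp/d\Ppc)$ is $-a-(a+b)q+1+O(n/m^2)$ where $q=p_c/(1-p_c)\approx 1/(m-2)$; so for $a$ of order $n$ (allowed by $G$ when $C_1>1$) the linear term is of order $-n\vep\asymp -A\vep^{-1}$, exponentially worse than the target $-c_aA$. The cancellation actually requires $a\lesssim\vep n$, i.e.\ essentially bounded or at most $O(A/\vep)$ excess --- a much tighter constraint than $e\le C_1n$, and likewise for $f-e$. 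There is also the residual $\vep n\cdot((m-1)p_c-1)/(1-p_c)=O(\vep n/m^2)$, which when $\vep\ll m^{-2}$ (a regime allowed by $\vep^3V\to\infty$) can dominate the desired $\vep^2n$ unless one additionally knows that $(m-1)p_c>1$; this is true for the hypercube but does not follow from \eqref{e:pcestimate} alone. Second, and as you acknowledge, even with the corrected good event you must show it has uniformly positive probability given $|\Ccal|\in[k,2k]$, uniformly over the full range of $k$ up to $c\vep^{-1}V^{1/3}$; this geometric control (bounded excess and bounded number of closed internal edges) is a nontrivial input that the paper's argument entirely sidesteps. As it stands, the proposal therefore has a real gap in how the good event is set up as well as in the deferred ``main obstacle.''
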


\begin{rk}\emph{We believe that $|\Ccal_1|=(2+o(1)) \vep_m^{-2} \log(\vep_m^3 V))$ whp, as expected from the comparison with ERRG (and from \cite[Conjecture 3.2]{BorChaHofSlaSpe06}). This would follow if the constant $c_a$ in the latter theorem could be taken to be $1/2-o(1)$, but we are unable to prove this. See Section \ref{sec:about} for further discussion.}
\end{rk}


\subsection{The maximal diameter, one-arm probability, and mixing time} The \emph{diameter} of a finite connected graph is the largest graph distance between any two vertices. We define the maximal diameter $\dmax$ as the largest diameter among all the connected components.
In \cite{Lucz98} {\L}uczak shows that the slightly subcritical ERRG, that is, $G(n,p)$ with $p=(1-\vep_n)/n$ where $\vep_n = o(1)$ but $\vep_n \gg n^{-1/3}$, satisfies
 \[
 	\dmax = (1 \pm o(1)) \vep_n^{-1} \log(\vep_n^3 n) \, \qquad \text{with high probability.}
\]
Our second result in this paper is the analogous statement for the hypercube.
\begin{theorem}\label{thm:maxdiam} Consider percolation on $Q_m$ with $p = p_c (1-\vep_m)$ where $(\vep_m)$ is a non-negative sequence with $\vep_m \to 0$ and $\vep_m^3 V \to \infty$ as $m \to \infty$. Then
\[
	\dmax = (1 \pm  o(1))(\vep_m^{-1} \log(\vep_m^3 V)) \text{  with high probability.}
\]
\end{theorem}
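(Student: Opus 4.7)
Write $L = \log(\vep_m^3 V)$ and $r^* = \vep_m^{-1} L$; note $L \to \infty$ by assumption. The proof couples a first-moment upper bound with a second-moment lower bound.

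\emph{Upper bound.} Fix $\delta > 0$ and set $r = (1+\delta) r^*$. Since each cluster is counted once per vertex it contains, Markov's inequality gives
\begin{equation*}
\Pp(\dmax \ge r) \le \Ep\!\left[\#\{\Ccal : \diam(\Ccal) \ge r\}\right] = V \, \Ep\!\left[\frac{\1_{\diam(\Ccal(0))\ge r}}{|\Ccal(0)|}\right].
\end{equation*}
The plan is to bound the right-hand side by splitting over $|\Ccal(0)|$ using a joint estimate on cluster size and diameter of the mean-field form
\begin{equation*}
\Pp\!\left(|\Ccal(0)| = k,\ \diam(\Ccal(0)) \ge r\right) \lesssim C k^{-3/2} e^{-k\vep_m^2/2} e^{-r^2/(2k)},
\end{equation*}
valid for $k$ in the slightly subcritical regime. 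This is consistent with the mean-field picture (a subcritical Galton--Watson tree conditioned on size $k$ has diameter of order $\sqrt{k}$ with Gaussian tails), and would be derived from the sharpness results of \cite{BorChaHofSlaSpe05a, BorChaHofSlaSpe05b} together with an intrinsic one-arm estimate $\Pp(0 \conn \partial B^{\mathrm{intr}}(s)) \le C(1-\vep_m)^s$, obtained from a union bound over the self-avoiding walks of length $s$ in $Q_m$ and the identity $(m-1)p = 1-\vep_m + O(m^{-2})$ coming from \eqref{e:pcestimate}. Integrating $1/k$ against this joint density is a saddle-point computation whose saddle lives at $k^* \sim r/\vep_m$ and produces $\Ep[\1_{\diam\ge r}/|\Ccal|] \le C\vep_m r^{-2} e^{-r\vep_m}$. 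At $r = (1+\delta) r^*$ this yields $V\cdot C\vep_m r^{-2}(\vep_m^3 V)^{-(1+\delta)} = O((\vep_m^3 V)^{-\delta}/L^2) = o(1)$.

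\emph{Lower bound.} Fix $\delta > 0$ and let $r = (1-\delta) r^*$. I would apply a second-moment argument to
\begin{equation*}
X = |\{(u, v) \in Q_m \times Q_m : u \conn v \text{ and } d_{\Ccal(u)}(u, v) \ge r\}|.
\end{equation*}
By transitivity, $\Ep[X] = V \Ep[|\{v : d_{\Ccal(0)}(0, v) \ge r\}|]$, and a lower bound on the inner expectation is obtained by summing $p^r$ over self-avoiding walks of length $r$ from $0$ that are the unique open connection to their endpoint. Under the triangle condition a positive fraction of such walks have this property, so $\Ep[X] \ge c V(1-\vep_m)^r \gtrsim (\vep_m^3 V)^{\delta}\vep_m^{-3(1-\delta)} \to \infty$. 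The variance $\Ep[X^2]$ splits by whether the two pairs share sites: the vertex-disjoint contribution factors as $(1+o(1))\Ep[X]^2$, while the intersecting contribution is controlled by the triangle condition, exactly as in the second-moment argument behind Theorem \ref{thm:lowertail}. Hence $X \ge 1$ whp, so $\dmax \ge r$ whp.

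\emph{Main obstacle.} The crux is obtaining the sharp constant $1$ in the upper bound throughout the entire slightly subcritical regime, particularly near the critical window $\vep_m \sim V^{-1/3}$. A naive first moment using only the one-arm estimate, via $\diam(\Ccal(0))\ge r \Rightarrow 0 \conn \partial B^{\mathrm{intr}}(r/2)$, loses a factor of $2$ in the exponent and delivers only $\dmax \le (2+o(1)) r^*$. Recovering the sharp prefactor $1+o(1)$ requires the joint density estimate on cluster size and diameter above, and the ensuing saddle-point argument; this is where the analysis on $Q_m$ substantially diverges from the ERRG case handled by \cite{Lucz98} and where one must open the black box of subcritical geometry using the sharpness estimates and Theorems \ref{thm:numberlarge}--\ref{thm:lowertail}. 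The lower bound is more standard and largely parallels the second-moment arguments behind Theorem \ref{thm:lowertail}.
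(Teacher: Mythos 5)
Your high-level architecture (first-moment upper bound with a joint size-diameter estimate, second-moment lower bound) is in the right spirit, and you correctly identify that the sharp prefactor $1$ in the upper bound is the real difficulty. However, there are serious gaps at the points that actually carry the weight of the argument.

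\emph{The ``intrinsic one-arm estimate from a union bound'' is not available.} You propose to obtain $\Pp(\partial B(s)\neq\emptyset)\le C(1-\vep_m)^s$ by summing $p^s$ over self-avoiding walks of length $s$. Since $p(m-1)=(1-\vep_m)(1+O(m^{-2}))$, the union bound gives $(1-\vep_m)^s(1+O(m^{-2}))^s$, and the correction factor $\exp(O(s/m^2))$ is enormous in the relevant range: with $\vep_m\gg V^{-1/3}$ one has $s\asymp\vep_m^{-1}\log(\vep_m^3 V)$ which can be as large as $2^{m/3}m$, vastly exceeding $m^2$. The bound you want is precisely the content of Theorems~\ref{thm:subvolume} and~\ref{thm:subdiam} in the paper, and those are proved by an intricate induction on $r$ using a two-layer coupling, BKR, and the uniform mixing-time inputs (Lemma~\ref{lem:unifconn}, Lemma~\ref{lem:heatkernelbound}); they do not follow from a naive path count, and that difficulty is the core of the paper.

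\emph{The joint density estimate is asserted, not derived, and it is sharper than anything the cited tools give.} Your proposed bound $\Pp(|\Ccal|=k,\ \diam\ge r)\lesssim k^{-3/2}e^{-k\vep_m^2/2}e^{-r^2/(2k)}$ with the mean-field Gaussian constant $1/2$ in the $e^{-r^2/(2k)}$ tail would make the saddle-point computation work, but no such estimate is available: the small-volume large-diameter bound \eqref{e:smallCdiam} only provides $2^{-r^2/((64C_2+2)k)}$ with a large unexplicit constant, which does not produce the exponent $-r\vep_m$ after the saddle. The paper circumvents this by never needing the sharp Gaussian constant: it uses Lemma~\ref{lem:diametervolume} (only the large-deviation form is required) in a multi-scale argument on a ladder of radii $r_k=\vep^{-1}+\vep^{-1}/\log^{(k+1)}(\vep^{-1})$, combined with the sharp one-arm bound of Theorem~\ref{thm:subdiam} applied to $r-2\vep^{-1}$, to reduce the count by a factor $\approx\vep^{-2}$. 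Your saddle-point route cannot be made rigorous with the available estimates.

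\emph{The lower-bound variance estimate glosses over the non-monotonicity of the one-arm event.} When you split $\Ep[X^2]$ into vertex-disjoint and intersecting pairs and claim the disjoint part ``factors as $(1+o(1))\Ep[X]^2$,'' you need to know that conditioning on $\Ccal(u)$ and then asking for the arm event from $v$ \emph{off} $\Ccal(u)$ does not increase the one-arm probability by more than a $(1+o(1))$ factor. Since $\{\partial B_v(r)\neq\emptyset\}$ is not monotone, this is not automatic: deleting edges can create new shortest paths of length exactly $r$. This is the content of Theorem~\ref{thm:subdiamoffA} in the paper and requires a separate, nontrivial argument (and requires the volume cap $|\Ccal(v)|\le5\vep^{-2}\log(\vep^3 V)$ in the definition of the counting variable, which your $X$ lacks, so that the set $A=\Ccal(u)$ you condition on is small enough for the estimate to hold). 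Simply saying ``exactly as behind Theorem~\ref{thm:lowertail}'' does not cover this, since the second moment there (Lemma~\ref{lem:secmom}) involves $|\Ccal|$ conditioned on an arm, not the product of two arm events for distinct vertices.
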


\begin{rk}\emph{Note that in the subcritical phase of the ERRG the largest cluster $\Cmax$ is \emph{not} the cluster with the largest diameter whp. Indeed, one can readily show that $\Cmax$ is a tree whp, and thus, if we condition on $|\Cmax|$, any tree with $|\Cmax|$ vertices has the same probability of being $\Cmax$. Since a uniformly chosen tree on $k$ vertices has diameter $\Theta(\sqrt{k})$ whp \cite{RenSze67}, we conclude that $\diam(\Cmax) = \Theta(\vep^{-1} \sqrt{\log(\vep^3 n)})$ whp, a factor $\sqrt{\log(\vep^3 n)}$ away from the maximal diameter. We expect this to hold in subcritical hypercube percolation but we were unable to prove this, see Section~\ref{sec:about} below.}
\end{rk}

The main ingredients in the proof of Theorem \ref{thm:maxdiam} are the following sharp bounds on the slightly subcritical boundary volume and one-arm probability:
\begin{theorem}\label{thm:Qmonearm}
Consider percolation on $Q_m$ with $p = p_c (1-\vep_m)$ where $(\vep_m)$ is a non-negative sequence with $\vep_m \to 0$ and $\vep_m^3 V \to \infty$ as $m \to \infty$. Then there exist $C,c>0$ such that for all integers $r = O(\vep_m^{-1} \log(\vep_m^3 V))$,
\[
	c (1-\vep_m)^r \le \Ep[|\partial B(r)|] \le C(1-\vep_m)^r,
\]
and
\[
	\frac{c}{r} (1-\vep_m)^r \le \Pp(\partial B(r) \neq \emptyset) \le \begin{cases} C r^{-1} & \text{ if } r \le \vep_m^{-1},\\
	C \vep_m (1-\vep_m)^r & \text{ if } r > \vep_m^{-1}.
	\end{cases}
\]
\end{theorem}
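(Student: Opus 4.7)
The proof follows the standard high-dimensional template: establish the required intrinsic-distance estimates by running the lace expansion for percolation on $Q_m$ at the slightly subcritical parameter $p = p_c(1-\vep_m)$, rather than just at $p_c$. The critical inputs I take for granted are (i) the triangle condition $\nabla_p = O(m^{-1})$ uniformly for $p \in [0, p_c]$, which underlies the expansion and is provided by \cite{BorChaHofSlaSpe05a, HofNac12}, and (ii) the critical boundary-volume estimate $\Epc[|\partial B(r)|] = \Theta(1)$ uniform in the relevant range of $r$, which is the high-dimensional analogue of Kesten's theorem for intrinsic distances.

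\textbf{Expected boundary volume.} The lace expansion for the intrinsic two-point function $\tau^{(r)}_p(x,y) := \Pp(d_p(x,y) = r)$ yields a generating-function identity of the schematic form
\begin{equation*}
G_p(z) := \sum_{r \ge 0} z^r \Ep[|\partial B(r)|] = \frac{1 + \Pi_p(z)}{1 - z\,(pm)\,[1 + \Pi_p(z)]},
\end{equation*}
with a remainder controlled by $\nabla_p$. Plugging in $pm = 1 - \vep_m + O(1/m)$ and extracting via a Tauberian theorem yields $\Ep[|\partial B(r)|] = \Theta((1-\vep_m)^r)$. An alternative, more elementary route for the upper bound alone starts from the thinning coupling $\omega_p \subseteq \omega_{p_c}$ (where each $p_c$-open edge is kept with probability $1 - \vep_m$): any open $p$-SAW of length $r$ from $0$ to $y$ lifts to an open $p_c$-SAW whose $r$ edges all survived the thinning, and BK-decoupling the $(1-\vep_m)^r$ survival factor from the $p_c$-cluster contribution gives $\Ep[|\partial B(r)|] \le C (1-\vep_m)^r\, \Epc[|\partial B(r)|]$.

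\textbf{One-arm probability.} Both bounds will follow from the identity
\begin{equation*}
\Pp(\partial B(r) \ne \emptyset) \;=\; \frac{\Ep[|\partial B(r)|]}{\Ep\!\left[|\partial B(r)| \,\middle|\, \partial B(r) \ne \emptyset\right]},
\end{equation*}
together with two-sided bounds $\Ep[|\partial B(r)| \mid \partial B(r) \ne \emptyset] \asymp \min(r, \vep_m^{-1})$; combining the matched asymptotic for $\Ep[|\partial B(r)|]$ with these yields exactly the stated $C/r$ upper bound for $r \le \vep_m^{-1}$, the $C \vep_m (1-\vep_m)^r$ upper bound for $r > \vep_m^{-1}$, and the unified $c(1-\vep_m)^r/r$ lower bound. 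For the lower bound on the conditional expectation I use a sprouting argument: conditional on $\partial B(r) \ne \emptyset$ and a geodesic $\gamma = (0=x_0,\ldots,x_r)$, each $x_i$ sprouts, in the edges disjoint from $\gamma$, an independent near-critical subtree whose expected size at depth $r-i$ is $\Theta((1-\vep_m)^{r-i})$; summing over $i$ gives $\Theta(\min(r, \vep_m^{-1}))$. The matching upper bound on the conditional expectation follows from a second-moment bound $\Ep[|\partial B(r)|^2] \le C \min(r, \vep_m^{-1})(1-\vep_m)^r$, proved by decomposing pairs $(y_1, y_2) \in \partial B(r)^2$ at their last common $p$-open ancestor $x_k$ and applying the expected-volume estimate to each of the three branches $0 \to x_k$, $x_k \to y_1$, $x_k \to y_2$.

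\textbf{Main obstacle.} The dominant technical hurdle is keeping the lace expansion effective uniformly in $p \in [p_c(1-\vep_m), p_c]$ and uniformly in $r$ up to $O(\vep_m^{-1} \log(\vep_m^3 V))$, a range that substantially exceeds the natural $V^{1/3}$ scale on which the critical inputs are usually established. Additionally, the sprouting and second-moment calculations rely on an "independent subtree" heuristic whose rigorous justification requires the triangle condition to bound interactions between branches; making these estimates tight enough to capture the correct $(1-\vep_m)^r$ factor without accumulating the $(1+O(1/m))^r$ corrections that arise from naive SAW counting (and which blow up as soon as $r \gg m^2$) is the most delicate quantitative point, and forces one to use the sharp high-dimensional two-point function throughout.
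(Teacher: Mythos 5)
The proposal takes a genuinely different route, but the central elementary step in it breaks on a non-monotonicity issue that the paper's proof is specifically engineered to circumvent. You write that ``any open $p$-SAW of length $r$ from $0$ to $y$ lifts to an open $p_c$-SAW whose $r$ edges all survived the thinning, and BK-decoupling the $(1-\vep_m)^r$ survival factor from the $p_c$-cluster contribution gives $\Ep[|\partial B(r)|] \le C (1-\vep_m)^r\, \Epc[|\partial B(r)|]$.'' This does not work as stated, because $\{y \in \partial B(r)\}$ (equivalently, $\{0 \stackrel{= r}{\lrfill} y\}$) is \emph{not} monotone increasing. If $y \in \partial B_p(r)$ and $\eta$ is the first $p$-open shortest path to $y$, then in the $p_c$-configuration the extra edges that were thinned away may create a \emph{shorter} $p_c$-open path to $y$, so $y$ need not lie in $\partial B_{p_c}(r)$; you only get $y \in B_{p_c}(r)$, which loses a factor $r$. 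Moreover, the witness set for ``$\eta$ is the first $p$-open shortest path'' necessarily includes closed edges, so the event cannot be BK-decoupled from the thinning survival factor by elementary BK; you would need the BKR inequality together with a careful decomposition of the bad event. This is precisely the content of Theorem~\ref{thm:subvolume} in the paper: it sets up the same two-level coupling (via a multigraph with parallel edges at probabilities $p_1$ and $q$ with $(1-q)(1-p_1)=1-p_c$), isolates the main term where the first $p_c$-shortest path also survives thinning (giving the clean $(1-\vep)^r \cdot C_b$ contribution), and then spends the entire proof bounding the complementary event where a $p_c$-shortcut exists, via an induction on $r$, the BKR inequality, the uniform-connection bound \eqref{e:unifconnprob}, and the non-backtracking heat-kernel estimate of Lemma~\ref{lem:heatkernelbound}.

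Your route for the one-arm probability --- the identity $\Pp(\partial B(r)\neq\emptyset) = \Ep[|\partial B(r)|] / \Ep[|\partial B(r)| \mid \partial B(r)\neq\emptyset]$ combined with two-sided bounds on the conditional expectation --- is also a different strategy from the paper's, which instead proves the upper bound on $\Pp(\partial B(r)\neq\emptyset)$ by a second independent induction (Theorem~\ref{thm:subdiam}) using the same coupling machinery, rather than by dividing the two expectations. Your sprouting and second-moment bounds are plausible in spirit and would give bounds of the right order, but both inherit the same non-monotonicity subtlety (conditioning on a $p$-geodesic or decomposing at a ``last common ancestor'' both involve the non-increasing event $\{0\stackrel{=r}{\lrfill}y\}$, so BKR with carefully constructed witness sets is again unavoidable), and neither is worked out. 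You also propose a lace-expansion generating-function route and candidly flag as the main obstacle the need for uniformity in $r$ up to $O(\vep^{-1}\log(\vep^3 V))$; the paper sidesteps this entirely by its inductive coupling argument, which only needs the critical inputs \eqref{e:critonearm} and \eqref{e:critlayer} and the non-backtracking walk assumptions, not a near-critical lace expansion. Finally, note that the paper's \emph{lower} bounds in this theorem are almost free: Lemma~\ref{lem:subdiam}(a) obtains them from the elementary monotone coupling $\E_{p_1}[|\partial B(r)|] \ge (p_1/p_2)^r \E_{p_2}[|\partial B(r)|]$ applied at $p_2 = p_c$, which requires no lace expansion and no conditional-expectation bounds; using your machinery for the lower bounds would be considerable overkill even if it could be made rigorous.
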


We next turn to analyzing the \emph{mixing time} of simple random walk on the clusters. Recall that the \emph{total variation distance} between two probability measures $\psi$ and $\phi$ on a finite set $\Vcal$ is defined as
\[
	\|\psi - \phi\|_{\sss {\rm TV}} := \sup_{S \subset \Vcal}|\psi(S) - \phi(S)| = \smallfrac12 \sum_{v \in \Vcal}|\psi(v) -\phi(v)|.
\]
Given a graph $G = (\Vcal, \Ecal)$, let $\pi(x)$ be the stationary distribution of the simple random walk on it, i.e., $\pi(x) = \deg(x) / (2|\Ecal|)$ and let $S_t$ be the lazy simple random walk on $G$ (i.e., a discrete time simple random walk that at each time step with probability $\smallfrac12$ stays put and otherwise jumps to a uniformly chosen neighbor). The \emph{mixing time} of the lazy random walk on $G$ is defined by
\begin{equation}\label{e:Tmixdef}
	T_{\sss {\rm mix}}(G) = T_{\sss {\rm mix}} \left(G; \smallfrac14\right) := \min \big\{ t \, : \, \max_{v \in \Vcal} \| \P(S_t \in \, \cdot \, \mid S_0 = v) - \pi\|_{\sss {\rm TV}} \le \smallfrac14\big\}
\end{equation}
(the choice of $\smallfrac14$ is standard and inessential, see \cite{LevPerWil09}). The mixing time thus describes the time at which the random walk's distribution first comes ``close'' to the stationary distribution in total variation. As usual, let $\vep_n$ be a non-negative sequence such that $\vep_n = o(1)$ and $\vep_n \gg n^{-1/3}$, and write $\Ccal^\star$ for the component of $G(n, (1-\vep_n)/n)$ with the largest mixing time. Ding, Lubetzky, and Peres \cite[Theorem 2]{DinLubPer12} proved that
\[
	T_{\sss {\rm mix}}(\Ccal^\star)= \Theta \big(\vep_n^{-3} \log (\vep_n^3 n)^2\big) \quad \text{ with high probability.}
\]
Our final result for the hypercube is almost the analogous statement.
\begin{theorem}\label{thm:mixing} Consider percolation on $Q_m$ with $p = p_c(1-\vep_m)$ where $(\vep_m)$ is a non-negative sequence with $\vep_m \to 0$ and $\vep_m^3 V \to \infty$ as $m \to \infty$. Let $\Ccal^\star$ be the component with the largest mixing time. Then there exist $C>0$ such that for any sequence $(\omega_m)$ with $\omega_m \to 0$ as $m \to \infty$,
\[
 	\omega_m \vep_m^{-3} \log (\vep_m^3 V)^2 \le T_{\sss {\rm mix}}(\Ccal^\star) \le C \vep_m^{-3} \log(\vep_m^3 V)^2 \quad \text{ with high probability.}
\]
\end{theorem}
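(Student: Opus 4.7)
Both bounds follow from combining Theorems~\ref{thm:numberlarge}, \ref{thm:maxdiam}, and \ref{thm:lowertail} with standard random-walk estimates.

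\emph{Upper bound.} For any finite connected graph $G$ the lazy simple random walk obeys the classical estimate $\Tmix(G) \le C \, |E(G)| \cdot \diam(G)$, which follows from the commute-time identity $C(u,v) = 2|E(G)| R(u,v) \le 2|E(G)|\, d(u,v)$ combined with the mixing/hitting-time comparison on reversible chains. Subcritical clusters on $Q_m$ are whp tree-like in the sense that $|E(\Ccal)| = (1+o(1))|\Ccal|$ for every component; this is a standard consequence of the triangle condition (cf.\ \cite{BorChaHofSlaSpe05a}) combined with a first-moment estimate on the number of surplus edges. Plugging the volume bound~\eqref{e:largeclusterupper} and the diameter bound of Theorem~\ref{thm:maxdiam} into the above inequality then yields $\Tmix(\Ccal^\star) \le C \vep_m^{-3}\log^2(\vep_m^3 V)$ whp.

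\emph{Lower bound.} The plan is to exhibit whp a cluster $\Ccal$ containing a geodesic of length $\ell = (1-o(1))\dmax$ and having volume $|\Ccal| \ge \omega_m \vep_m^{-2}\log(\vep_m^3 V)$. On such a cluster, a Poincar\'e inequality with test function $f(x) := $ index along the geodesic of the vertex nearest to $x$ yields a spectral-gap bound $\gamma(\Ccal) = O\bigl(1/(|\Ccal|\ell)\bigr)$: only the $\ell$ edges of the geodesic contribute to the Dirichlet form (giving $O(\ell/|\Ccal|)$), while $\mathrm{Var}_\pi(f) = \Omega(\ell^2)$ as long as the cluster mass is not concentrated near one endpoint of the geodesic. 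This gives $\Tmix(\Ccal) \ge c|\Ccal|\ell \ge c\omega_m \vep_m^{-3}\log^2(\vep_m^3 V)$ as required.

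To construct the desired cluster I would first apply a second-moment argument based on Theorem~\ref{thm:Qmonearm} to produce $(\vep_m^3 V)^{\Omega(1)}$ nearly independent vertices whose clusters each contain a geodesic of length $\ell$, using a ``grow-and-stop'' exploration in the spirit of \cite{HofNac12} to ensure disjointness. Theorem~\ref{thm:lowertail}, applied to subcritical bushes hanging from the vertices along such a geodesic, then shows that each such cluster has the required volume with probability bounded below by a positive constant, and a union/concentration argument across the many candidates whp produces at least one cluster satisfying both conditions. \emph{The main obstacle} is precisely this decoupling of volume from diameter: conditioning on the existence of a long geodesic biases the cluster volume distribution in a way that is simple to control on the ERRG but subtler on $Q_m$, where the two-point function depends non-trivially on the Hamming distance between its endpoints. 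The cleanest route appears to be to construct the geodesic step-by-step and attach subcritical bushes via the strong Markov property of percolation exploration, with the quantitative independence coming from the same techniques that drive the proofs of Theorems~\ref{thm:Qmonearm} and~\ref{thm:lowertail}; controlling self-intersections of the exploration is the main technical overhead.
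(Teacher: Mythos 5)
Your argument is essentially the paper's. You invoke $\Tmix(G) \le C\,|E(G)|\,\diam(G)$, which is exactly Lemma~\ref{lem:commute} (Corollary 4.2 of \cite{NacPer08}), and combine it with the volume and diameter bounds. The paper also observes that the volume bound on $|\Ccal|$ does not immediately bound $|E(\Ccal)|$, and proves Lemma~\ref{lem:dense} ($\Ecal_{\mathrm{max}} \le 9\vep^{-2}\log(\vep^3 V)$ whp) via a Chernoff bound for surplus edges conditioned on a spanning tree. You claim the stronger $|E(\Ccal)| = (1+o(1))|\Ccal|$; that is plausible under the triangle condition but more than is needed, and the paper settles for the cruder $\Ecal_{\mathrm{max}} = O(|\Cmax|)$, which suffices. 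So the upper-bound route matches, modulo the precise form of the edge-count lemma.

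\textbf{Lower bound.} Here your route is genuinely different from the paper's, and it has a real gap. The paper uses the conductance/bottleneck criterion of \cite[Lemma 5.4]{NacPer08} (Lemma~\ref{lem:Tmixlb}): it fixes scales $h \ll k \ll \rad \ll r$ and shows whp (via a second moment on the counting variable $L_r$, relying on Lemma~\ref{lem:diametervolume}, Theorem~\ref{thm:subdiam}, and Theorem~\ref{thm:subdiamoffA}) that some cluster has $|B_v(h)| \ge q$, is not $\ell$-lane rich for $(k,\rad)$, and has most edges outside $B_v(\rad)$. This produces a small ``lane'' bottleneck at a scale far below the full diameter $\ell$, and never needs the cluster mass to be spread out along a macroscopic geodesic. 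Your Poincar\'e approach, by contrast, requires $\mathrm{Var}_\pi(f) = \Omega(\ell^2)$ for the geodesic-index test function, and this is exactly where the plan breaks down: if the cluster mass is concentrated near one end of the geodesic — say $|\Ccal|$ vertices near $v$ with a bare path of length $\ell$ hanging off — then $\mathrm{Var}_\pi(f) = \Theta(\ell^3/|\Ccal|) \ll \ell^2$, and the Poincar\'e bound only gives $t_{\mathrm{rel}} \gtrsim \ell^2 \sim \vep^{-2}\log^2(\vep^3 V)$, a full factor of $\vep^{-1}$ short. You acknowledge this condition (``as long as the cluster mass is not concentrated near one endpoint''), but the argument you sketch (geodesic construction + independent bushes + Theorem~\ref{thm:lowertail}) does not establish it: conditioning on a long geodesic already biases the bush distribution, and you would need a genuine anti-concentration statement for the mass profile along the geodesic at the macroscopic scale $\ell$, which is not in the paper and is not obviously obtainable from the one-arm estimates. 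The merit of the paper's lane argument is precisely that it replaces this global balance requirement by two local conditions (volume $\ge q$ inside radius $h$, and most edges outside radius $\rad$) at scales $h, \rad \ll \ell$ that can be verified cheaply with Lemma~\ref{lem:diametervolume} and a first-moment ``lane'' count. Also note that your Dirichlet-form claim (``only the $\ell$ geodesic edges contribute'') is exactly true only on trees; for a general cluster with cycles, some off-geodesic edges can contribute, though this is a minor issue given the scarcity of surplus edges.

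\textbf{Summary.} Your upper bound matches the paper. Your lower bound is a genuinely different (Poincar\'e) strategy, but it has a gap at the variance estimate that the paper's conductance/lane argument is specifically designed to avoid, and filling that gap would likely require new estimates beyond what you cite.
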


\subsection{About the proofs.}\label{sec:about} Our proofs use the many tools and techniques developed in \cite{BorChaHofSlaSpe05a,HofNac12,KozNac09,NacPer08} to study the volume, diameter and mixing time of large clusters in critical percolation. The main new ingredients we develop in this paper, which can be seen as further developments to the aforementioned papers, are bounds on the moments of clusters conditioned on having large diameter (see Section~\ref{sec:mainproof}), sharp estimates for the one-arm event (Theorem \ref{thm:Qmonearm} and Theorems \ref{thm:subvolume} and \ref{thm:subdiam} below), and an estimate showing that the probability of a long arm event cannot be increased significantly by removing a small number of edges from the graph (Theorem \ref{thm:subdiamoffA}).

Our methods have an inherent limitation that prevents us from obtaining much sharper results on the volume of the largest cluster. In particular, we are unable to prove that the largest cluster is of size $(2+o(1))\vep^{-2} \log(\vep^3 V)$ whp. The limitation stems from the fact that in the subcritical phase there are many clusters of volume comparable to the largest one that exhibit many different geometries. The triangle condition \eqref{e:strongtriangle} below gives us a firm understanding of the event that the cluster has a large diameter, but less so on the event that its volume is large. Thus, the lower bound on $|\Ccal_j|$ obtained in Theorem~\ref{thm:numberlarge} is obtained by showing that clusters of large diameter (that is, diameter of order $\vep^{-1} \log(\vep^3 V)$) exist and that such clusters typically have large volume, that is, volume of order $\vep^{-2} \log(\vep^3 V)$. Unfortunately, the leading constant for the volume for such ``long'' clusters is strictly smaller than $2$. In fact, the largest cluster is expected to have much smaller diameter, i.e., of order $\vep^{-1} \sqrt{\log (\vep^3 V)}$, as in the ERRG case.

\subsection{General theorems}
Theorems \ref{thm:numberlarge}--\ref{thm:mixing} are stated for the hypercube $Q_m$, but the assertions there hold in various levels of generality. Theorems \ref{thm:numberlarge} and \ref{thm:lowertail} hold under the assumption of the triangle condition (see \cite{BarAiz91, BorChaHofSlaSpe05a} and \eqref{e:strongtriangle} below) and therefore hold, for instance, for the hypercube, finite tori $\Z_n^d$ with $d$ large but fixed, and to expander families of high degree and high girth. Theorem~\ref{thm:lowertail} even holds for \emph{infinite} graphs that satisfy the triangle condition of \cite{BarAiz91}. The bounds on the diameter, one-arm probability, and mixing time of Theorems \ref{thm:maxdiam}, \ref{thm:Qmonearm}, and \ref{thm:mixing} hold under the stronger assumptions of \cite[Theorem 1.3]{HofNac12}. We now describe these general conditions and state our most general theorems.

Given a graph $G$ and $p \in [0,1]$ we write $G_p$ for the random graph obtained from $G$ by performing bond-percolation on $G$ with parameter $p$ and denote by $\Pp$ this probability measure. We call the edges of $G_p$ \emph{open} and the edges not in $G_p$ \emph{closed}. For each vertex $x \in G$ we write $\Ccal(x)$ for the connected component of $x$ in $G_p$. Recall that we write $\chi(p) = \E_p[|\Ccal(x)]$, and that this quantity does not depend on our choice of $x$ when $G$ is transitive. For two vertices $x,y$ of $G$ we write $x \conn y$ for the event that there exists an open path in $G_p$ connecting $x$ to $y$.

In our general setting we are given a sequence of transitive graphs $(G_m)$ with vertex degree $m$ and the numbers $p_c(G_m)$ as defined in \eqref{e:chidef}. We write $V_m$ for the number of vertices in $G_m$. We are also given a sequence of nonnegative numbers $\vep_m$ satisfying $\vep_m=o(1)$ and $\vep_m^3 V_m \to \infty$. For ease of notation, we will often write $G$, $p_c$, $\vep$ and $V$ instead of $G_m$, $p_c(G_m)$, $\vep_m$ and $V_m$, respectively.

The \emph{triangle condition}, first defined in \cite{BarAiz91} and refined to the finite graph setting in \cite{BorChaHofSlaSpe05a}, is a certain condition on the sequence $(G_m)$ implying several results for the percolation phase transition. This is an extensively studied topic, see e.g.\ \cite{AizNew84, BarAiz91, HarSla90a, BorChaHofSlaSpe05a, BorChaHofSlaSpe05b, FitHof15, Kozm10, KozNac09, KozNac11, Scho01}.
We state here a useful variant of the triangle condition: the \emph{strong triangle condition} holds if there exists $C>0$ such that for any two vertices $x,y$ and any $p \leq p_c$ we have
\begin{equation}\label{e:strongtriangle}
	\sum_{u,v} \Pp(x \conn u) \Pp(u \conn v) \Pp(v \conn y)  = \mathbb{1}_{\{x=y\}} + C\big(\chi^3(p)/V + 1/m \big) \, .
\end{equation}
This condition has been verified for various classes of graphs, with the hypercube $Q_m$ and high-dimensional tori $\Z_n^d$ among them \cite{BorChaHofSlaSpe05b, HofNac14}. We now state our first result, generalizing Theorems \ref{thm:numberlarge} and \ref{thm:lowertail}.

\begin{theorem}\label{thm:general1}
Let $(G_m)$ be a sequence of finite transitive graphs satisfying the triangle condition \eqref{e:strongtriangle}. Consider percolation on $G_m$ with $p = p_c(1-\vep_m)$, where $(\vep_m)$ is a non-negative sequence with $\vep_m \to 0$ and $\vep_m^3 V_m \to \infty$ as $m \to \infty$.
Then the following assertions hold:
\begin{enumerate}
\item[(a)] For any fixed $\alpha < 1$ and any $(j_m)$ such that $j_m \in [1, (\vep_m^3 V_m)^\alpha]$ we have
\[
	|\Ccal_{j_m}|  = \Theta(\vep_m^{-2} \log(\vep_m^3 V_m)) \quad \text{ with high probability}.
\]
\item[(b)] There exist constants $c, c_a>0$ such that for any $A \in [1, c\vep_m V_m^{1/3}]$,
\begin{equation}
	\Pp(|\Ccal| \ge A \vep_m^{-2}) \ge \frac{c \vep_m}{A} \e^{-c_a A}.
\end{equation}	
\end{enumerate}
\end{theorem}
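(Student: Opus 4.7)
The plan is to prove part (b) first and then deduce part (a). Both parts lean on the triangle condition \eqref{e:strongtriangle}, which delivers the mean-field estimates $\chi(p)=\Theta(\vep^{-1})$ and $\Ep[|\Ccal|^2]=O(\vep^{-3})$, and underpins the general-graph version of Theorem \ref{thm:Qmonearm} (valid under \eqref{e:strongtriangle} together with the supplementary hypotheses of \cite{HofNac12}).

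For part (b), the idea is to factor the target $(c\vep/A)e^{-c_a A}$ as a one-arm probability times a conditional tail of the volume. Pick $r=\lfloor c_0 A/\vep\rfloor$ with $c_0$ small, so that $r>\vep^{-1}$ and $(1-\vep)^r=e^{-\Theta(A)}$. The general-graph version of Theorem \ref{thm:Qmonearm} then supplies
\[
  \Pp(\partial B(r)\neq\emptyset)\;\geq\; c\vep(1-\vep)^r\;\geq\; c\vep\,e^{-c_a A},
\]
which contributes the $c\vep\cdot e^{-c_a A}$ factor. It remains to show
\[
  \Pp\bigl(|\Ccal|\geq A\vep^{-2}\bigm|\partial B(r)\neq\emptyset\bigr)\geq c/A,
\]
which I would handle by a Paley--Zygmund estimate for $|\Ccal|$ under the conditional measure. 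The conditional expectation $\Ep[|\Ccal|\mid\partial B(r)\neq\emptyset]$ should be of order $r\cdot\chi = A\vep^{-2}$, reflecting the heuristic that each vertex on a geodesic from the root to $\partial B(r)$ carries a side-branch cluster of expected size $\chi\asymp\vep^{-1}$; this should be made rigorous by a diagrammatic expansion controlled by \eqref{e:strongtriangle}. A matching bound $\Ep[|\Ccal|^2\mid\partial B(r)\neq\emptyset]\leq C A^3\vep^{-4}$ on the conditional second moment, obtained by a similar but more delicate triangle-diagram argument, then completes the Paley--Zygmund step.

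For part (a), the upper bound on $|\Ccal_{j_m}|$ is immediate from the triangle-condition version of \eqref{e:largeclusterupper} from \cite{BorChaHofSlaSpe05a}. For the lower bound, fix $\beta\in(0,1-\alpha)$, set $k=A\vep^{-2}$ with $A=(\beta/c_a)\log(\vep^3V_m)$, and let $N_k$ denote the number of distinct clusters of size at least $k$. Since $V_m\Pp(|\Ccal|\geq k)=\Ep[\sum_C|C|\,\1_{|C|\geq k}]$ and $\max_C|C|\leq C\vep^{-2}\log(\vep^3V_m)$ whp, part (b) yields
\[
  \Ep[N_k]\;\geq\; \frac{cV_m\vep^2}{\log(\vep^3V_m)}\cdot\Pp(|\Ccal|\geq k)\;\geq\;\frac{c(\vep^3V_m)^{1-\beta}}{\log^2(\vep^3V_m)}\;\gg\;j_m.
\]
Concentration of $N_k$ then follows from a second-moment argument, in which distinct clusters are nearly independent thanks to the BK inequality plus a small triangle correction, giving $N_k\geq j_m$ whp.

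The principal obstacle is the conditional second-moment step in part (b): the unconditional moments are controlled cleanly by \eqref{e:strongtriangle}, but transferring this control to the law conditioned on $\{\partial B(r)\neq\emptyset\}$ at the critical scale $r\asymp A/\vep$ requires combining the one-arm asymptotics with diagrammatic estimates while tracking every $\vep$-dependent factor, so as not to lose either the exponential $e^{-c_a A}$ or the prefactor $c\vep/A$. A secondary hurdle, needed for part (a), is extending the upper tail bound $\max_C|C|\leq C\vep^{-2}\log(\vep^3V_m)$ whp from $Q_m$ to the general triangle-condition setting.
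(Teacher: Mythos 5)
The high-level architecture matches the paper's: part (b) via a one-arm lower bound combined with conditional first and second moments of $|\Ccal|$ and Paley--Zygmund, part (a) by counting large clusters and applying part (b). But there are two concrete gaps in the execution.

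\textbf{Part (b): the conditional first moment.} You propose to establish $\Ep[|\Ccal|\mid\partial B(r)\neq\emptyset]\gtrsim A\vep^{-2}$ by ``a diagrammatic expansion controlled by \eqref{e:strongtriangle}.'' This is the place where the argument actually has to be clever, and the sketch doesn't address the real difficulty: diagrammatic/BKR arguments under the triangle condition give \emph{upper} bounds (since BKR states $\Pp(A\circ B)\le\Pp(A)\Pp(B)$), while here you need a \emph{lower} bound on a conditional expectation. The paper's Lemma \ref{lem:firstmom} avoids diagrammatics entirely: it writes $\Ep[|\Ccal|\mid\dbr\neq\emptyset]\ge k\,\Pp(|\Ccal|\ge k\mid\dbr\neq\emptyset)$, and then shows the conditional probability is $\ge 1/2$ for $k=\alpha A\vep^{-2}$ with $\alpha$ small, by combining the Nachmias--Peres bound \eqref{e:smallCdiam} on $\Pp(|\Ccal|\le k,\ \dbr\neq\emptyset)$ with the one-arm lower bound of Lemma \ref{lem:subdiam}(a). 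That input, not a diagram, is what makes the lower bound work. (The conditional \emph{second} moment \emph{is} handled by a BKR/diagram argument in Lemma \ref{lem:secmom}, as you anticipate.) A separate but minor point: you quote the one-arm lower bound as $c\vep(1-\vep)^r$ and attribute it to ``the general-graph version of Theorem \ref{thm:Qmonearm}'' (i.e.\ Theorem \ref{thm:generalmaxdiam}(b)), which requires the stronger hypotheses \eqref{e:assump0}--\eqref{e:assump2}, not just \eqref{e:strongtriangle}; the paper explicitly warns that estimates requiring those assumptions are \emph{not} used to prove Theorem \ref{thm:general1}. The bound actually used is $\Pp(\dbr\neq\emptyset)\ge\frac{c_3}{r}(1-\vep)^r$ from Lemma \ref{lem:subdiam}(a), which does hold under \eqref{e:strongtriangle} alone and supplies the required $\vep/A$ prefactor.

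\textbf{Part (a): counting clusters vs.\ counting vertices.} You work with $N_k$, the number of clusters of size $\ge k$, and write
\[
\Ep[N_k]\ \ge\ \frac{cV_m\vep^2}{\log(\vep_m^3V_m)}\,\Pp(|\Ccal|\ge k)\,.
\]
This doesn't follow as stated: the bound $\max_C|C|\le C\vep^{-2}\log(\vep^3 V)$ is only a high-probability statement, so one cannot divide it through inside the expectation $\Ep\big[\sum_C|C|\1_{\{|C|\ge k\}}\big]=V\Pp(|\Ccal|\ge k)$. The same issue then propagates to the concentration step: you assert $N_k$ concentrates by ``BK plus a small triangle correction,'' but a variance bound for the \emph{number of clusters} is not in the cited literature and would need a separate proof. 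The paper sidesteps both difficulties by working with $Z_{\ge k}=\#\{x:|\Ccal(x)|\ge k\}$ (the number of \emph{vertices} in large clusters): $\Ep[Z_{\ge k}]=V\Pp(|\Ccal|\ge k)$ exactly, the variance bound $\Var_p(Z_{\ge k})\le V\chi(p)$ is already available from \cite[Lemma 7.1]{BorChaHofSlaSpe05a}, and one then passes from $Z_{\ge k}\ge sk$ to $|\Ccal_{\lfloor s/t\rfloor}|\ge k$ by pigeonhole on the event $\{|\Ccal_1|\le tk\}$, whose probability is controlled by \eqref{e:largeclusterupper}. Your concern about extending the upper bound on $|\Ccal_1|$ to the general triangle-condition setting is unfounded: \eqref{e:largeclusterupper} is proved in \cite{BorChaHofSlaSpe05a,BorChaHofSlaSpe05b} under the triangle condition, not just for $Q_m$.
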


\begin{rk}\emph{A version of Theorem~\ref{thm:general1}(b) also holds for percolation on infinite lattices when the dimension is sufficiently large. In particular, our proof can be modified to show the analogous result when the infinite-lattice version of the triangle condition given by
\[
	 \sum_{x,y \in G}  \P_{p_c}(0 \conn x)\P_{p_c}(x \conn y)\P_{p_c}(y \conn 0) < \infty \, .
\]
This has been confirmed, among others, for nearest-neighbor percolation on $\mathbb{Z}^d$ when $d \ge 11$ \cite{FitHof15}, for certain ``finite-range spread-out'' percolation models on $\mathbb{Z}^d$ when $d > 6$ \cite{HarSla90a}, and for percolation on certain non-amenable Cayley graphs~\cite{Scho01, Scho02}. In this setting one can follow our proof --with straightforward modifications-- to conclude that there exist $c', c'_a>0$ such that for percolation at $p = p_c(1-\vep)$ and all $A \ge 1$,
\begin{equation}
	\Pp(|\Ccal| \ge A \vep^{-2}) \ge \frac{c' \vep}{A} \e^{-c'_a A}.
\end{equation}
}
\end{rk}

We now present the general version of Theorems \ref{thm:maxdiam}, \ref{thm:Qmonearm}, and \ref{thm:mixing}. Given a graph $G$, the $t$-step \emph{non-backtracking random walk} on $G$ starting from a vertex $x$ is a uniform measure on all paths $(X_1, \ldots, X_t)$ in $G$ such that $X_1=x$ and $X_{i} \neq X_{i-2}$ for all $3 \leq i \leq t$
(so the walk never backtracks). For two vertices $x,y$ of $G$ we write $\p^t(x,y)$ for the probability that a $t$-step non-backtracking random walk starting at $x$ ends at $y$. Given a connected graph $G$ and $\alpha \in (0,1)$ we define the
\emph{uniform non-backtracking mixing time} as
\begin{equation}\label{e:mnotdef}
	 \mnot := \mnot(G;\alpha) := \min \Big \{ t : \max_{x,y} {\p^t(x,y) + \p^{t+1}(x,y) \over 2}  \leq (1+\alpha)V^{-1} \Big \} \, .
\end{equation}
The averaging between $\p^t$ and $\p^{t+1}$ is incorporated to admit bipartite graphs, such as the hypercube, to the general setting. Note that although $\tmix$ is superficially similar to $\Tmix$, they are different quantities.

For later reference, we remark that Fitzner and van der Hofstad \cite[Theorem 3.5]{FitHof13} show that on the hypercube,
 \begin{equation}\label{e:Qmmix}
 	\mnot(Q_m; \alpha_m) = O(m \log m)
\end{equation}
for any $\alpha_m=o(1)$ that is at least polynomial, that is, $\alpha_m^{-1} = O(m^{c})$ for some fixed $c>0$.

The assumptions we make on the sequence $(G_m)$ of transitive graphs are that there exists a sequence $(\alpha_m)$ with $\alpha_m = o(1)$ and $\alpha_m\geq m^{-1}$ such that
\begin{equation}\label{e:assump0}
\mnot(G_m; \alpha_m) = o(V^{1/3}/\log^2(V)) \, ,
\end{equation}
and that
\begin{equation} \label{e:assump1}
(p_c (m-1))^{\mnot(G_m; \alpha_m)} = 1+O(\alpha_m) \, ,
\end{equation}
and that
\begin{equation} \label{e:assump2}
\max_{x,y} \sum_{u,v} \sum_{\substack{t_1, t_2, t_3 : \\t_1+t_2+t_3 \geq3}}^{\mnot(G_m ;\alpha_m)} \p^{t_1}(x,u)\p^{t_2}(u,v)\p^{t_3}(v,y) = O(\alpha_m / \log V) \, .
\end{equation}

We now state the general versions of Theorems \ref{thm:maxdiam}, \ref{thm:Qmonearm}, and \ref{thm:mixing}.

\begin{theorem}\label{thm:generalmaxdiam}
Let $(G_m)$ be a sequence of transitive graphs satisfying \eqref{e:assump0}, \eqref{e:assump1} and \eqref{e:assump2}. Consider percolation on $G_m$ with $p = p_c(1-\vep_m)$, where $(\vep_m)$ is a non-negative sequence with $\vep_m=o(1)$ and $\vep_m^3 V_m \to \infty$.
Then the following assertions hold:
\begin{enumerate}
\item[(a)]
\[
	 \dmax = (1 \pm o(1))(\vep_m^{-1} \log(\vep_m^3 V_m)) \text{  with high probability.}
\]

\item[(b)] There exist $c,C>0$ such that for all integers $r = O(\vep_m^{-1} \log(\vep_m^3 V_m))$,
\[
	c (1-\vep_m)^r \le \Ep[|\partial B(r)|] \le C(1-\vep_m)^r,
\]
and
\[
	\frac{c}{r} (1-\vep_m)^r \le \Pp(\partial B(r) \neq \emptyset) \le \begin{cases} C r^{-1} & \text{ if } r \le \vep_m^{-1},\\
	C \vep_m (1-\vep_m)^r & \text{ if } r > \vep_m^{-1}.
	\end{cases}
\]
\item[(c)] Let $\Ccal^\star$ be the component with the largest mixing time. Then there exist $C>0$ such that for any sequence $(\omega_m)$ with $\omega_m \to 0$ as $m \to \infty$,
\[
 	\omega_m \vep_m^{-3} \log (\vep_m^3 V_m)^2 \le T_{\sss {\rm mix}}(\Ccal^\star) \le C \vep_m^{-3} \log(\vep_m^3 V_m)^2 \quad \text{ with high probability.}
\]
\end{enumerate}
\end{theorem}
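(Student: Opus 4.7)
The plan is to establish the three parts in the order (b), (a), (c), since each builds on the previous.

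\emph{Part (b).} Using the non-backtracking random walk (NBW) framework of \cite{HofNac12}, I would expand $|\partial B(r)|$ as a sum of indicators of open NBW paths of length $r$. Assumption \eqref{e:assump1} makes $(p_c(m-1))^r = 1 + O(\alpha_m)$ up to $r \asymp \mnot$, so $\Ep[|\partial B(r)|]$ is to leading order $(1-\vep_m)^r$, and \eqref{e:assump2} shows path-overlap corrections are negligible; the matching lower bound comes from a second moment. The critical-like one-arm bound $C/r$ for $r \le \vep_m^{-1}$ follows from \cite{KozNac09} under the triangle condition implied by \eqref{e:assump0}--\eqref{e:assump2}. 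For $r > \vep_m^{-1}$, the Markov bound $\Pp(\partial B(r)\neq\emptyset) \le \Ep[|\partial B(r)|] \le C(1-\vep_m)^r$ is slack by a factor $\vep_m^{-1}$, because conditional on $\partial B(r) \neq \emptyset$ the boundary has typical width $\vep_m^{-1}$, as in the corresponding Galton--Watson picture. To gain this factor I would use a pivotal-edge/sausage decomposition along a geodesic combined with a conditional second moment, effectively reducing the one-arm event at radius $r$ to one at radius $\le \vep_m^{-1}$ at cost $(1-\vep_m)^{r-\vep_m^{-1}}$. The lower bound $c(1-\vep_m)^r/r$ is Paley--Zygmund with the second moment bounded by the triangle condition.

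\emph{Part (a).} The lower bound uses a second moment on $N_r := \sum_v \mathbb{1}\{\partial B_v(r) \neq \emptyset\}$ at $r = (1-o(1))\vep_m^{-1}\log(\vep_m^3 V_m)$: the first moment is large by (b), and pairwise correlations reduce to triangle-condition sums. The upper bound is the delicate direction, because the naive union bound
\[
\Pp(\dmax \ge r) \le V_m \Pp(\partial B(r) \neq \emptyset) \le CV_m\vep_m(1-\vep_m)^r
\]
only yields $\dmax \le (1+o(1))\vep_m^{-1}\log(\vep_m V_m)$, strictly weaker than the target whenever $\log(1/\vep_m)$ is comparable to $\log(\vep_m^3 V_m)$. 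The correct approach counts clusters rather than vertices:
\[
\#\{\text{clusters } C : \diam(C) \ge r\} = \sum_v \frac{\mathbb{1}\{\diam(\Ccal(v)) \ge r\}}{|\Ccal(v)|},
\]
and the key new ingredient is that, conditional on $\diam(\Ccal(v)) \ge r$, the cluster satisfies $|\Ccal(v)| \gtrsim r\vep_m^{-1}$ with high probability. This improves the expected count to $\Ep[\#\{\text{clusters with diam} \ge r\}] \lesssim V_m\vep_m^2(1-\vep_m)^r/r$, which is $o(1)$ for $r = (1+o(1))\vep_m^{-1}\log(\vep_m^3 V_m)$. I would prove the conditional volume estimate by BFS-exploring from one end of a geodesic in $\Ccal(v)$ and summing the expected lateral mass at each level using (b), with errors bounded via \eqref{e:assump2}.

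\emph{Part (c).} For the upper bound I combine (a), Theorem~\ref{thm:general1}(a), and the general estimate $T_{\sss\mathrm{mix}}(G) = O(|V(G)|\cdot\diam(G))$ for graphs with $O(1)$ surplus edges (cf.\ \cite{NacPer08}); subcritical clusters have $O(1)$ surplus edges by the triangle condition, yielding $T_{\sss\mathrm{mix}}(\Ccal^\star) = O(\vep_m^{-3}\log^2(\vep_m^3 V_m))$. For the lower bound I exhibit a cluster with diameter $(1-o(1))\vep_m^{-1}\log(\vep_m^3 V_m)$ carrying enough lateral volume to guarantee mixing time $\Omega(|\Ccal|\cdot\diam(\Ccal)) = \Omega(\omega_m\vep_m^{-3}\log^2(\vep_m^3 V_m))$ via a standard conductance argument on trees. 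Such a cluster exists because the diameter-maximizer of (a) typically carries subcritical branches off its spine totalling volume of order $\diam(\Ccal)\cdot\vep_m^{-1}$.

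The hardest step is the conditional volume estimate in (a): making rigorous the claim that a subcritical cluster of large diameter also has large volume requires coupling the geodesic exploration with a slightly subcritical Galton--Watson $Q$-process, whose backbone-plus-excursion structure contributes an $\vep_m^{-1}$ volume per level, and the coupling errors must be absorbed into \eqref{e:assump2}. A secondary difficulty is identifying the right cluster for the lower bound in (c), since neither the cluster of largest volume (whose diameter is only $\vep_m^{-1}\sqrt{\log(\vep_m^3 V_m)}$) nor a bare long path suffices.
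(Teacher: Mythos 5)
Your proposal captures the broad strategy, but there are several concrete gaps, two of which are substantial.

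\emph{Part (b).} The key nontrivial step is gaining the factor $\vep_m$ over the Markov bound. Your sketch appeals to ``a pivotal-edge/sausage decomposition along a geodesic'' without an actual argument, and it is unclear how you would implement it so that the reduction to a critical one-arm at scale $\vep_m^{-1}$ is cost-free. The paper's proof is a delicate induction on the radius using a multigraph coupling: each edge of $G$ is replaced by a parallel pair with probabilities $p_1 = p_c(1-\vep)$ and $q$ so that $p_1, q$ together build $p_c$; one then bounds the probability that the $p_1$-shortest path differs from the $p_c$-shortest path by a careful BKR decomposition, with four cases depending on whether a ``shortcut'' is shorter or longer than $\tmix$, and whether the remaining radius exceeds $\vep^{-1}$. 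This induction yields the clean form $\Pp(\partial B(r)\neq\emptyset) \le (1-\vep)^r\Ppc(\partial B(r)\neq\emptyset) + o(\vep(1-\vep)^r)$, after which the critical bound $C/r$ finishes. Your sketch has the right target but no proof of the key inequality.

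\emph{Part (a) upper bound.} This is the central gap. You correctly identify that one must count clusters rather than vertices, and the factor you need is that a long cluster has volume $\gtrsim r\vep^{-1}$. However, your proposed proof of this --- ``BFS-exploring from one end of a geodesic and summing the expected lateral mass at each level'' --- is a \emph{first-moment} computation; the paper's Lemma~\ref{lem:firstmom} already gives exactly this, and as the paper explains, a second moment (Lemma~\ref{lem:secmom}) only yields that the volume is large with \emph{constant} conditional probability, which is not enough for the union bound. What is needed is exponential concentration of $|B(\alpha\vep^{-1})|$ given $\partial B(r)\neq\emptyset$, which the paper proves via the ``thin levels'' argument (Lemma~\ref{lem:diametervolume}, a refinement of \cite{NacPer08}). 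Even that concentration is insufficient by a factor $\log(\vep^{-1})$, which is why the paper then runs a multi-scale argument over $N$ iterated-log scales; when $\log(\vep^{-1})$ is not $o((\vep^3 V)^{\delta/2})$ the single-scale argument simply fails. Your proposal does not register this obstruction. Separately, for the second-moment lower bound on $D_r$ you say correlations ``reduce to triangle-condition sums'', but the event $\{\partial B_v(r)\neq\emptyset\}$ is \emph{not} monotone, so conditioning on another cluster occupying some edges can a priori increase the one-arm probability; the paper needs a dedicated theorem (Theorem~\ref{thm:subdiamoffA}) to control $\Pp(\partial B(r)\neq\emptyset \off A)$, and without that your second-moment argument does not close.

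\emph{Part (c).} The upper bound via $T_{\mathrm{mix}} = O(|\Ecal|\cdot\diam)$ matches the paper, though your route ``clusters have $O(1)$ surplus edges by the triangle condition'' is an unnecessary (and not obviously easy) detour; the paper instead stochastically dominates $|\Ecal_x|-|\Ccal(x)|$ by a binomial and uses Chernoff, which yields $|\Ecal| = O(|\Ccal|)$ directly. The lower bound is where your sketch is too coarse: ``a standard conductance argument on trees'' does not apply, since the cluster is not a tree and shortcuts across the geodesic could collapse the effective resistance. The paper uses the ``lanes'' machinery of \cite[Lemma~5.4]{NacPer08}, which gives a mixing-time lower bound precisely when there are few lanes (little short-circuiting), and verifying its hypotheses whp occupies Lemma~\ref{lem:A123}, whose proof reuses the multigraph-coupling/BKR arguments and Theorem~\ref{thm:subdiamoffA}. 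Identifying the right cluster is indeed a difficulty, as you note, and the paper handles it by building the second-moment count $L_r$ that simultaneously enforces large ball at scale $h$, not lane-rich, and controlled ball at scale $\rad$, then observing $L_r \le D_r$ and reusing the tight second moment for $D_r$ from part (a). None of this follows from a generic tree-conductance bound.
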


For percolation on the hypercube $Q_m$, assumptions \eqref{e:assump0} and \eqref{e:assump1} follow immediately from the estimates \eqref{e:pcestimate} and \eqref{e:Qmmix}. In \cite[Section 7.2]{HofNac12} it is shown that \eqref{e:assump2} holds for the hypercube. Hence, Theorem \ref{thm:generalmaxdiam} implies Theorems \ref{thm:maxdiam}, \ref{thm:Qmonearm}, and \ref{thm:mixing}.

Furthermore, assumptions \eqref{e:assump0}, \eqref{e:assump1}, \eqref{e:assump2} were verified in \cite[Theorem 1.4]{HofNac12} for expanders of high degree and high girth, hypercubes, and for products of complete graphs, and hence the conclusions Theorem \ref{thm:generalmaxdiam} hold for these classes of graphs as well. Lastly we remark that these assumption in fact imply the strong triangle condition \cite[Theorem~1.3(a)]{HofNac12}, but are not equivalent. Indeed, the tori $\mathbb{Z}^d_n$ when $n \to \infty$ and $d$ fixed satisfy \eqref{e:strongtriangle} but do not satisfy \eqref{e:assump1}.

\subsection{The structure of this paper}
In Section~\ref{sec:prelim} we start with some preliminaries: we recall bounds for subcritical and critical percolation from the literature, and we prove some easy consequences of these bounds. We also prove the (easy) lower bounds on the one-arm probability of Theorems~\ref{thm:Qmonearm} and \ref{thm:generalmaxdiam}(b).

In Section~\ref{sec:mainproof} we establish bounds on the moments of $|\Ccal|$ conditionally on having a large diameter, and use them to prove Theorems \ref{thm:numberlarge}, \ref{thm:lowertail}, and \ref{thm:general1}. In Section~\ref{sec:onearmsharp} we prove the upper bounds of Theorems~\ref{thm:Qmonearm} and \ref{thm:generalmaxdiam}(b), as well as Theorem~\ref{thm:subdiamoffA} concerning the effect that removing edges from the graph has on the one-arm probability.
In Section~\ref{sec:maxdiam} we then use these results to prove the bounds on the maximal diameter from Theorems~\ref{thm:maxdiam} and \ref{thm:generalmaxdiam}(a). Finally, in Section~\ref{sec:mixing} we prove the bounds on the mixing time from Theorem~\ref{thm:generalmaxdiam}(c) and Theorem~\ref{thm:mixing}.

\section{Preliminaries}\label{sec:prelim}

In this section we recall some of definitions, tools and previous results used in the proofs, and use them draw some simple conclusions. The first estimates involve the distribution of $|\Ccal(x)|$. Aizenman and Newman \cite[Proposition 5.1]{AizNew84} proved that if $G$ is a finite or infinite transitive graph,\footnote{The focus of \cite[Proposition 5.1]{AizNew84} is transitive \emph{infinite} graphs, but the statement and proof are valid for transitive finite graphs as well.} then for any $k \geq \chi(p)^2$
\begin{equation}\label{e:uppertail}
	\Pp(|\Ccal| \ge k) \le \sqrt{\frac{\e}{k}}\, \e^{-k / 2 \chi(p)^2}.
\end{equation}

Borgs \emph{et al.} \cite[Theorem 1.3]{BorChaHofSlaSpe05a} proved that if $G$ satisfies the strong triangle condition \eqref{e:strongtriangle}, then
\begin{equation}\label{e:chibds}
	\chi(p_c(1-\vep)) = \E_{p_c(1-\vep)}[|\Ccal|] =  (1 + o(1))\vep^{-1} .
\end{equation}

The following estimates concern the ``intrinsic'' metric of the percolation cluster, we require a few definitions first. Given vertices $x$ and $y$ and a non-negative integer $r$, we define the events
\begin{itemize}
	\item $\big\{x \stackrel{= r}{\lrfill} y\big\}$ if the \emph{shortest} path in $G_p$ connecting $x$ and $y$ has length precisely $r$,
	\item $\big\{x \stackrel{\le r}{\lrfill} y \big\}$ if the shortest path in $G_p$ connecting $x$ and $y$ has length at most~$r$,
	\item $\big\{x \stackrel{\ge r}{\lrfill} y \big\}$ if the shortest path in $G_p$ connecting $x$ and $y$ has length at least $r$.
\end{itemize}
It is worth noting here that $\{x \conn y\}$ and $\big\{x \stackrel{\le r}{\lrfill} y\big\}$
are monotone increasing with respect to adding edges (that is, if we replace a closed edge with an open edge, then the event continues to hold) while $\big\{x \stackrel{= r}{\lrfill} y \big\}$ and $\big\{x \stackrel{\ge r}{\lrfill} y \big\}$ are not (indeed, adding an edge to a graph can create a shorter shortest path between two vertices).

The \emph{intrinsic metric} ball of radius $r$ around a vertex $x$ in the graph $G$ and its boundary are defined by
\[
	B^{\sss G}_x (r) :=  \big\{y  \, : \, x \lrr y \big\} \qquad \text{ and } \qquad \partial B^{\sss G}_x (r) :=  \big\{y  \, : \, x \lrre y \big\} ,
\]
and we note that both are random sets with respect to $\Pp$. When $G$ is transitive we often abbreviate $B(r) = B^{\sss G}_x (r)$ and $\partial B(r) = \partial B^{\sss G}_x (r)$.

It is proved in \cite{KozNac09} that if $G$ satisfies the strong triangle condition \eqref{e:strongtriangle}, then there exists finite constants $C_1$ and $C_2$, that may depend on $\lambda$ of \eqref{e:chidef}, such that
\begin{equation}
     \E_{p_c} [|B(r)|] \le C_1 r, \label{e:critvolball}
   \end{equation}
   and
   \begin{equation}
	 \sup _{G' \subseteq G} \Ppc\big( \partial B^{\sss G'}_x (r) \neq \emptyset \big) \le \frac{C_2 }{r}. \label{e:critonearm}
\end{equation}
Note that the quantity $\Pp\big(x \lrr y\big)$ is monotone increasing in $p$ and so \eqref{e:critvolball} holds for any $p \leq p_c$. Furthermore, even though monotonicity in $p$ is unknown to hold for the quantity $\Pp\big(x \lrre y\big)$, the triangle condition \eqref{e:strongtriangle} from which \eqref{e:critonearm} follows \emph{is} monotone in $p$ and therefore \eqref{e:critonearm} holds for any $p \leq p_c$ as well.

Since $|B(r)| = \sum_{j=0}^r |\partial B(j)|$, it is reasonable to expect given \eqref{e:critvolball} that the sequence $\E_{p_c} [|\partial B(r)|]$ is bounded. It is, however, an open problem to show that the triangle condition implies this. In \cite[Theorem 4.1]{HofNac12} it is proved under the stronger conditions \eqref{e:assump1} and \eqref{e:assump2}. In fact, a stronger statement is proved under these assumptions: there exists a constant $C_b>0$ such that for any $G' \subseteq G$ and any $r$ we have
\begin{equation}\label{e:critlayer} \E_{p_c}\big[\big | \partial B^{\sss G'}_x(r) \big | \big]\leq C_b\, .\end{equation}
We remark here that since this estimate relies on conditions \eqref{e:assump1} and \eqref{e:assump2}, it will \emph{not} be used to prove Theorem \ref{thm:general1}.

While \cite{KozNac09} gives a corresponding lower bound for \eqref{e:critvolball} when $G$ is any infinite transitive graph, we obviously cannot expect such a lower bound to be valid for all $r$ when $G$ is a finite graph. In \cite[Lemmas 4.2 and 4.3]{HofNac12} it is proved for any transitive graph that there exist constants $c,\xi>0$ (that may depend on $\lambda$ in \eqref{e:chidef}) such that
\begin{equation} \label{e:critvolballlower}
	\Epc[|B(r)|] \ge \frac{r}{4} \quad \text{ and } \quad \Epc[|\dbr|] \ge c \qquad  \text{ for all } \quad r < \xi V^{1/3}.
\end{equation}
From here it is easy to obtain similar lower bounds for \eqref{e:critonearm} and \eqref{e:critlayer} and this is the content of Lemma~\ref{lem:critonearmlower} below.

Lastly, in \cite{NacPer08} general estimates that bound the probability that a cluster has small volume but large diameter are given. We recall these estimates now. Assume that $G=(\Vcal, \Ecal)$ is a graph with $V$ vertices and $p\in[0,1]$ is such that \eqref{e:critvolball} and \eqref{e:critonearm} hold at $p$ and that $r$ and $k$ are integers satisfying
\[
	 r \geq 16 C_2 k V^{-1/3} \, ,
\]
where $C_2$ is the constant from \eqref{e:critonearm}. Then, by \cite[Lemma 6.2]{NacPer08}, for any $x \in \Vcal$,
\begin{equation}\label{e:smallCdiam}
 \Pp \big ( |\Ccal(x)| \leq k \text{ and } \partial B^{\sss G}_x (r) \neq \emptyset \big ) \leq C_2 \max\big\{2 r^{-1} ,V^{-1/3} \big\}\, 2^{-{r^2 \over (64 C_2+2)k}} \, .
\end{equation}
Furthermore, by \cite[Lemma 6.3]{NacPer08}, if $k$ and $r$ satisfy
\[
	 r \geq 32 C_2 k V^{-1/3} \quad \text{ and } \quad r \geq \sqrt{2(64C_2 + 2)k}\, ,
\]
then,
\begin{equation}\label{e:smallCdiam2}
\Pp \big ( \exists x \in \Vcal \,: \, |\Ccal(x)| \leq k \text{ and } \partial B^{\sss G}_x (r) \neq \emptyset \big ) \leq 4C_2 \max \big\{ 2r^{-1}, V^{-1/3} \big \}\, {V \over k} 2^{-{r^2 \over (64 C_2+2)k}} \, .
\end{equation}
\medskip

The following lemma provides a corresponding lower bound to \eqref{e:critonearm}.
\begin{lemma}\label{lem:critonearmlower}
Let $(G_m)$ be a sequence of finite transitive graphs satisfying the triangle condition \eqref{e:strongtriangle}.
Then there exist constants $c_3, \zeta > 0$ such that,
\[
	\Ppc(\dbr \neq \emptyset) \ge \frac{c_3}{r} \quad \text{for all } r < \zeta V^{1/3} \, .
\]
\end{lemma}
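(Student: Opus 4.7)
The plan is a second-moment argument for $|\partial B(r)|$. By Cauchy--Schwarz,
\[
\Ppc(\partial B(r) \neq \emptyset) \;\geq\; \frac{(\Epc[|\partial B(r)|])^2}{\Epc[|\partial B(r)|^2]},
\]
and the second half of~\eqref{e:critvolballlower} already bounds the numerator below by $c^2$ for every $r < \xi V^{1/3}$. So the whole proof reduces to the companion upper bound $\Epc[|\partial B(r)|^2] \leq C r$, from which the lemma will follow with $c_3 = c^2/C$ and $\zeta = \xi$.

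For the second moment I would write $\Epc[|\partial B(r)|^2] = \sum_{u,v} \Ppc(0 \lrre u,\; 0 \lrre v)$ and decompose the joint event via the last common vertex $w$ on two canonically chosen shortest open paths of length $r$, from $0$ to $u$ and from $0$ to $v$. If $s = d(0, w)$, the portions of these paths from $w$ to $u$ and from $w$ to $v$ are edge-disjoint paths of length $r - s$. Writing $E_k(a, b)$ for the increasing event that there exists an open path of length exactly $k$ from $a$ to $b$, this yields the containment
\[
\{0 \lrre u,\, 0 \lrre v\} \;\subseteq\; \bigcup_{s=0}^r \bigcup_w E_s(0, w) \circ E_{r-s}(w, u) \circ E_{r-s}(w, v),
\]
and the BK inequality converts this into
\[
\Epc[|\partial B(r)|^2] \;\leq\; \sum_{s=0}^{r} \sum_w \Ppc(E_s(0, w)) \Bigl( \sum_u \Ppc(E_{r - s}(w, u)) \Bigr)^{\!2}.
\]

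The hard part will be showing, under the strong triangle condition~\eqref{e:strongtriangle} alone, that the two inner sums $\sum_u \Ppc(E_k(w, u))$ and $\sum_w \Ppc(E_s(0, w))$ are bounded uniformly in $k$ and $s$. A uniform bound on $\Epc[|\partial B(k)|]$ is stated as~\eqref{e:critlayer}, but it is available only under the stronger hypotheses~\eqref{e:assump1}--\eqref{e:assump2}; the sums appearing here --- counting endpoints of open paths of a \emph{specified} length rather than a strict intrinsic-distance layer --- are however amenable to iteration of~\eqref{e:strongtriangle} on intrinsic-metric connection probabilities in the spirit of~\cite{KozNac09} and \cite[\S 4]{HofNac12}. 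Once each inner sum is seen to be $O(1)$, summing $s$ over $\{0, 1, \ldots, r\}$ produces $\Epc[|\partial B(r)|^2] \leq C r$, and the Paley--Zygmund bound above then closes the argument.
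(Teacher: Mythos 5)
Your plan has a genuine gap at precisely the point you flag as ``the hard part.'' The inner sum you need, $\sum_u \Ppc(E_k(w,u))$, dominates $\Epc[|\partial B_w(k)|]$ (since $\{w \lrre u\}\subseteq E_k(w,u)$), and the paper explicitly records in Section~\ref{sec:prelim} that establishing $\Epc[|\partial B(k)|]=O(1)$ from the triangle condition \eqref{e:strongtriangle} alone is an \emph{open problem}; this uniform layer bound \eqref{e:critlayer} is only available under the stronger assumptions \eqref{e:assump1}--\eqref{e:assump2}, which Lemma~\ref{lem:critonearmlower} deliberately does not invoke. So the estimate you defer to is at least as hard as a known open question, and the second-moment bound $\Epc[|\partial B(r)|^2]\le Cr$ does not follow from what you have. (There is also a more minor point: the ``last common vertex'' decomposition needs some care, since two canonical shortest paths can merge and split repeatedly, but this could in principle be patched with a standard tree-graph argument; the real obstruction is the layer bound above.)

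The paper sidesteps exactly this issue by changing the random variable. Rather than a second moment for $|\partial B(r)|$, it observes the set identity $\{\partial B(r)\neq\emptyset\} = \{|B(ar)\setminus B(r-1)|>0\}$ for any $a\ge1$, and applies the second-moment inequality to $X=|B(ar)\setminus B(r-1)|$. The first moment is controlled from below by the difference of \eqref{e:critvolballlower} and \eqref{e:critvolball}, giving $\Epc[X]\ge (a/4-C_1)r$ for $r\le \xi a^{-1}V^{1/3}$; the second moment is controlled from above by $\Epc[|B(ar)|^2]$, and for the monotone increasing ball events the tree-graph/BK argument cleanly gives $\Epc[|B(ar)|^2]\le \Epc[|B(ar)|]^3\le (C_1 a)^3 r^3$ via \eqref{e:critvolball}. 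Both bounds use only \eqref{e:critvolball} and \eqref{e:critvolballlower}, so no uniform layer-volume estimate is ever needed; optimizing in $a$ then yields the $c_3/r$ lower bound. If you want to salvage your write-up, the cheapest fix is to replace $|\partial B(r)|$ by $|B(ar)\setminus B(r-1)|$ in your Paley--Zygmund step and then bound the second moment by $\Epc[|B(ar)|^2]$, which is a monotone quantity amenable to the standard BK decomposition $\{0\lrr w\}\circ\{w\lrr u\}\circ\{w\lrr v\}$.
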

\proof We follow the proof of \cite[Theorem 1.3(i)]{KozNac09}, where the equivalent statement is proved for critical percolation on $\Zd$ with $d$ large. For any $a \ge 1$,
\[
	\Ppc(\dbr \neq \emptyset) = \Ppc \big(|B(a r) \setminus B(r-1)| > 0 \big) \ge \frac{\Epc[|B(a r) \setminus B (r-1)| ]^2}{\Epc[|B (a r) \setminus B (r-1)|^2]} \, ,
\]
by the inequality $\P(V>0)\geq \E [V]^2/\E[V^2]$ valid for any non-negative random variable. Let $\xi>0$ be the constant from \eqref{e:critvolballlower}, so that \eqref{e:critvolball} and \eqref{e:critvolballlower} yield that
\[
	 \Epc[|B (a r) \setminus B (r-1)| ]\geq (a/4-C_1)r \quad \text{ for all } r \leq \xi a^{-1} V^{1/3} \, .
\]
Next, by a standard application of the BK-inequality \cite{BerKes85} (see e.g.\ \cite[page 652]{KozNac09} and also footnotes \ref{fn:disj} and \ref{fn:BK} below) we have that $\E_p[|B(r)|^2] \le \E_p[|B(r)|]^3$ and so by \eqref{e:critvolball} we get that $\Epc [|B (ar)|^2] \leq (C_1 a)^3 r^3$. Putting these together gives
\[
	\Ppc(\dbr \neq \emptyset)  \ge \frac{\left(\frac{a}{4} - C_1\right)^2}{C_1^3 a^3 r} \qquad \text{ for all } \quad r \leq a^{-1} \xi V^{1/3}.
\]
We maximize the right-hand side by putting $a = 12C_1$ and choose $\zeta = a^{-1} \xi$ and $c_3$ as the constant we get on the right-hand side above, concluding the proof. \qed
\medskip

We may now use our previous estimates on critical percolation to deduce a simple lower bound on the probability of the one-arm event in the subcritical phase.
\begin{lemma}\label{lem:subdiam}
Assume the setting of Theorem~\ref{thm:general1}.
Let $c_3$ and $\zeta$ be the same constants as in Lemma \ref{lem:critonearmlower}. Then the following assertions hold:
\begin{enumerate}
	\item[(a)] There exists a $c>0$ such that for all integers $r < \zeta V^{1/3}$,
\[
	\Ep[|\dbr|] \ge c (1-\vep)^r,
\]
and
 \[
 	\Pp(\dbr \neq \emptyset) \ge \frac{c_3}{r} (1-\vep)^r.
\]

\item[(b)] There exists a constant $c_4>0$ such that for any $A >1$
\[
	 \Pp(\partial B (A \vep^{-1}) \neq \emptyset) \le \vep \e^{-c_4 A}.
\]
\end{enumerate}
\end{lemma}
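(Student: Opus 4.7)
The proof has two parts. For part (a), my plan is to use the standard monotone coupling of percolation at $p = p_c(1-\vep)$ and at $p_c$: let $(U_e)$ be i.i.d.\ uniform on $[0,1]$ and set $\omega^p = \{e : U_e \le p\}$ and $\omega^{p_c} = \{e : U_e \le p_c\}$, so $\omega^p \subseteq \omega^{p_c}$ and each edge of $\omega^{p_c}$ is independently retained in $\omega^p$ with probability $1-\vep$. The key monotonicity is that intrinsic distances can only grow: $d^p(x,y) \ge d^{p_c}(x,y)$ whenever both are finite. Given $\omega^{p_c}$ with $y \in \dbr$ at $p_c$, select (by a measurable rule) a shortest $p_c$-path $\gamma(y)$ of length $r$ from $x$ to $y$; the event that all $r$ edges of $\gamma(y)$ survive the independent thinning has conditional probability exactly $(1-\vep)^r$, and on this event $d^p(x,y) \le r$ via $\gamma(y)$ while $d^p(x,y) \ge d^{p_c}(x,y) = r$, forcing $y \in \dbr$ at $p$. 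This yields the pointwise inequality $\Pp(y \in \dbr) \ge \Ppc(y \in \dbr)(1-\vep)^r$; summing over $y$ and invoking \eqref{e:critvolballlower} gives the expected-boundary lower bound, while applying the same argument to a single measurably-chosen $y^\star \in \dbr$ at $p_c$ and using Lemma~\ref{lem:critonearmlower} gives the one-arm lower bound (both for $r$ below the threshold of Lemma~\ref{lem:critonearmlower}).

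For part (b), set $r = A\vep^{-1}$ with $A > 1$ and decompose by cluster volume:
\[
\Pp(\dbr \neq \emptyset) \;\le\; \Pp\big(\dbr \neq \emptyset,\, |\Ccal| \le k\big) \,+\, \Pp(|\Ccal| > k),
\]
with $k := K A \vep^{-2}$ for an appropriate constant $K$. The hypothesis $\vep^3 V \to \infty$ yields the geometric condition $r \ge 16 C_2 k V^{-1/3}$ required by \eqref{e:smallCdiam}, and since $\chi(p) = (1+o(1))\vep^{-1}$ by \eqref{e:chibds}, the choice $K \ge 1$ together with $A > 1$ yields $k \ge \chi(p)^2$ required by \eqref{e:uppertail}. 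Plugging in, the first term is bounded by $2C_2(\vep/A)\,2^{-A/((64C_2+2)K)}$ and the second by $\vep\sqrt{e/(KA)}\,e^{-KA/(2(1+o(1))^2)}$; both have the form $C\vep e^{-c_\star A}$ for some constants $C, c_\star > 0$. To pass from $C\vep e^{-c_\star A}$ to the advertised $\vep e^{-c_4 A}$, I pick $c_4 \in (0, c_\star)$ so that $C e^{-c_\star A} \le e^{-c_4 A}$ whenever $A \ge A_0 := (\log C)/(c_\star - c_4)$, and for $A \in (1, A_0]$ I fall back on the subcritical one-arm bound $\Pp(\dbr \neq \emptyset) \le C_2/r$ from \eqref{e:critonearm} (valid for $p \le p_c$ by the monotonicity of the triangle condition in $p$), which is of order $\vep$ and therefore comparable to $\vep e^{-c_4 A}$ on this bounded range provided $c_4$ is chosen small enough.

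The main obstacle I anticipate is precisely the book-keeping of constants in the small-$A$ regime, since the decomposition produces a multiplicative constant that must be absorbed into the exponent of the final estimate. The conceptual heart of the proof is the clean splitting of the event $\{\dbr \neq \emptyset\}$ into a ``small cluster, long diameter'' contribution controlled by the critical tree-like estimate \eqref{e:smallCdiam}, and a ``large cluster'' contribution controlled by the subcritical volume-tail \eqref{e:uppertail}; the choice $k = \Theta(A\vep^{-2})$ equates the two exponential rates $r^2/k$ and $k/\chi(p)^2$, which is what produces the clean $e^{-c_4 A}$ decay.
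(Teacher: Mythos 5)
Your proposal follows essentially the same route as the paper. For part (a), the paper simply invokes the monotone-coupling inequalities of \cite[Lemma 3.4]{HofNac12} and plugs in \eqref{e:critvolballlower} and Lemma \ref{lem:critonearmlower}; you rederive that coupling inequality from first principles (fix a shortest $p_c$-path, thin its edges with retention probability $1-\vep$, and use the monotonicity of intrinsic distances under the coupling to rule out shorter shortcuts appearing after the thinning), which is exactly the content of the cited lemma and is correct. For part (b), you use the same decomposition of $\{\dbr \ne \emptyset\}$ by cluster volume with threshold $k = \Theta(A\vep^{-2})$, feeding the long-but-small piece into \eqref{e:smallCdiam} and the large-volume piece into \eqref{e:uppertail}; the verification of the geometric hypotheses via $\vep^3 V \to \infty$ and \eqref{e:chibds} is identical to the paper's.

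One caveat worth recording is that the fall-back you propose for $A \in (1, A_0]$ does not actually absorb the multiplicative constant. The bound $\Pp(\dbr \ne \emptyset) \le C_2/r = C_2\vep/A$ from \eqref{e:critonearm} is roughly $C_2\vep$ for $A$ near $1$, and since nothing forces $C_2 < 1$ (taking $r=1$ in \eqref{e:critonearm} already gives $C_2 \ge \Ppc(\partial B(1) \ne \emptyset) \approx 1-\e^{-1}$, and the constant actually produced in \cite{KozNac09} is larger), the inequality $C_2\vep/A \le \vep \e^{-c_4 A}$ cannot be forced near $A=1$ by shrinking $c_4$: shrinking $c_4$ only pushes $\vep\e^{-c_4 A}$ up toward $\vep$, never above $C_2\vep$. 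You correctly identify this as the main obstacle, but the proposed resolution does not close it. In fairness, the paper's own proof glosses over the same point: it derives $\tfrac{2C_2\vep}{A}2^{-A/(64C_2+2)} + \sqrt{\e/A}\,\vep\,\e^{-A/2} \le \vep\e^{-c_4 A}$ ``for some $c_4>0$ sufficiently small'', which again fails near $A=1$ unless $C_2$ is quite small. The clean fix is to state part (b) with a leading multiplicative constant, $\Pp(\partial B(A\vep^{-1})\ne\emptyset)\le C\vep\e^{-c_4 A}$, which is all that any downstream application requires; or equivalently to restrict the range to $A\ge A_0(C_2)$.
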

\begin{rk}\emph{ Note that the upper bound in part (b) is weaker than the upper bound in Theorem~\ref{thm:generalmaxdiam}(b), but that the assumptions here are also weaker.
}
\end{rk}
\proof
(a) It is an easy consequence (see \cite[Lemma 3.4]{HofNac12} for a proof) of the standard simultaneous coupling between percolation with parameters $p_1$ and $p_2$ satisfying $0 \le p_1 \le p_2 \le 1$ that
\[
	\E_{p_1} [ |\dbr|] \ge \left(\frac{p_1}{p_2}\right)^r \E_{p_2}[|\dbr|],
\]
and
\[
	\P_{p_1} (\dbr \neq \emptyset) \ge \left(\frac{p_1}{p_2}\right)^r \P_{p_2}(\dbr \neq \emptyset).
\]
for any integer $r$. Thus the proof of part (a) is concluded by taking $p_1 = p_c(1-\vep)$ and $p_2 = p_c$ and applying \eqref{e:critvolballlower} and Lemma \ref{lem:critonearmlower}, respectively.
\medskip

(b) Put $r=A\vep^{-1}$ and $k=A \vep^{-2}$ and bound
\[
	 \Pp(\partial B (r) \neq \emptyset) \le \Pp \big( |\Ccal| \le k \text{ and }\partial B (r) \neq \emptyset \big) + \Pp(|\Ccal| \ge k) \, .
\]
Note that as long as $A > 1$ and $V$ is large enough, by \eqref{e:chibds} we have that $k \geq \chi(p)^2$ and since $\vep^3 V \to \infty$ we also have that $r \gg k V^{-1/3}$. Hence we may apply \eqref{e:uppertail} and \eqref{e:smallCdiam} in the above inequality to obtain
\[\begin{split}
	\Pp(\partial B (A \vep^{-1}) \neq \emptyset) & \le \frac{2C_2 \vep}{A} 2^{-\frac{A}{64 C_2+ 2}} + \sqrt{\frac{\e}{A}} \,\vep\, \e^{-A/2} \le \vep \e^{-c_4 A},
\end{split}
\]
for some $c_4>0$ sufficiently small.\qed

\proof[Proof of the lower bounds in Theorems \ref{thm:Qmonearm} and \ref{thm:generalmaxdiam}(b)] These follow immediately from Lemma~\ref{lem:subdiam} and the fact that when $\vep^3 V \to \infty$ we have $\vep^{-1} \log(\vep^3 V) \ll V^{1/3}$. \qed

\section{Cluster sizes: proofs of Theorems \ref{thm:numberlarge} and \ref{thm:lowertail}}\label{sec:mainproof}

We start with bounds on the first and second moment of the typical cluster size, conditioned on the event that the diameter of the cluster is large.


\begin{lemma}\label{lem:firstmom}
Assume the setting of Theorem \ref{thm:general1} and let $\zeta>0$ be the constant from Lemma~\ref{lem:subdiam}. Then
there exists a constant $c>0$ such that for any $A \in [1, \zeta \vep V^{1/3}]$
\[
	\Ep \big [|\Ccal| \, \big | \,  \partial B (A \vep^{-1}) \neq \emptyset \big ] \ge c A \vep^{-2} \, .
\]
\end{lemma}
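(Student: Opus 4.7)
Set $r := A \vep^{-1}$ and $E_r := \{\partial B(r) \neq \emptyset\}$. The claim is equivalent to
\[
\Ep[|\Ccal| ; E_r] \geq c A \vep^{-2} \Pp(E_r),
\]
so my plan is to choose a threshold $k := \delta A \vep^{-2}$ for a small absolute constant $\delta > 0$ (to be determined) and apply the trivial bound $\Ep[|\Ccal|; E_r] \ge k \cdot \Pp(|\Ccal| \ge k, E_r)$. Writing
\[
\Pp(|\Ccal| \ge k, E_r) = \Pp(E_r) - \Pp(|\Ccal| < k, E_r),
\]
the task reduces to showing $\Pp(|\Ccal| < k, E_r) \le \tfrac12 \Pp(E_r)$ uniformly for all $A \in [1, \zeta \vep V^{1/3}]$, which yields the result with $c = \delta/2$.

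The upper bound on the joint probability is provided by inequality \eqref{e:smallCdiam}, which applies since \eqref{e:critvolball} and \eqref{e:critonearm} hold at every $p \le p_c$ by monotonicity (noted in Section~\ref{sec:prelim}). For our choice, $r^2/k = A/\delta$ so that, after verifying the side conditions $r \ge 16 C_2 k V^{-1/3}$ (which reduces to $\vep V^{1/3} \ge 16 C_2 \delta$, ensured by $A \ge 1$, $A \le \zeta \vep V^{1/3}$, and $\delta \le 1/(16 C_2 \zeta)$) and that $\max\{2/r, V^{-1/3}\} = 2/r$ (ensured by choosing $\zeta$ sufficiently small), \eqref{e:smallCdiam} yields
\[
\Pp(|\Ccal| < k, E_r) \leq \frac{2 C_2 \vep}{A} \cdot 2^{-A / ((64 C_2 + 2)\delta)}.
\]
For the lower bound on $\Pp(E_r)$, Lemma~\ref{lem:subdiam}(a) together with the elementary estimate $(1-\vep)^{A/\vep} \ge e^{-A(1+o(1))}$ gives
\[
\Pp(E_r) \ge \frac{c_3 \vep}{A} e^{-A(1+o(1))}.
\]

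Taking the ratio, the $\vep/A$ factors cancel and we obtain
\[
\frac{\Pp(|\Ccal| < k, E_r)}{\Pp(E_r)} \leq \frac{2 C_2}{c_3} \exp\!\left( -A \left[ \frac{\log 2}{(64 C_2 + 2)\delta} - 1 - o(1) \right] \right).
\]
By choosing $\delta$ small enough that the bracketed expression exceeds $1 + \log(4 C_2 / c_3)$ (say), this ratio is bounded by $\tfrac12$ uniformly for all $A \ge 1$, which is the required estimate.

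The only real point of attention is the selection of $\delta$: it must satisfy the three absolute constraints (the side condition from \eqref{e:smallCdiam}, the choice $\max\{2/r, V^{-1/3}\} = 2/r$, and the decay constraint making the ratio $\le \tfrac12$), all of which are compatible since they merely require $\delta$ to be bounded above by explicit constants depending on $C_2, c_3, \zeta$. I expect no substantive obstacle: the heart of the proof is already contained in \eqref{e:smallCdiam}, which asserts that small volume and large intrinsic diameter together form an exponentially rare event; Lemma~\ref{lem:subdiam}(a) shows that the diameter event alone is not nearly so rare, and their ratio forces the conditional volume to be at least of order $A \vep^{-2}$.
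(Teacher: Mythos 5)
Your proof is correct and follows the same route as the paper: threshold at $k=\delta A\vep^{-2}$, bound the joint small-volume/large-radius probability by \eqref{e:smallCdiam}, compare against the one-arm lower bound of Lemma~\ref{lem:subdiam}(a), and choose $\delta$ small so the ratio stays below $\tfrac12$ uniformly in $A$. The only stylistic difference is that you work with $\Ep[|\Ccal|\mathbbm{1}_{E_r}]$ rather than the conditional expectation directly, and you spell out the side conditions of \eqref{e:smallCdiam} more explicitly; the substance is identical.
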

\proof
We put $r=A\vep^{-1}$ and $k = \alpha A \vep^{-2}$ for some small $\alpha >0$ that will be chosen later. We bound
\[
	\Ep\big [ |\Ccal| \, \big | \, \dbr \neq \emptyset \big ] \,\ge\, k \,\Pp\left(|\Ccal| \ge k \,\mid\, \dbr \neq \emptyset\right) \,=\,  \frac{k \cdot \Pp\left(|\Ccal| \ge k \text{ and } \dbr \neq \emptyset\right)}{\Pp(\dbr \neq \emptyset)}.
\]
Since $\vep^3 V \to \infty$, the conditions of \eqref{e:smallCdiam} hold, so we obtain
\[
	\Pp(|\Ccal| \le k \text{ and }\dbr \neq \emptyset) \le {C \vep \over A} e^{-{A \over C \alpha}} \, ,
\]
for some constant $C>0$. Since $r \leq \zeta V^{1/3}$, by Lemma \ref{lem:subdiam}(a) we get $\Pp(\dbr \neq \emptyset) \geq c\vep A^{-1} e^{-cA}$, so when $\alpha >0$ is chosen to be a small enough (but fixed) we get that
\[
	\frac{\Pp\left(|\Ccal| \ge k \text{ and } \dbr \neq \emptyset\right)}{\Pp(\dbr \neq \emptyset)} \ge 1 - \frac{C \vep A^{-1} \e^{-\frac{A}{C \alpha}}}{c \vep A^{-1} \e^{-c A}} \geq 1/2 \, ,
\]
giving the lemma. \qed \medskip

\begin{lemma}\label{lem:secmom} Consider percolation on a transitive graph $G$ with parameter $p \in [0,1]$. For any integer $r \ge 1$,
\[
	\Ep\big [|\Ccal|^2 \, \big | \,  \dbr \neq \emptyset \big ] \le \chi(p)^2 r(r+1) + \chi(p)^3 (r+1).
\]
\end{lemma}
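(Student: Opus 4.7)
Write $|\Ccal|^2 = \sum_{y_1, y_2}\1\{y_1, y_2 \in \Ccal\}$, so that
\[
\Ep[|\Ccal|^2 \1\{\dbr \neq \emptyset\}] = \sum_{y_1, y_2} \Pp(x \conn y_1, x \conn y_2, \dbr \neq \emptyset),
\]
and bound each three-event probability by a BK/tree-graph argument that keeps track of the event $\{\dbr \neq \emptyset\}$. The claimed bound $[\chi(p)^2 r(r+1) + \chi(p)^3(r+1)]\Pp(\dbr \neq \emptyset)$ matches the heuristic that, conditional on $\dbr \neq \emptyset$, the cluster $\Ccal$ looks like a spine of length $r$ (carrying $r+1$ vertices) together with roughly $r+1$ essentially independent bushes, each with first moment $O(\chi(p))$ and second moment $O(\chi(p)^3)$; the squared expectation of a sum of $r+1$ such bushes is of order $r(r+1)\chi(p)^2 + (r+1)\chi(p)^3$.

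To make this rigorous, for each pair $(y_1, y_2)$ I would decompose the event $\{x\conn y_1, x\conn y_2, \dbr\neq\emptyset\}$ according to the BFS tree of $\Ccal$ rooted at $x$. Let $u$ be the deepest common ancestor of $y_1, y_2$ in this tree, at intrinsic depth $a := d(x,u)$, and split into three cases by how the event $\{\dbr \neq \emptyset\}$ arises: (I) $a \ge r$, so the path $x \to u$ already witnesses $\dbr \neq \emptyset$; (II) $a < r$ but $d(x,y_i) \ge r$ for some $i$, so the branch $u \to y_i$ crosses $\partial B(r)$; (III) otherwise, a separate ``deep arm'' emanates edge-disjointly from some vertex $w$ on the $x$-to-$u$-to-$y_i$ subtree and ends at a vertex $z \in \dbr$. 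By BK applied to shortest-path events, each case yields a product bound in terms of factors $\Pp(x \stackrel{=a}{\lrfill} u)$, $\Pp(u \conn y_i)$ and, in case (III), a disjoint one-arm factor $\Pp(\partial B_w(r-d(x,w)) \neq \emptyset)$.

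Summing over $y_1, y_2$ contributes a factor of $\chi(p)^2$ through $\sum_y \Pp(u \conn y) = \chi(p)$, while summing over the branching vertex $u$ (and, in case (III), over $w$) produces convolutions of the sequence $B_j := \Ep[|\partial B(j)|]$. The main obstacle, which I expect to be the trickiest step, is to extract a factor of $\Pp(\dbr \neq \emptyset)$ from the remaining sums rather than the weaker $B_r = \Ep[|\dbr|]$: the naive bound $\1\{\dbr \neq \emptyset\} \le |\dbr|$ would insert $B_r$, which can strictly exceed $\Pp(\dbr \neq \emptyset)$ (note by BK/log-subadditivity one only has $\sum_a B_a B_{r-a} \ge (r+1)B_r$). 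Instead, I would keep the indicator $\1\{\dbr \neq \emptyset\}$ explicit: in cases (I) and (II) the deep arm is built into the sums $\sum_u \Pp(x \stackrel{=a}{\lrfill} u)\,\Pp(\partial B_u(r-a) \neq \emptyset)$, which are directly comparable to $\Pp(\dbr \neq \emptyset)$ by concatenating disjoint paths; in case (III) the disjoint deep arm already carries a factor of $\Pp(\partial B_w(\cdot) \neq \emptyset)$, and summing over the $O(r)$ choices of $w$ along the sub-tree produces the $(r+1)$ factor. Combining the case-by-case estimates gives the desired bound on $\Ep[|\Ccal|^2 \1\{\dbr\neq\emptyset\}]$, after which division by $\Pp(\dbr \neq \emptyset)$ yields the lemma.
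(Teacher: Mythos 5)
Your setup $|\Ccal|^2 = \sum_{y_1,y_2}\1\{y_1,y_2\in\Ccal\}$ matches the paper, and you have correctly identified where the real difficulty lives: one must extract the factor $\Pp(\dbr\neq\emptyset)$ rather than the weaker $\Ep[|\dbr|]$, which can be much larger. But your proposed resolution is not sound. You suggest decomposing by the deepest common ancestor $u$ of $y_1,y_2$ in the BFS tree, and claim that the resulting sums $\sum_u \Pp\big(x \stackrel{=a}{\lrfill} u\big)\,\Pp\big(\partial B_u(r-a)\neq\emptyset\big)$ are ``directly comparable to $\Pp(\dbr\neq\emptyset)$ by concatenating disjoint paths.'' By transitivity this sum equals $\Ep[|\partial B(a)|]\cdot\Pp(\partial B(r-a)\neq\emptyset)$, which is a second-moment type quantity and does \emph{not} admit the upper bound $C\,\Pp(\dbr\neq\emptyset)$ in general. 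Concatenation and BK only yield the inequality in the unhelpful direction: they show that $\Pp(\dbr\neq\emptyset)$ is \emph{at most} a product of one-arm probabilities, not that the sum over $u$ of first-moment $\times$ one-arm is controlled by $\Pp(\dbr\neq\emptyset)$. Moreover, even pointwise, $\{x\stackrel{=a}{\lrfill}u\}$ occurring disjointly with $\{\partial B_u(r-a)\neq\emptyset\}$ does not imply $\partial B_x(r)\neq\emptyset$, since a shortcut through $B_x(a)$ can shorten the concatenated path.

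The paper resolves this by choosing the long arm, not the branching vertex, as the primary object of the decomposition. One defines $\Acal_r(v,\eta)$ to be the event that $\eta$ is the \emph{lexicographically first} open shortest path of length $r$ from $v$; these events are pairwise disjoint and partition $\{\dbr\neq\emptyset\}$, so that $\sum_\eta \Pp(\Acal_r(v,\eta)) = \Pp(\dbr\neq\emptyset)$ exactly, with no loss. Then for each fixed $\eta$ one decomposes the connections $v\conn x$ and $v\conn y$ \emph{relative to $\eta$}: either $x$ and $y$ hook onto $\eta$ at two distinct vertices $\eta(m)\neq\eta(n)$, or they hook onto $\eta$ at a single $\eta(m)$ and share a further common path to some $z$ before branching. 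Applying BKR (with witnesses for $\Acal_r(v,\eta)$ being the open edges of $\eta$ together with all closed edges) and summing over $x,y$ and the indices $m,n$ (respectively $m,z$) gives precisely the two terms $\chi(p)^2 r(r+1)\Pp(\dbr\neq\emptyset)$ and $\chi(p)^3(r+1)\Pp(\dbr\neq\emptyset)$. This ``first witness'' device is the missing idea: it converts the combinatorial sum over candidate arm paths into exactly the probability $\Pp(\dbr\neq\emptyset)$, which your common-ancestor decomposition cannot do.
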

\proof
For a simple path $\eta$ of length $r$ starting at $v$ we write the event
\begin{equation}\label{e:Agammadef}
	\Acal_r(v,\eta) := \big\{\eta \text{ is the first shortest open path of length $r$} \big\} \, ,
\end{equation}
where by ``first'' we mean according to some fixed predetermined ordering of paths (such as the lexicographical order). In other words, $\Acal_r(v,\eta)$ is the event that $\eta$ is the first open path of length $r$ such that the last vertex of $\eta$ is in $\partial B_v(r)$. Observe that
\begin{equation} \label{gammar}
	{\biguplus_{\eta}} \Acal_r(v, \eta) = \big \{\partial B_v (r) \neq \emptyset \big \}.
\end{equation}
Since the events $\{\Acal_r(v,\eta)\}_\eta$ are mutually disjoint we can write
\[\begin{split}
	\Ep[|\Ccal(v)|^2\indi_{\{\partial B_v (r) \neq \emptyset\}}]
	 = \sum_{x,y} \sum_{\eta} \Pp\left(v \conn x \text{ and } v \conn y \text{ and } \Acal_r(v,\eta) \right).
\end{split}
\]
If the event $\{v \conn x\} \cap \{v \conn y\} \cap \Acal_r(v,\eta) $ occurs, then one of the following events must occur (see Figure \ref{fig:Pf32}):
\begin{enumerate}
\item There exists integers $m\neq n$ with $1 \leq m,n \leq r$ such that the events $\Acal_r(v,\eta)$, $\eta(m) \conn x$ and $\eta(n) \conn y$ occur disjointly,\footnote{\label{fn:disj} Given two events $A$ and $B$, we write $A \circ B$, and say that $A$ and $B$ \emph{occur disjointly} if, given a percolation configuration $\omega$, there exists a set of edges $W_A(\omega)$ so that we can verify whether $\omega \in A$ by examining the status of only edges in $W_A(\omega)$, while we can verify whether $\omega \in B$ by examining the status of only edges in $W_B(\omega) \subseteq \omega \setminus W_{A}(\omega)$. We call $W_A(\omega)$ the set of \emph{witness edges} for $A$.} or,
\item There exists $1 \leq m \leq r$ and a vertex $z$ such that the events $\Acal_r(v,\eta)$, $\eta(m) \conn~z$, $z \conn x$ and $z \conn y$ occur disjointly.
\end{enumerate}

\begin{figure}
	\includegraphics[width = 0.8\textwidth]{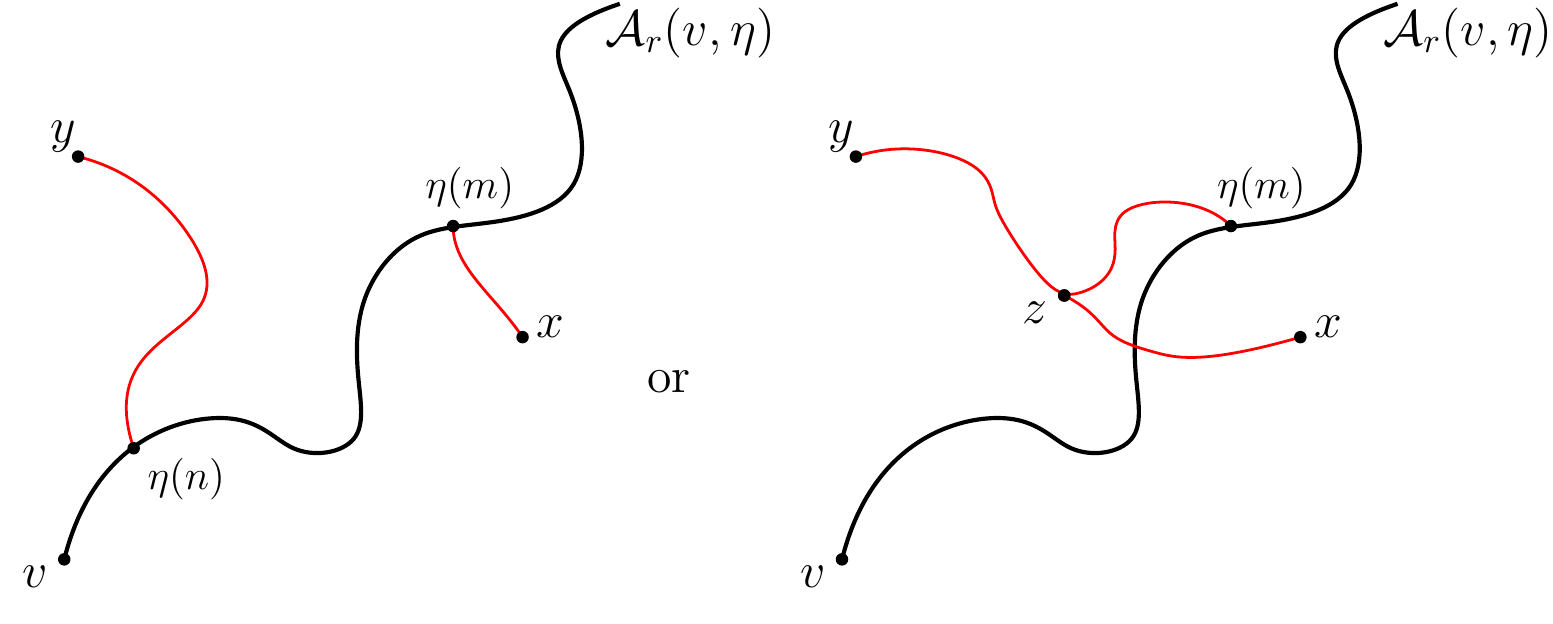}
	\caption{\label{fig:Pf32} The two cases of $\{v \conn x\} \cap \{v \conn y\} \cap \Acal_r(v,\eta)$.}
\end{figure}

To see this implication consider an open path from $x$ to $v$ and let $\gamma_x$ be the part of this path from $x$ until the first time it hits $\eta$, so that $\gamma_x$ and $\eta$ are edgewise disjoint. (If $x$ is a vertex on $\eta$ then $\gamma_x=\emptyset$). Now consider another open path, from $y$ from $v$ and let $\gamma_y$ be the part of this path from $y$ until the first time it hits $\eta \cup \gamma_x$. If $\gamma_y$ ends at $\eta$ rather than $\gamma_x$, then this is an instance of case (i) above when we write $m,n$ for the positions on $\eta$ of the meeting points of $\gamma_x$ and $\gamma_y$ with $\eta$, respectively.  If $\gamma_y$ ends at $\gamma_x$ instead of $\eta$, then this is an instance of case (ii) above when we write $z$ for that meeting point and $m$ for the position on $\eta$ of the meeting point of $\eta$ with $\gamma_x$.

In case (i) the disjoint witnesses for the occurrence of the events are the edges of $\eta$ together with all the closed edges (these open and closed edges determine $\Acal_r(v,\eta)$ since one can check that $\eta$ is open and any other path of length $r$ that is prior to $\eta$ in the fixed ordering has a closed edge in it), the edges of $\gamma_x$ (for $\eta(m) \conn x$) and the edges of $\gamma_y$ (for $\eta(n) \conn y$). Similarly, in case (ii) the witnesses are the edges of $\eta$ together with all closed edges, the edges on $\gamma_x$ from $\eta(m)$ to $z$, the edges of $\gamma_x$ from $z$ to $x$ and the edges of $\gamma_y$.

BKR-inequality \footnote{\label{fn:BK} The \emph{van den Berg-Kesten-Reimer inequality} (or BKR-inequality) states that disjoint events are negatively correlated, i.e., $\Pp(A \circ B) \le \Pp(A) \Pp(B)$. If $A$ and $B$ are increasing events (i.e., if $\Pp(A) \le \P_{q}(A)$ for all $0 \le p \le q \le 1$), then we call this bound the BK-inequality \cite{BerKes85}. The BK-inequality is usually easier to apply, because it is easy to verify whether increasing events occur disjointly. Applying the BKR-inequality to non-increasing events (such as $\{\partial B(r) \neq \emptyset\}$) often requires more care, see \cite[Section~3]{HofNac12} for a discussion.} \cite{Reim00} now yields

\[\begin{split}
	\Ep[|\Ccal|^2\indi_{\{\dbr \neq \emptyset\}}] &\le  \sum_{x,y} \sum_{m\neq n}^r  \sum_{\eta}  \Pp(\Acal_r(v,\eta)) \Pp(\eta(m) \conn x) \Pp(\eta(n) \conn y) \\ & \quad + \sum_{x,y,z} \sum_{m=0}^r \sum_{\eta}  \Pp(\Acal_r(v,\eta)) \Pp(\eta(m) \conn z) \Pp(z \conn x) \Pp(z \conn y) .
\end{split}
\]
For the term on the first right-hand side we first sum over $x$ and $y$ and get $\chi(p)^2$, then over $m,n$ and get a $r(r+1)$ factor and lastly the sum over $\eta$ gives another factor $\P_p (\dbr \neq \emptyset)$. For the second term we first sum over $x$ and $y$, then over $z,m,\eta$ and get $\chi(p)^3(r+1) \P_p (\dbr \neq \emptyset)$, concluding the proof of the lemma. \qed
\medskip

\proof[Proof of Theorem \ref{thm:general1}] We start with the proof of part (b) of the theorem, which is a straightforward application of the two previous lemmas and a second moment bound. Let $c,\zeta>0$ be the constants from Lemmas \ref{lem:subdiam} and \ref{lem:firstmom}. Put $A \in [2/c, \zeta \vep V^{1/3}]$ and $r=A\vep^{-1}$ and $k = (c/2)A\vep^{-2}$. Recall that for any non-negative random variable $V$ we have $\P( V \geq a ) \geq (\E[ V]-a)^2/\E[V^2]$ for any $a \leq \E [V]$. Hence, by Lemma \ref{lem:firstmom} we may bound
\begin{equation}\label{e:secmom}
\begin{split}
	\Pp(|\Ccal| \ge k) &\,\ge\,  \Pp(\dbr \neq \emptyset) \Pp(|\Ccal| \ge k \,\mid\, \dbr \neq \emptyset) \\
	& \,\ge\, \Pp (\dbr \neq \emptyset) \frac{\left(\Ep\left[|\Ccal| \,\mid\, \dbr \neq \emptyset \right] -k\right)^2}{\Ep\left[|\Ccal|^2 \,\mid\, \dbr \neq \emptyset\right]}.
\end{split}
\end{equation}
Now we apply the bounds from Lemmas \ref{lem:subdiam}, \ref{lem:firstmom}, and \ref{lem:secmom}, and \eqref{e:chibds} and get
\begin{equation}\label{e:clustertail}
\begin{split}
	\Pp(|\Ccal| \ge k ) &\ge  \frac{c_3 \vep}{A}(1-\vep)^{A \vep^{-1}} \cdot \frac{\left(c/2 A \vep^{-2} \right)^2}{C_4 A^2 \vep^{-4}}
	\ge  \frac{c_2 \vep}{A} \e^{-A},
\end{split}
\end{equation}
concluding the proof of part (b).
\medskip

To prove part (a), let $\alpha\in(0,1)$ be arbitrary, and let $\delta=(1-\alpha)$. By part (b) of this theorem we may choose some $c>0$ so that when $k=c \vep^{-2} \log(\vep^3 V)$ we have
\[
	\Pp(|\Ccal| \ge k) = \Omega(\vep (\vep^3 V)^{-\delta}).
\]
Write $\Zk$ for the random variable counting the number of vertices in clusters of size at least $k$, i.e.,
\begin{equation}\label{e:Zkdef}
	\Zk := \# \big \{x \, : \, |\Ccal(x)| \ge k \big \} \, ,
\end{equation}
so that $\E_p[\Zk] = \Omega ( \vep V (\vep^3 V)^{-\delta})$. By the pigeonhole principle we have that for $s \ge t$,
\[
	\big\{\Zk \ge s k \big\} \cap \big\{|\Ccal_1| \le t k\big\} \subseteq \big\{|\Ccal_{\lfloor s / t \rfloor}| \ge k\big\}.
\]
We now let $j$ be an integer satisfying $j \in [1, (\vep^3 V)^\alpha]$ and put $t = 3/c$ and $s=jt$. It follows from \eqref{e:largeclusterupper} that $\Pp(|\Ccal_1| \ge tk) = o(1)$. Hence, it remains to show that $\Pp(\Zk \ge s k) = 1-o(1)$. By the Paley-Zygmund inequality,
\[
	\Pp(\Zk \ge s k) \ge \frac{\left( \Ep[\Zk] - s k\right)^2}{\Ep[\Zk]^2 + \Var(\Zk)} \, ,
\]
when $sk < \Ep[\Zk]$.
Since $j \leq (\vep^3 V)^\alpha$ and $\vep^3 V \to \infty$ and $\delta<1$ we have that $sk = o(\Ep[\Zk])$. Lastly, it is shown in \cite[Lemma 7.1]{BorChaHofSlaSpe05a} that $\mathrm{Var}_p(\Zk) \le V \chi(p)$ and so by \eqref{e:chibds} we obtain that $\Var(\Zk) = o(\Ep[\Zk]^2)$, concluding the proof.\qed

\section{Improved bounds on the one-arm probability}\label{sec:onearmsharp}

In this section we prove upper bounds (that give the sharp exponents) for the probability of the one-arm event (improving upon Lemma \ref{lem:subdiam}(b)) and on the expected size of the boundary, thus completing the proof of Theorem~\ref{thm:generalmaxdiam}(b). In the next section we will use these to prove the upper bound in Theorem~\ref{thm:Qmonearm} and to prove Theorem~\ref{thm:generalmaxdiam}(a).

\subsection{The off-method and bounds on the probability of a long connection}\label{sec:offmethod}
For the proofs we will require two useful estimates from \cite{HofNac12}. The first is a sharp upper bound on the connection probabilities between any two vertices by an open path that is longer than $\mnot$, see \cite[Section 3.4]{HofNac12} and in particular Lemma~3.15 of that paper for the proofs. We do not quote the precise statements from \cite{HofNac12}, but rather state only the consequences that we require in this paper.

One of these bounds, and several more below, make use of the so-called \emph{off-method}.
Given a graph $G = (\Ecal, \Vcal)$ and a subset of the \emph{edge set} $A \subset \Ecal$, we say that an event ``$F$ occurs off $A$'' if $F$ occurs without using any edges in $A$.\footnote{In the literature, the off-method is usually applied with reference to a vertex set, implicitly using the set of all edges that contain a vertex of $A$ in the graph. Here we use an edge set because the traditional definition is a bit unwieldy in our setting. All results from the literature that we use are valid with our more general definition.} More precisely, given a configuration $\omega \in \{0,1\}^{\Ecal}$, let $\omega_A$ be the configuration such that $\omega_A(e) = \omega(e)$ if $e \notin A$, and $\omega_A(e) = 0$ if $e \in A$. Then $\omega \in \{F$ off $A\}$ iff $\omega_A \in F$.
We frequently write $\Pp^A$ for the measure $\Pp^{A}(E) = \Pp(E \off A)$, and similarly, we write $\E_p^A$. Note that $\Pp^A$ is a product measure on $\{0,1\}^{\Ecal \setminus A}$.
 We use the off-method to factorize probabilities. The off-method, for example, can be used to enforce independence, since
\[
	\Pp(\{E \text{ off  }A\} \cap \{F \text{ off }A^c\}) =  \Pp(E \text{ off  }A) \Pp (F \text{ off }A^c).
\]
We allow $A =A(\omega)$, that is, the set $A$ may depend on the configuration. In particular, we will often take $A$ to be a metric ball, i.e., we consider events of the form $\{E \off B_x(r)\}$. In this case we take $A = A(\omega)$ to be
the set of all open edges on a path of open edges of length at most $r$ started at $x$, and of all closed edges that share an end-point with one or two of those open edges.
Observe that we can indeed determine what $B_x(r)$ is for any given $\omega$ by inspecting only the status of the edges in $A(\omega)$. In this setting, $\P_p^{A(\omega)}$ is of course no longer a product measure. We deal with this difficulty whenever it occurs below by using an appropriate conditioning scheme.

Recall the definition of the non-backtracking walk mixing time $\tmix$ defined in \eqref{e:mnotdef} above.

\begin{lemma}[Uniform connection bounds, \cite{HofNac12}]\label{lem:unifconn} Let $G = (\Vcal,\Ecal)$ be a transitive graph satisfying \eqref{e:assump1} and \eqref{e:assump2} and $p \leq p_c$. Then for any vertices $x$ and $y$,
\begin{equation}\label{e:unifconnprob2}
\Pp \big(x \stackrel{\geq \mnot}{\lrfill} y\big) \leq 3V^{-1} \chi(p) \, ,
\end{equation}
and for any $t \ge \mnot$ and any $A \subset \Ecal$
\begin{equation}\label{e:unifconnprob}
	\Pp \big(x \stackrel{=t}{\lrfill} y \text{\emph{ off }} A \big) \le 3V^{-1} \Ep[|\partial B (t - \mnot) \text{\emph{ off }} A|].
\end{equation}
\end{lemma}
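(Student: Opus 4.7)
The plan is to follow the strategy of \cite[Section~3.4, Lemma~3.15]{HofNac12}. Both bounds rest on the same two-step idea: use a union bound over self-avoiding walks to control the open-path event, and then exploit the sharp mixing of the non-backtracking walk to extract the factor $V^{-1}$.

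For part \eqref{e:unifconnprob2}, I would first observe that $\{x \stackrel{\geq \tmix}{\lrfill} y\}$ implies the existence of an open self-avoiding path from $x$ to $y$ of length at least $\tmix$, so a union bound (this is really just Markov applied to the count of open SAWs) gives
\[
\Pp\bigl(x \stackrel{\geq \tmix}{\lrfill} y\bigr) \leq \sum_{t \geq \tmix} p^t \cdot |\mathrm{SAW}(x,y;t)|.
\]
Since every SAW is non-backtracking, $|\mathrm{SAW}(x,y;t)| \leq m(m-1)^{t-1}\, \p^t(x,y)$. I would then pair consecutive terms $t$ and $t+1$ in order to invoke the definition of $\tmix$ in \eqref{e:mnotdef}, which yields $\p^t(x,y) + \p^{t+1}(x,y) \leq 2(1+\alpha_m)V^{-1}$ for $t \geq \tmix$. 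Using $p\leq p_c$, the assumption $(p_c(m-1))^{\tmix} = 1+O(\alpha_m)$ from \eqref{e:assump1} to absorb the factor accumulated up to step $\tmix$, and summing the resulting geometric series $\sum_{s\geq 0}(p(m-1))^{2s}$ (which is of order $\vep^{-1}$ because $p(m-1)=1-\vep+o(\vep)$), the total is of order $V^{-1}\vep^{-1}$. Combining with $\chi(p) = (1+o(1))\vep^{-1}$ from \eqref{e:chibds} and choosing constants generously gives the claimed bound $3V^{-1}\chi(p)$.

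For part \eqref{e:unifconnprob}, the strengthening ``off $A$'' is obtained by splitting any shortest open path of length exactly $t$ from $x$ to $y$ at the vertex $z$ it occupies at step $t-\tmix$. By definition $z \in \partial B(t-\tmix)$, and this first segment is determined by the exploration of $B(t-\tmix)$ off $A$. The second segment, from $z$ to $y$, has length exactly $\tmix$ and must be edge-disjoint from both $A$ and the witness edges of the first segment. Because $\{x \stackrel{=t}{\lrfill} y\}$ is not monotone, I would use BKR instead of BK to factor the two contributions: the first yields $\Ep[|\partial B(t-\tmix)\text{ off }A|]$ by the first moment method, and the second factor is, uniformly in $z$, bounded by the length-$\tmix$ SAW-to-NBW argument from part \eqref{e:unifconnprob2} applied off the enlarged edge set, giving at most $3V^{-1}$.

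The main obstacle is exactly this non-monotonicity of the shortest-path event: BK does not apply, so one must invoke BKR and be careful to specify disjoint witness sets for the two segments. The attendant conditioning scheme must also be set up so that, after revealing $B(t-\tmix)$ off $A$, the remaining randomness is genuinely an unconditional percolation on the complementary edge set and the right-hand side of \eqref{e:unifconnprob} is an honest $\Ep$, not a conditional expectation. This bookkeeping, performed in detail in \cite[Lemma~3.15]{HofNac12}, is the technically delicate step; the rest of the argument is the geometric-series and mixing-time calculation sketched above.
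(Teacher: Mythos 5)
The paper does not actually prove this lemma; it imports it from \cite{HofNac12}, explicitly stating ``We do not quote the precise statements from \cite{HofNac12}, but rather state only the consequences that we require in this paper'' and pointing to \cite[Section~3.4, Lemma~3.15]{HofNac12}. So there is no in-paper proof for me to compare against, but I can comment on the merits of your sketch on its own.

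Your sketch of \eqref{e:unifconnprob} has the right structure: decompose the shortest open path off $A$ at the vertex $z := \eta(t-\tmix) \in \partial B(t-\tmix)$ off $A$, condition on the exploration of that ball, and bound the remaining length-$\tmix$ segment by a SAW-to-NBW union bound, using \eqref{e:assump1} to absorb $(p(m-1))^{\tmix}$ and the definition \eqref{e:mnotdef} to get a $V^{-1}$ factor after the parity averaging. You are also right that the non-monotonicity of the $\{\cdot\stackrel{=t}{\lrfill}\cdot\}$ event is the delicate point, where witnesses must be assigned carefully (BKR or the off-method conditioning scheme rather than plain BK). That matches the paper's usage elsewhere.

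Your sketch of \eqref{e:unifconnprob2}, however, has a genuine gap. The geometric series $\sum_{s\ge 0}(p(m-1))^{2s}$ does \emph{not} converge when $p(m-1)\ge 1$, and this is not an edge case: by \eqref{e:pcestimate} we have $p_c(m-1)=1+O(m^{-2})$ with $p_c>1/(m-1)$ for the hypercube, so the series diverges already at $p=p_c$. More to the point of the paper, the lemma is used for $p=p_c(1-\vep)$ with $\vep$ ranging all the way down to just above $V^{-1/3}=2^{-m/3}$, which is far smaller than $m^{-2}$; your substitution $p(m-1)=1-\vep+o(\vep)$ tacitly assumes $\vep\gg m^{-2}$. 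In addition, the pairing step uses $\tfrac12(\p^t+\p^{t+1})\le (1+\alpha)V^{-1}$ for \emph{all} $t\ge\tmix$, whereas \eqref{e:mnotdef} defines $\tmix$ as a minimum and so only guarantees this at $t=\tmix$ without a separate monotonicity argument. The clean fix is to notice that \eqref{e:unifconnprob2} is an immediate corollary of \eqref{e:unifconnprob} with $A=\emptyset$, since
\[
\Pp\big(x \stackrel{\ge \tmix}{\lrfill} y\big)=\sum_{t\ge\tmix}\Pp\big(x\stackrel{=t}{\lrfill}y\big)\le 3V^{-1}\sum_{s\ge0}\Ep[|\partial B(s)|]=3V^{-1}\Ep[|\Ccal|]=3V^{-1}\chi(p),
\]
using that $\{\partial B(s)\}_{s\ge 0}$ partitions $\Ccal$. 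This sidesteps the geometric series entirely and I believe it is the route taken in \cite{HofNac12}.
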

The heuristics behind the above lemma are that when a graph satisfies \eqref{e:assump1} and \eqref{e:assump2} a long percolation path has similar properties to a simple random walk path.

The second estimate we need from \cite{HofNac12} is a non-backtracking random walk estimate bounding a particular sum of the heat kernel of graphs, like the hypercube, that satisfy \eqref{e:assump2}. Its proof is not difficult and can be found in the last paragraph in the proof of Theorem 4.5 of \cite{HofNac12}.

\begin{lemma}\label{lem:heatkernelbound} Consider the non-backtracking random walk kernel $\p$ on a transitive graph satisfying \eqref{e:assump2}. Then
\[
	 \sum_{v} \sum_{ t \in [2,\mnot], s \in [1,t]} s \p^s(0,v) \p^t(0,v) = O(\alpha_m / \log V) \, .
\]
\end{lemma}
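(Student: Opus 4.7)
The plan is to reduce the target sum to the triple sum appearing in \eqref{e:assump2} with $x = y = 0$, up to a bounded constant factor. The main ingredient is a sub-multiplicativity estimate for the non-backtracking kernel on an $m$-regular graph: for $s \ge 2$ and $1 \le t_1 \le s-1$,
\[
\p^s(0,v) \le \tfrac{m}{m-1}\sum_u \p^{t_1}(0,u)\, \p^{s-t_1}(u,v),
\]
which holds with trivial equality at $t_1 \in \{0, s\}$ (under the convention $\p^0(x,y) = \mathbb{1}_{\{x=y\}}$). This follows from a direct count: each NBW of length $s$ from $0$ to $v$ passing through $u$ at step $t_1$ determines a pair of NBWs of lengths $t_1$ and $s - t_1$ meeting at $u$, while the reverse concatenation produces an NBW unless it backtracks at the junction vertex. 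The constant $m/(m-1) \le 2$ arises from the ratio of the normalizing constants $m(m-1)^{s-1}$ for NBWs of length $s$ versus $m^2(m-1)^{s-2}$ for pairs.

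Writing $s = \sum_{t_1=1}^{s} 1$ and applying the above inequality term-by-term yields
\[
s\,\p^s(0,v) \le \tfrac{m}{m-1}\sum_{t_1=1}^{s}\sum_u \p^{t_1}(0,u)\, \p^{s-t_1}(u,v).
\]
Substituting this into the target sum and relabeling $t_2 = s - t_1$ and $t_3 = t$, the target sum is bounded above by
\[
\tfrac{m}{m-1}\sum_{u,v}\sum_{\substack{t_1 \ge 1,\, t_2 \ge 0,\, t_3 \ge 2\\ t_1 + t_2 \le t_3 \le \mnot}} \p^{t_1}(0,u)\,\p^{t_2}(u,v)\,\p^{t_3}(0,v).
\]
Since the reversal of a non-backtracking path is a non-backtracking path, the kernel on a regular graph is reversible, i.e.\ $\p^{t_3}(0,v) = \p^{t_3}(v,0)$, and the constraints imply $t_1 + t_2 + t_3 \ge 3$ with each index at most $\mnot$. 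The right-hand side is thus dominated by the sum in \eqref{e:assump2} specialized to $x = y = 0$, which is $O(\alpha_m / \log V)$ by assumption. The only step requiring any real work is the sub-multiplicativity estimate with a bounded constant; everything else is reindexing and an appeal to \eqref{e:assump2}.
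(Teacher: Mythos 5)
Your proof is correct, and the key idea — the sub-multiplicativity of the non-backtracking kernel with the bounded constant $m/(m-1)$ coming from the mismatch of normalizations $m(m-1)^{s-1}$ versus $m^2(m-1)^{s-2}$ — is the natural one. The paper itself does not prove the lemma but defers to the last paragraph of the proof of Theorem~4.5 of \cite{HofNac12}; your argument is self-contained and is almost certainly the same decomposition used there, since splitting the factor $s$ as $\sum_{t_1=1}^s 1$ and then splitting $\p^s$ at step $t_1$ is essentially the only way to produce a triple convolution of the form required by \eqref{e:assump2}.

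Two small points worth being explicit about. First, your reindexing produces a term with $t_2 = s - t_1 = 0$ (namely $t_1 = s$, forcing $u = v$). This is only covered by \eqref{e:assump2} if the implicit range there allows $t_i = 0$. This is almost certainly the intended reading (otherwise the constraint $t_1+t_2+t_3 \ge 3$ would be vacuous), but if one insists on $t_i \ge 1$, the diagonal $t_2 = 0$ contribution is handled by a second application of the same sub-multiplicativity estimate, splitting $\p^{t_3}(v,0)$ at its first step to produce a genuine $\p^{t_1}\p^{1}\p^{t_3-1}$ triangle. Second, the paper's definition of a ``$t$-step'' non-backtracking walk as a path $(X_1,\dots,X_t)$ is off by one from the way $\p^s$ is actually used elsewhere in the paper (e.g., ``the number of such $\gamma$'s is at most $m(m-1)^{s-1}\p^s(x,y)$'' for a path with $s$ edges); your convention, with $\p^0(x,y) = \mathbb{1}_{\{x=y\}}$ and $s$ edges for a length-$s$ walk, matches the paper's actual usage and is the right one for the normalization count you carry out.
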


\subsection{The expected volume of the boundary of a subcritical ball}
We prove the volume bound in Theorem~\ref{thm:generalmaxdiam}(b) in a slightly stronger version, allowing the bound to be ``off'' any arbitrary set of vertices.
\begin{theorem}\label{thm:subvolume}
Assume the setting of Theorem \ref{thm:generalmaxdiam}.
 There exists a constant $C>0$ such that for all integers $r =O(\vep^{-1} \log (\vep^3 V))$, we have
\begin{equation}\label{e:subvolind}
	\sup_{A \subset \Ecal} \, \E_{p} [\#\{v : 0 \stackrel{=r}{\lrfill} v \off A\}] \le C(1-\vep)^{r} \, .
 \end{equation}
\end{theorem}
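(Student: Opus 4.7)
The assertion to prove is a subcritical enhancement of the critical uniform boundary bound \eqref{e:critlayer} from \cite[Theorem 4.1]{HofNac12}: under the same assumptions \eqref{e:assump1}--\eqref{e:assump2} we now request an extra multiplicative factor $(1-\vep)^r$ reflecting the fact that every open edge on a length-$r$ witness path carries probability $p=p_c(1-\vep)$ rather than $p_c$. Accordingly, the plan is to mimic the HofNac12 proof of \eqref{e:critlayer}, carrying the factor $(1-\vep)^r$ through the non-backtracking random walk expansion. Writing $f_A(r) := \E_p[|\partial B_0(r) \off A|]$ and $f(r) := \sup_{A \subset \Ecal} f_A(r)$, I would proceed by induction on $r$.

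\emph{Base case ($r \le \tmix$).} Here I would use simple path counting. The number of simple non-backtracking walks of length $r$ starting at $0$ is at most $m(m-1)^{r-1}$, each open with probability $p^r$, so
\[
f_A(r) \le \tfrac{m}{m-1}(p(m-1))^r = \tfrac{m}{m-1}(p_c(m-1))^r(1-\vep)^r.
\]
Assumption \eqref{e:assump1} gives $(p_c(m-1))^r \le (p_c(m-1))^{\tmix} = 1+O(\alpha_m)$ for $r \le \tmix$, so $f(r) \le (1+o(1))(1-\vep)^r$, uniformly in $A$.

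\emph{Inductive step ($r > \tmix$).} For $v \in \partial B_0(r)$ the shortest simple open path $\eta$ of length $r$ from $0$ to $v$ can be split at $u=\eta(\tmix)$, producing edge-disjoint witnesses for $\{u \in \partial B_0(\tmix) \off A\}$ and $\{v \in \partial B_u(r-\tmix) \off A'\}$, where $A'$ augments $A$ by the witness edges of the first event. A BKR-style bound adapted to the non-monotone events $\{\partial B(s) \neq \emptyset\}$ via the off-method (in the style of \cite[Section 3.4]{HofNac12}, and leveraging Lemma~\ref{lem:unifconn} for the long piece) yields the sub-multiplicative recursion
\[
f_A(r) \le f_A(\tmix) \cdot \sup_{A'} f_{A'}(r-\tmix) \le f(\tmix)\,f(r-\tmix).
\]
Iterating gives $f(r) \le f(\tmix)^{\lfloor r/\tmix \rfloor} \cdot f(r \bmod \tmix) \le (1+o(1))^{r/\tmix}(1-\vep)^r$, the remainder term being bounded by the base case.

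\emph{Main obstacle.} The real difficulty is keeping the compound error $(1+o(1))^{r/\tmix}$ bounded over the full range $r = O(\vep^{-1}\log(\vep^3V))$, since the naive per-iteration loss of $1+O(\alpha_m+m^{-1})$ raised to $r/\tmix$ is not automatically $O(1)$ once $\vep \ll \tmix/r$. The remedy, which is the heart of the proof, is to avoid iterating a crude sub-multiplicative bound and instead apply the triangle-type estimate \eqref{e:assump2} directly, via Lemma~\ref{lem:heatkernelbound}. The extra factor $1/\log V$ in the bound $O(\alpha_m/\log V)$ is precisely what absorbs the $O(\log V)$ inductive iterations needed to reach the target range, so that the aggregate error is only $O(\alpha_m)=o(1)$. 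The off-method is what allows this error control to be uniform in $A \subset \Ecal$: each recursive step propagates the set $A$ correctly, and the non-monotonicity of $\partial B(r)$ never poses a BKR-style obstruction because we only ever compare witnesses in the smaller graph $G \setminus A$.
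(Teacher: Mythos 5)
Your proposal correctly identifies the role of Lemma~\ref{lem:heatkernelbound} and the need for induction, but the concrete strategy has a genuine gap, and your stated remedy does not close it. You set up a sub-multiplicative recursion $f(r)\le f(\tmix)\,f(r-\tmix)$ by splitting the geodesic at position $\tmix$. Each invocation of the base case loses a factor $1+O(\alpha_m)$, so the accumulated error after $r/\tmix$ steps is $(1+O(\alpha_m))^{r/\tmix}$. You acknowledge this obstacle, but your proposed fix (``apply \eqref{e:assump2} directly, the $1/\log V$ absorbs the $O(\log V)$ iterations'') misidentifies the number of iterations and does not mesh with the structure you built. The number of iterations is $r/\tmix$, which can be enormous: on the hypercube $\tmix=O(m\log m)$ while $r$ can be exponential in $m$ (take $\vep=V^{-1/4}$, say), yet $\alpha_m$ is only $o(1)$ and at best polynomial in $1/m$, so $(1+\Theta(\alpha_m))^{r/\tmix}$ diverges. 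Lemma~\ref{lem:heatkernelbound} cannot rescue an iterative scheme of this shape; the factor $1/\log V$ there pays for a single sum over positions on $\eta$, not a chain of $r/\tmix$ recursive losses.

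The paper sidesteps this by a coupling with critical percolation that your proposal does not use. Replace each edge by a parallel pair in $\Ecal_1$ (open with probability $p_1=p_c(1-\vep)$) and $\Ecal_2$ (open with probability $q$ chosen so that $\Ecal_1\cup\Ecal_2$ reproduces $p_2=p_c$). Up to a low-probability exception, the first $p_1$-open geodesic $\eta$ of length $r$ is also the first $p_c$-open geodesic of length $r$ with every edge $\Ecal_1$-open; conditionally on the latter, the $\Ecal_1$-openness of $\eta$ has probability exactly $(1-\vep)^r$, so the main term reduces to $(1-\vep)^r\,\E_{p_c}[\#\{v:0 \stackrel{=r}{\lrfill} v \off A\}]\le(C_b+o(1))(1-\vep)^r$ via \eqref{e:critlayer}, acquiring the critical constant once rather than iteratively. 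The exceptional event involves a single shortcut $\gamma$ between two vertices of $\eta$ that must traverse an $\Ecal_2$-edge, which yields a factor of order $\vep\,|\gamma|$; summing over the position of the shortcut on $\eta$ gives a factor $r$, and the $O(\alpha_m/\log V)$ from Lemma~\ref{lem:heatkernelbound} absorbs $\vep r=O(\log(\vep^3 V))$. The induction hypothesis is invoked only \emph{inside} this additive error term (to bound $\E_p[\#\{v:0 \stackrel{=k}{\lrfill} v \off A\}]$ for $k<r$), contributing $o((1-\vep)^r)$, so errors do not compound. You should rebuild the argument around this coupling rather than the $\tmix$-step iteration.
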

\proof
We prove the claim by induction on $r$.
 The induction hypothesis is that \eqref{e:subvolind} holds for any integer $k < r$.
The induction is initialized by choosing $C$ sufficiently large.

We start by setting up a coupling that allows us to use the BKR-inequality.
Let $G' = (\Vcal, (\Ecal_1, \Ecal_2))$ be the multigraph with a pair of edges $e_1 \in \Ecal_1$ and $e_2 \in \Ecal_2$ between $v,w \in \Vcal$ iff $\{v,w\} \in \Ecal$ (i.e., we take $G$ and replace each edge by a pair of parallel edges).
Put $p_1=p_c(1-\vep)$ and $p_2 = p_c$.
Independently of everything else, we declare each edge in $\Ecal_1$ open with probability $p_1$ and each edge in $\Ecal_2$ to be open with probability $q$, where $q\in[0,1]$ is determined by
\[
 (1-q)(1-p_1) = 1-p_2 \, .
\]
We write $\P^A$ for the associated product measure off $A$ (i.e., all edges in $\Ecal_1$ and $\Ecal_2$ corresponding to some edge in $A$ are closed).
We say that an edge $e \in \Ecal$ is $p_1$-open iff $e_1$ is open, and that $e$ is $p_2$-open iff at least one of $e_1$ or $e_2$ are open.
For $i=1,2$ we write $G_{p_i}$ for the graph spanned by the $p_i$-open edges. Note that the marginal law of $G_{p_i}$ is $\P_{p_i}$.

For an integer $r \geq 0$ we define
\[
    \A_{r,p_2}(v) := \big \{ 0 \stackrel{=r}{\lrfill} v \text{ in }G_{p_2} \big \},
\]
and given a simple path $\eta$ in $G$ from $0$ to $v$ of length $r$ we define
\[
    \A_{r, p_2}(v,\eta) := \big \{ \eta \text{ is the first $p_2$-open shortest path of length $r$ from $0$ to $v$} \big \} \, ,
\]
so that $\A_{r,p_2}(v) = \uplus_{\eta} \A_{r, p_2}(v,\eta)$. We also define $\Bcal_{r,p_1}(v,\eta)$ to be the event that the edges of $\eta$ are $p_1$-open. It follows that
\[
	 \biguplus_\eta \big(\A_{r, p_2}(v,\eta) \cap \Bcal_{r,p_1}(v,\eta)\big) \subseteq \big \{ 0 \stackrel{=r}{\lrfill} v \text{ in }G_{p_1} \big \} \, .
\]
We will show using the induction hypothesis that
\begin{equation} \label{e:subvolumegoal}
	\supa \sum_{v} \P^A \Big( \big\{0 \stackrel{=r}{\lrfill} v \text{ in }G_{p_1} \big\}\setminus \uplus_\eta \big(\A_{r, p_2}(v,\eta) \cap \Bcal_{r,p_1}(v,\eta)\big) \Big) = o((1-\vep)^r) \, .
\end{equation}
This establishes the proof, since then
\[
	 \supa \E_{p}[\# \{ v :  0 \stackrel{=r}{\lrfill} v \off A\}] \leq  \supa \sum_v \sum_\eta  \P^A \big(\A_{r, p_2}(v,\eta) \cap \Bcal_{r,p_1}(v,\eta)\big) + o((1-\vep)^r) \, ,
\]
while
\[
	\P^A \big(\Bcal_{r,p_1}(v,\eta) \mid \A_{r, p_2}(v,\eta)\big) = (1-\vep)^r
\]
whenever $\P^A(\A_{r, p_2}(v,\eta))>0$, so that
\[\begin{split}
	\supa \sum_v \sum_\eta \P^A \big(\A_{r, p_2}(v,\eta) \cap \Bcal_{r,p_1}(v,\eta)\big) &= (1-\vep)^r \supa \sum_v \P^A \big(0 \stackrel{=r}{\lrfill} v \text{ in }G_{p_2} \big)\\
 & \leq (C_b+o(1)) (1-\vep)^r \, ,
\end{split}
\]
where $C_b$ is the constant from \eqref{e:critlayer}.
\medskip

It remains to prove \eqref{e:subvolumegoal}. Fix a set $A \subset \Ecal$. To start, we assume that the event
\begin{equation}\label{e:subvolevent}
	\big\{0 \stackrel{=r}{\lrfill} v \text{ in }G_{p_1} \big\} \setminus \biguplus_{\eta'} \big(\A_{r,p_2}(v,\eta') \cap \Bcal_{r,p_1}(v,\eta')\big)
 \end{equation}
occurs off $A$, and that $\eta$ is the first shortest $p_1$-open path connecting $0$ to $v$. Since $\A_{r, p_2}(v,\eta) \cap \Bcal_{r,p_1}(v,\eta)$ does not occur, we deduce that either
\begin{enumerate}
	\item the shortest $p_2$-open path connecting $0$ to $v$ has length less than $r$, or
	\item that both are of length at least $r$ but \emph{the first} shortest $p_2$-open path uses an edge that belongs to $\Ecal_2$.
\end{enumerate}
Both cases imply that there are vertices $x$ and $y$ on $\eta$ such that the length of $\eta$ between them is some $t\leq r$ and there exists a $p_2$-open path $\gamma$ between them with $\gamma \cap \eta = \{x,y\}$ and $|\gamma| \leq t$, and $\gamma$ contains at least one edge of $\Ecal_2$. See Figure \ref{fig:Pf43}.
\begin{figure}
	\includegraphics[width = 0.75\textwidth]{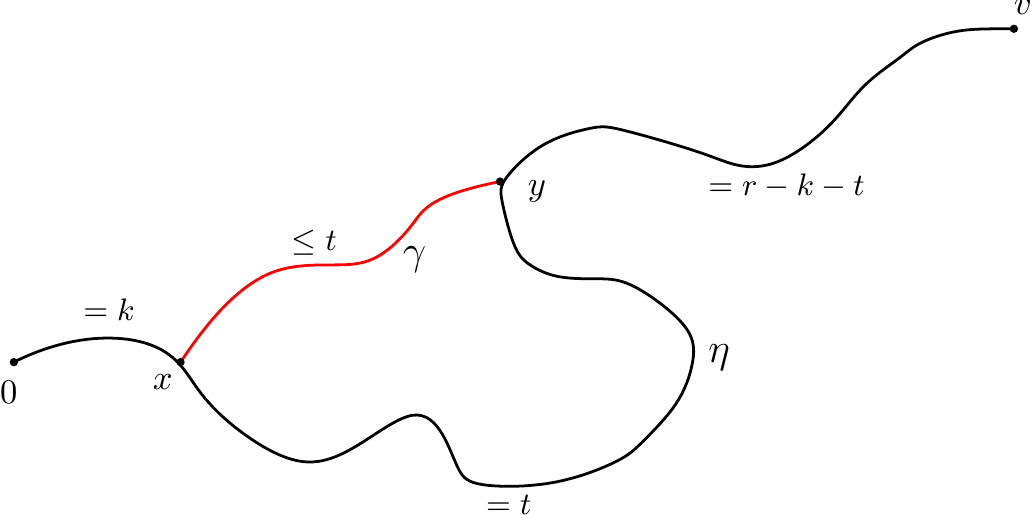}
	\caption{\label{fig:Pf43} The event implied by $\big\{0 \stackrel{=r}{\lrfill} v \text{ in }G_{p_1} \big\} \setminus \biguplus_{\eta'} \big(\A_{r,p_2}(v,\eta') \cap \Bcal_{r,p_1}(v,\eta')\big)$. Here the black path is $\eta$ and the red path is $\gamma$, which passes through at least one edge of $\Ecal_2$.}
\end{figure}
Hence, the event \eqref{e:subvolevent} implies that there exists non-negative integers $k,t$ satisfying $k+t \leq r$ and vertices $x,y$ such that the following two events occur disjointly:
\[\begin{split}
	\Lcal_1(v,x,y,k,t) &:= \Big\{ \big\{0 \stackrel{=k}{\lrfill} x  \big\} \cap \big\{x \stackrel{=t}{\lrfill} y \off B_0(k) \big\} \cap \big\{y \stackrel{=r-k-t}{\lrfill} v \off  B_{0}(k+t)\big\} \text{ in } G_{p_1} \Big\},\\
\Lcal_2(x,y,t) & := \big\{\exists \gamma \, : \, \gamma \text{ is  a } p_2\text{-open path, $|\gamma| \le t$,  $\gamma(0) =x$,  $\gamma(|\gamma|) =y$, } \gamma \cap \Ecal_2 \neq \emptyset  \big\}.
\end{split}
\]
Indeed, the witness edges for $\Lcal_1$ are the $p_1$-open edges of $\eta$ together with all the closed edges of $\Ecal_1$, and the witness edges for $\Lcal_2$ are the open edges of $\gamma$. Denote the event of the disjoint occurrence of $\Lcal_1$ and $\Lcal_2$ by $\F(v,x,y,k,t)$. We will prove \eqref{e:subvolumegoal} by summing the probability of $\F(v,x,y,k,t)$ over $v,x,y,k,$ and~$t$.
\medskip

We split the sum according to whether $t < \mnot$ or $t\geq \mnot$, starting with the latter.
\medskip

Applying the BKR-inequality and using the inclusion $\Lcal_2(x,y,t) \subseteq \{x \stackrel{\le t}{\lrfill} y$ in $G_{p_2}\}$ we bound
\[
	\P^A (\F(v,x,y,k,t))  \leq \P_{p_c}\big(x \stackrel{\le t}{\lrfill} y\big) \P^A\big(\Lcal_1(v,x,y,k,t)\big).
\]
(We dropped the condition ``off $A$'' for the first factor because the event is increasing.)

We proceed by bounding $\P^A(\Lcal_1(v,x,y,k,t))$. We condition on the open and closed edges that determine $B_0(k+t)$, as described in Section \ref{sec:offmethod}, and use the induction hypothesis to get
\[
	\sum_v  \P^A (\Lcal_1(v,x,y,k,t)) \le C (1-\vep)^{r-k-t} \Pp^A \big(0 \stackrel{=k}{\lrfill} x , x \stackrel{=t}{\lrfill} y \text{ off } B_0(k) \big).
\]

We condition similarly on the closed and open edges that determine $B_0(k)$, and since $t \geq \mnot$ and we assume that \eqref{e:assump1} and \eqref{e:assump2} hold, we may use \eqref{e:unifconnprob} and the induction hypothesis to bound
\[
	 \sum_v  \P^A (\Lcal_1(v,x,y,k,t)) \leq C V^{-1}(1-\vep)^{r-k} \P_{p}^A \big(0 \stackrel{=k}{\lrfill} x  \big) \, .
\]
Thus,
\[
	 \sum_v \P^A(\F(v,x,y,k,t)) \leq C V^{-1}(1-\vep)^{r-k} \P_{p}^A \big(0 \stackrel{=k}{\lrfill} x  \big)\P_{p_c}\big(x \stackrel{\le t}{\lrfill} y \big) \, .
\]
We now sum the last term over $y$ and get a factor $O(t)$ by \eqref{e:critvolball}.
We then sum the one before last term over $x$ and get a factor $C(1-\vep)^k$ by the induction hypothesis. Finally, we sum over $k,t \leq r$ and get a factor $O(r^3)$, obtaining
\[
	 \sum_{v,x,y} \sum_{k,t \geq \mnot} \P^A(\F(v,x,y,k,t))\leq C (1-\vep)^r V^{-1} r^3.
\]
Since $r = o(V^{1/3})$, we get that this sum is $o((1-\vep)^r)$ for any fixed $A \subset \Ecal$, as required.
\medskip

We now bound in the case that $t \in [2,\mnot]$. Again we start by applying the BKR-inequality to the probability of $\F(v,x,y,k,t)$. This time we bound the probability of $\Lcal_2$ by enumerating over paths. Indeed, $\Lcal_2(x,y,t)$ implies that there exists a path $\gamma$ with $|\gamma|\leq t$ such that $\gamma$ is a $p_2$-open path between $x$ and $y$ such that one of its edges belongs to $\Ecal_2$. For each such simple path $\gamma$ of length $s \leq t$ the probability that this occurs is precisely
\[
	 p_2^{s} (1-(p_1/p_2)^{s}) \, ,
\]
and the number of such $\gamma$'s is at most $m(m-1)^{s-1}\p^s(x,y)$. Hence
\[
	 \Ppc(\Lcal_2(x,y,t)) \leq \sum_{s \leq t} p_c^{s} (1-(1-\vep)^{s}) m(m-1)^{s-1}\p^s(x,y)
	 \leq C \vep \sum_{s \leq t} s \p^s(x,y) \, ,
\]
where in the last inequality we used that $(1-(1-\vep)^s) \leq s\vep$ and that $p_c^s m(m-1)^{s-1}=1+o(1)$ by \eqref{e:assump1}.

For the probability of $\Lcal_1$, we first sum over $v$ as before to get a factor $(1-\vep)^{r-k-t}$. Afterwards, we condition on the closed and open edges that determine $B_0(k+t)$ and bound the conditional probability of $x \stackrel{=t}{\lrfill} y$ by $C(1-\vep)^t\p^t(x,y)$ as before, by enumerating paths and using \eqref{e:assump1}. We gained the factor $(1-\vep)^t$ relative to the estimate of $\Ppc(\Lcal_2(x,y,t))$, because the event $x \stackrel{=t}{\lrfill} y$ occurs on $G_{p_1}$, where the percolation probability is $p_1 = p_c(1-\vep)$.
 We get that
\[
	 \sum_{v,x,y} \sum_{k, t \leq \mnot} \P^A (\F(v,x,y,k,t))
	 \leq C \vep \sum_{x,y} \sum_{k,t \leq \mnot, s\leq t} (1-\vep)^{r-k} \P_{p}^A \big(0 \stackrel{=k}{\lrfill} x \big) s \p^s(x,y)\p^t(x,y) \, .
\]
By Lemma \ref{lem:heatkernelbound} we may sum $s \p^s(x,y)\p^t(x,y)$ over $s,t,y$ and get a factor $O(\alpha_m/\log V)$. We then sum over $x$ using the induction hypothesis to get a factor $C(1-\vep)^k$. Finally we sum over $k$ and get a factor $r$. This yields
\[
	 \sum_{v,x,y} \sum_{k, t \leq \mnot} \P^A (\F(v,x,y,k,t))\leq C (1-\vep)^r \frac{\vep r \alpha_m}{ \log V} \, .
\]
Now, since $r = O(\vep^{-1} \log (\vep^3 V))$ and $\alpha_m = o(1)$ we get that this is also $o((1-\vep)^r)$ for any fixed $A$, as required. \qed

\subsection{The subcritical one-arm probability}
The next theorem gives the sharp estimate on the subcritical one-arm probability in Theorem~\ref{thm:generalmaxdiam}(b) (again, in the slightly stronger form allowing it to be ``off'' any arbitrary set). The proof is of similar nature to the proof of the previous theorem but is not quite analogous, because here the case $t \geq \mnot$ gives rise to a technical difficulty when $\vep$ is very close to $V^{-1/3}$.

Note also that Theorem~\ref{thm:generalmaxdiam}(b) is not entirely sharp, as it does not meet the lower bound of Lemma \ref{lem:subdiam}(a). However, we only use this theorem with $r$ of order $\vep^{-1} \log(\vep^3 V)$, so the ratio between the lower and the upper bound is at most $\log(\vep^3 V)$ and this logarithmic difference should, in practice, not matter much. Our bounds can be improved to give
the sharpest upper bound of order $r^{-1} (1-\vep)^r$, but this seems to require longer technical work and is unnecessary for our purposes, so we omit it. (The current proof actually gives an upper bound of $\vep (1-\vep)^r / \log m$, but we also do not spell out the details for this.)

\begin{theorem}\label{thm:subdiam}
Assume the setting of Theorem \ref{thm:generalmaxdiam}.
There exists a constant $C>0$ such that for all integers $r$ satisfying $\vep^{-1} \le r =O(\vep^{-1} \log (\vep^3 V))$, we have
\begin{equation}\label{e:subonearm}
 	\sup_{A \subset \Ecal} \P_{p}(\partial B(r) \neq \emptyset \off A) \le C\vep(1-\vep)^{r}.
\end{equation}
\end{theorem}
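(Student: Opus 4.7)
The plan is to deduce Theorem~\ref{thm:subdiam} from Theorem~\ref{thm:subvolume} via a one-step ``semigroup'' inequality, with Lemma~\ref{lem:subdiam}(b) serving as the base case. For $r \ge 0$ define
\[
q(r) := \sup_{A \subset \Ecal} \Pp^{A}\bigl(\partial B(r) \neq \emptyset\bigr),
\]
the goal being $q(r) \le C\vep(1-\vep)^{r}$ for all $r$ with $\vep^{-1} \le r = O(\vep^{-1}\log(\vep^3 V))$.

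The main step is to prove
\begin{equation}\label{e:planiter}
q(r+s) \,\le\, \Bigl(\sup_{A \subset \Ecal} \E_p^{A}[|\partial B(r)|]\Bigr)\, q(s) \,\le\, C(1-\vep)^{r}\, q(s),
\end{equation}
the second step being Theorem~\ref{thm:subvolume}. To see the first, fix $A \subset \Ecal$ and carry out the usual off-method exploration of $B_0(r)$ under $\Pp^{A}$; let $W$ be the random set of witness edges read off during the exploration, namely the open edges on paths of length at most $r$ from $0$ together with closed edges incident to $B_0(r-1)$. Any geodesic of length $r+s$ from $0$ passes through some $u \in \partial B_0(r)$, and its continuation from $u$ has length $s$ and uses only edges with both endpoints at distance $\ge r$ from $0$, hence edges outside $W \cup A$. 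Therefore
\[
\{\partial B_0(r+s) \neq \emptyset\} \subseteq \bigcup_{u \in \partial B_0(r)} \{\partial B_u(s) \neq \emptyset \off W\}.
\]
Since $W \cup A$ is measurable with respect to the exploration $\sigma$-algebra $\Fcal_r$ and the edges outside $W \cup A$ are, conditionally on $\Fcal_r$, an independent $p$-percolation, transitivity of $G$ yields $\Pp^{A}(\partial B_u(s) \neq \emptyset \off W \mid \Fcal_r) = \Pp^{W \cup A}(\partial B_u(s) \neq \emptyset) \le q(s)$ for every $u$. A union bound over $u \in \partial B_0(r)$, taking expectation under $\Pp^{A}$, and then $\sup_A$, produces \eqref{e:planiter}.

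The base case is $q(2\vep^{-1}) \le \vep e^{-2c_4}$: Lemma~\ref{lem:subdiam}(b) with $A=2$ gives $\Pp(\partial B(2\vep^{-1}) \neq \emptyset) \le \vep e^{-2c_4}$, and $\{\partial B(r) \neq \emptyset\}$ being increasing implies $\Pp^{A'} \le \Pp$ for every $A' \subset \Ecal$. Iterating \eqref{e:planiter} once with $s = 2\vep^{-1}$, any $r \ge 2\vep^{-1}$ in the allowed range satisfies
\[
q(r) \,\le\, C(1-\vep)^{r-2\vep^{-1}}\, \vep e^{-2c_4} \,\le\, C''\vep(1-\vep)^{r},
\]
since $(1-\vep)^{-2\vep^{-1}} = O(1)$ as $\vep \to 0$. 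For $\vep^{-1} \le r \le 2\vep^{-1}$, monotonicity of $q$ in $r$ together with the critical one-arm bound \eqref{e:critonearm} (valid for $p \le p_c$ and off any subgraph) gives $q(r) \le q(\vep^{-1}) \le C_2 \vep$, while $(1-\vep)^{r}$ is bounded away from $0$ on this short interval; hence $q(r) \le C''\vep(1-\vep)^{r}$ there as well.

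The main (mild) technical point is the off-set bookkeeping during the exploration: after revealing $W$, one must verify that the unexplored edges really do form an independent $p$-percolation off $W \cup A$, so that the inner probability can be dominated uniformly by $q(s)$. The authors' own proof instead re-runs the parallel-edge coupling of Theorem~\ref{thm:subvolume} and carefully handles the long-step case $t \ge \mnot$ when $\vep$ is very close to $V^{-1/3}$. Since Theorem~\ref{thm:subvolume} has already been established throughout the required range of $r$, however, the semigroup reduction above is sufficient and sidesteps that subtlety altogether.
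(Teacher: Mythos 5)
Your semigroup reduction is a genuinely different and substantially cleaner route than the paper's. The authors prove Theorem~\ref{thm:subdiam} by running its own parallel-edge-coupling induction (independently of, though using, Theorem~\ref{thm:subvolume}), splitting into four cases according to $t \gtrless \tmix$ and $r-k-t \gtrless \vep^{-1}$. Your approach instead observes that the one-arm probability at radius $r+s$ decomposes, via a BFS exploration of $B_0(r)$, into an expected-boundary factor times a shifted one-arm probability, and then lets Theorem~\ref{thm:subvolume} do all the heavy lifting in a single application. The iteration inequality~\eqref{e:planiter} is correct: the edges of the continuation $\gamma=\pi[r,r+s]$ of a geodesic $\pi$ indeed have both endpoints at distance $\geq r$, hence lie outside the explored set; the off-set calculus ensures $d_{\omega_W}(u,v)\geq d_\omega(u,v)=s$ so $v\in\partial B_u^{\omega_W}(s)$; and transitivity gives the uniform bound $\Pp^{W\cup A}(\partial B_u(s)\neq\emptyset)\leq q(s)$.

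There is, however, a concrete error in your stated base case. You write that ``$\{\partial B(r)\neq\emptyset\}$ being increasing implies $\Pp^{A'}\leq\Pp$''. The event $\{\partial B(r)\neq\emptyset\}$ is \emph{not} increasing --- the paper points this out explicitly in Section~\ref{sec:prelim}: adding an open edge can create a shortcut that empties $\partial B(r)$, and removing edges (going off $A'$) can push a vertex out to distance exactly $r$. Indeed the entire reason the theorem is phrased with a supremum over $A$ (and the reason Theorem~\ref{thm:subdiamoffA} exists) is that this monotonicity fails. As written, $q(2\vep^{-1})\leq\Pp(\partial B(2\vep^{-1})\neq\emptyset)$ is unjustified and Lemma~\ref{lem:subdiam}(b) alone does not give what you need. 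Fortunately the fix is already implicit in your last sentence: take the base case to be $q(\vep^{-1})\leq C_2\vep$, which follows directly from \eqref{e:critonearm} --- that bound is stated with a supremum over subgraphs $G'\subseteq G$ (equivalently, off any $A$) and holds at all $p\leq p_c$. One application of \eqref{e:planiter} with $s=\vep^{-1}$ then gives $q(r)\leq C(1-\vep)^{r-\vep^{-1}}C_2\vep\leq C''\vep(1-\vep)^r$ for all $r\geq\vep^{-1}$ in range, since $(1-\vep)^{-\vep^{-1}}=O(1)$. With that replacement the argument is sound.
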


\proof
We again prove the claim by induction. Our induction hypothesis is that \eqref{e:subonearm} holds for any $k$ satisfying  $\vep^{-1} \le k < r$. The induction is initialized by observing that for $r=\vep^{-1}$ the claim follows from \eqref{e:critonearm}.

As in the proof of the previous theorem, we start by constructing the multigraph $G'$ that is a copy of $G$ with each edge replaced with a pair of edges subject to different percolation probabilities, $p_1$ on $\Ecal_1$ and $q$ on $\Ecal_2$, where $q$ is the solution to $(1-q)(1-p_1) =1-p_2$. Also as in the previous proof, we put $p_1=p_c(1-\vep)$ and $p_2 = p_c$. We use the terms ``$p_1$-open'' and ``$p_2$-open'' as before, and write $G_{p_i}$ for the subgraph of $G'$ of $p_i$-open edges.

Define for $i=1,2,$
\[
    \A_{r, p_i} := \big \{ \partial B_0(r) \neq \emptyset \text{ in }G_{p_i}\big \} \, ,
\]
and given a simple path $\eta$ in $G$ of length $r$ we write
\[
    \A_{r, p_i} (\eta) := \big \{ \eta \text{ is the lexicographical first $p_i$-open shortest-path of length $r$ starting at $0$} \big \} \, ,
\]
so that $\A_{r, p_i} = \uplus_{\eta} \A_{r,p_i}(\eta)$. We also write $\Bcal_{r,p_1}(\eta)$ for the event that the edges of the path $\eta$ are $p_1$-open. Note that
\[
	 \biguplus_\eta \big(\A_{r,p_2}(\eta) \cap \Bcal_{r,p_1}(\eta)\big) \subseteq \A_{r,p_1} \, .
\]
We will use the induction hypothesis to show that
\begin{equation} \label{e:onearmgoal}
	\supa \P^A \Big ( \A_{r,p_1} \setminus \uplus_\eta \big(\A_{r,p_2}(\eta) \cap \Bcal_{r,p_1}(\eta)\big) \Big ) = o(\vep (1-\vep)^r) + o\big(\supa \P^A(\A_{r,p_1})\big) \, .
\end{equation}
Given \eqref{e:onearmgoal} the proof can be quickly completed since we have
\[
	\supa \P^A(\A_{r,p_1}) \leq  \supa \sum_\eta \P^A \big(\A_{r,p_2}(\eta) \cap \Bcal_{r,p_1}(\eta) \big) + o(\vep(1-\vep)^r) + o \big(\supa \P^A(\A_{r,p_1})\big)  \, ,
\]
and
\[
	\P^A \big(\Bcal_{r,p_1}(\eta) \mid \A_{r,p_2}(\eta)\big) = (1-\vep)^r
\]
whenever $\P^A(\A_{r,p_2}(\eta))>0$, so that
\[
	 \supa \P^A(\A_{r,p_1}) \leq (1-\vep)^{r}  \supa \P_{p_c}^A(\partial B_r(0) \neq \emptyset) + o(\vep(1-\vep)^r) + o\big(\supa \P^A(\A_{r,p_1})\big) \, ,
\]
which concludes the proof using \eqref{e:critonearm} since $r \geq \vep^{-1}$.
\medskip

We now turn to proving \eqref{e:onearmgoal}. Assume that the event
\begin{equation}\label{e:onearmgoalevent}
	\A_{r,p_1} \setminus \biguplus_{\eta'} \big(\A_{r,p_2}(\eta') \cap \Bcal_{r,p_1}(\eta') \big)
\end{equation}
occurs and let $\eta$ be the first $p_1$-open shortest path of length $r$ starting at $0$. Since $\A_{r,p_2}(\eta) \cap \Bcal_{r,p_1}(\eta)$ does not occur, it follows that either
\begin{enumerate}
	\item there is a $p_2$-open path between the endpoints of $\eta$ of length less than~$r$, or
	\item there exists a $p_2$-open path of length $r$ connecting the two endpoints of $\eta$ that is lexicographically prior to $\eta$.
\end{enumerate}
Fix a set $A \subset \Ecal$.

Let $\Vcal(\eta)$ denote the set of the $r+1$ vertices on the path $\eta$. Both cases (i) and (ii) imply that there exist vertices $u,v \in \Vcal(\eta)$ that are connected by a $p_2$-open path $\gamma$ that is disjoint from $\eta$, and additionally, that this path has at least one edge that is $p_1$-closed but $p_2$-open. We write $\F^{\ge} (\eta)$ for the event that there exists such a $\gamma$ with $|\gamma| \geq \mnot$ and by $\F^{\le}(\eta)$ the event that all such $\gamma$'s have length less than $\mnot$. Our goal is to bound from above the probability of $\uplus_\eta \big( \F^{\ge} (\eta) \cup \F^{\le}(\eta)\big)$ by the right-hand side of \eqref{e:onearmgoal}.
\medskip

\label{page:Fproof1}
We start with $\F^{\ge}(\eta)$, which is simpler to analyse. Here we drop the requirement that one of the edges of $\gamma$ is $p_1$-closed. The event $\F^{\ge}(\eta)$ implies that there exists $x,y \in \Vcal(\eta)$ such that the events
\begin{enumerate}
\item $\eta$ is the first $p_1$-open shortest path of length $r$ starting from $0$, and
\item there exists a $p_2$-open path $\gamma$ from $x$ to $y$ of length at least $\mnot$ that is disjoint from $\eta$,
\end{enumerate}
occur disjointly. Indeed, the witness set for the first event is the set of edges of $\eta$ and all the $p_1$-closed edges (the closed edges determine that $\eta$ is the first shortest $p_1$-open path), and the witness set for the second event is the set of (open) edges of $\gamma$. These witness sets are disjoint by construction. By the BKR-inequality and the union bound we get
\[
	 \P^A \big(\F^{\ge} (\eta) \big) \leq \P^A \big(\A_{r,p_1}(\eta) \big) \sum_{x,y \in \Vcal(\eta)} \P^A \big(x \stackrel{\geq \mnot}{\lrfill} y \big) \, .
\]
By \eqref{e:unifconnprob2} and \eqref{e:chibds},
\[
	 \P^A \big(\F^{\ge} (\eta)\big) \leq \P^A \big(\A_{r,p_1}(\eta)\big) \, |\Vcal(\eta)|^2 \, C V^{-1} \vep^{-1} \leq C \P^A \big(\A_{r,p_1}(\eta) \big) V^{-1} r^2 \vep^{-1} \, ,
\]
and since $V^{-1} r^2 \vep^{-1} = o(1)$ by the assumptions on $r$ and $\vep$, we get that
\[
	 \supa \P^A \big (\uplus_\eta \F^{\ge } (\eta) \big ) \leq  \supa \P_{p}^A(\partial B_r(0) \neq \emptyset) \cdot o(1) \, ,
\]
corresponding to the last term on the right-hand side of \eqref{e:onearmgoal}.
\medskip

\label{page:Fproof}
To bound the probability of $\uplus_\eta \F^{\le}(\eta)$ we observe that this union implies that there exist non-negative integers $k,t, \ell$ with $k+t \leq r,\ell \leq \mnot$ and vertices $x,y$ such that the following two events occur disjointly:
\[\begin{split}
	\Mcal_1(x,y,k,t) & := \big\{\{ 0 \stackrel{=k}{\lrfill} x\} \cap \{x \stackrel{=t}{\lrfill} y \off B_0(k)\}\\
	& \qquad \qquad \qquad \qquad  \cap \{\partial B_y (r-k-t) \neq \emptyset \off B_0(k+t)\} \text{ in } G_{p_1}\big\},\\
	\Mcal_2(x,y,\ell) & := \big\{ \exists \gamma \, : \,  \gamma \text{ is a $p_2$-open path,} |\gamma| =\ell, \gamma(0) = x, \gamma(\ell) =y, \gamma \cap \Ecal_2 \neq \emptyset \big\}.
\end{split}\]
Indeed, as before, the witness set for the first event is $\eta$ and all the $p_1$-closed edges, while the witness set for the second event are the edges of $\gamma$. These witness sets are again disjoint. The BKR-inequality gives
\begin{equation}\label{e.onearmboundevent}
	\P^A \big (\uplus_\eta \F^{\le}(\eta) \big ) \leq \sum_{x,y} \sum_{\substack{k,t,\ell\leq\mnot \\ k+t \leq r}} \P^A (\Mcal_1(x,y,k,t)) \P^A (\Mcal_2(x,y,\ell)) \, .
\end{equation}

To bound the probability of the second event, we enumerate all the possible $\gamma$'s. The number of such $\gamma$'s is at most $m(m-1)^{\ell-1}\p^\ell(x,y)$ and the probability that $\gamma \cap \Ecal_2 \neq \emptyset$ is precisely $p_2^{\ell}(1-(p_1/p_2)^\ell)$. Since $(p_2(m-1))^\ell = 1+o(1)$ when $\ell \le \tmix$ by \eqref{e:assump1}, and since $(1-(p_1/p_2)^\ell)\leq \vep \ell$ we bound
\begin{equation} \label{e.onearmbound2}
	\P^A(\Mcal_2(x,y,\ell)) \leq C \vep \ell \p^\ell(x,y) \, .
\end{equation}
To bound the first term in the sum on the right-hand side of \eqref{e.onearmboundevent} we condition on the open and closed edges that determine $B_0(k+t)$ using the same approach as in the proof of Theorem \ref{thm:subvolume} above. Afterwards we condition on the open and closed edges of $B_0(k)$ and proceed similarly. We get
 \begin{equation}\label{e.onearmbound1}
\begin{split}
	\P^A(\Mcal_1(x,y,k,t)) &\leq \P^A (0 \stackrel{=k}{\lrfill} x) \sup_{B_1 \subset \Ecal} \P_p(x \stackrel{=t}{\lrfill} y \off B_1) \\ & \qquad  \times \sup_{B_2 \subset \Ecal} \P_p(\partial B_y(r-k-t) \neq \emptyset \off B_2).
\end{split}
\end{equation}
We now separate into four cases, corresponding to whether $t \ge \tmix$ or not, and whether $r-k-t \ge \vep^{-1}$ or not.
\medskip

The first case we consider is when $t\geq \mnot$ and $r-k-t \geq \vep^{-1}$.
In this case we may use the induction hypothesis on the last term of \eqref{e.onearmbound1}, and \eqref{e:unifconnprob} together with Theorem \ref{thm:subvolume} to bound the second term on the right-hand side of \eqref{e.onearmbound1}, yielding
\[
	 \P^A(\Mcal_1(x,y,k,t)) \leq C V^{-1} \vep (1-\vep)^{r-k} \Pp^A(0 \stackrel{=k}{\lrfill} x) \, .
\]
This together with \eqref{e.onearmbound2} gives that the sum in \eqref{e.onearmboundevent} when $t \geq \mnot$ and $r-k-t \geq \vep^{-1}$ is at most
\[
	 \sum_{x,y} \sum_{\substack{k,t \geq \mnot, \ell \leq \mnot \\ k+t \leq r-\vep^{-1}}} C V^{-1} \vep^2 (1-\vep)^{r-k} \ell \Pp^A(0 \stackrel{=k}{\lrfill} x) \p^\ell(x,y) \, .
\]
Since $\sum_y \p^{\ell}(x,y)=1$, we may sum the term $\P(0 \stackrel{=k}{\lrfill} x)$ over $x$ and bound it by $C(1-\vep)^k$ using Theorem \ref{thm:subvolume}. This yields
\begin{equation}\label{e:smallell1}
	\supa \sum_{x,y} \sum_{\substack{k,t \geq \mnot, \ell \leq \mnot \\ k+t \leq r-\vep^{-1}}} \P^A(\Mcal_1(x,y,k,t)) \P^A(\Mcal_2(x,y,\ell)) \leq C \vep(1-\vep)^r {r^2 \vep \mnot^2 \over V} \, .
\end{equation}
By \eqref{e:assump0} and since $\vep_m \gg V^{-1/3}$ we get that $r^2\vep \mnot^2/V \le (V\vep^3)^{-1/3} = o(1)$, as required.
\medskip

The second case is when $t\geq \mnot$ and $r-k-t \leq \vep^{-1}$. In this case we bound the last term of \eqref{e.onearmbound1} using \eqref{e:critonearm} (instead of the induction hypothesis) and then proceed as in the previous case to obtain
\[
	 \P^A (\Mcal_1(x,y,k,t)) \leq C V^{-1} (1-\vep)^t (r-k-t)^{-1} \Pp^A (0 \stackrel{=k}{\lrfill} x) \, .
\]
So the sum in \eqref{e.onearmboundevent} when $t \geq \mnot$ and $r-k-t \leq \vep^{-1}$ is at most
\[
	 \sum_{x,y} \sum_{\substack{k,t \geq \mnot, \ell \leq \mnot \\ k+t \geq r-\vep^{-1}}} C V^{-1} (1-\vep)^t (r-k-t)^{-1} \Pp^A(0 \stackrel{=k}{\lrfill} x) \vep \ell \p^\ell(x,y) \, .
\]
As before we use $\sum_y \p^\ell(x,y) =1$ and apply Theorem \ref{thm:subvolume} to $\sum_x \Pp^A(0 \stackrel{=k}{\lrfill} x)$ to obtain
\begin{equation}\label{e:smallell2}
	\supa \sum_{x,y} \sum_{\substack{k,t \geq \mnot, \ell \leq \mnot \\ k+t \geq r-\vep^{-1}}} \P^A(\Mcal_1(x,y,k,t)) \P^A(\Mcal_2(x,y,\ell)) \leq C \vep (1-\vep)^r {r \log(\vep^{-1}) \mnot^2 \over V} \, .
\end{equation}
By \eqref{e:assump0} and since $r \leq V^{1/3}$, we get that $V^{-1} r \log(\vep^{-1}) \mnot^2 = o(1)$ as required.
\medskip

The third case is when $t \leq \mnot$ and $r-k-t \geq \vep^{-1}$. We proceed from \eqref{e.onearmbound1}. Since $t\leq \mnot$ we may enumerate the paths connecting $x$ to $y$ in the same manner that we reached \eqref{e.onearmbound2} to get that $\Pp^{B_1}(x \stackrel{=t}{\lrfill} y) \leq C(1-\vep)^t \p^t(x,y)$ (we dropped the requirement that the paths avoid $B_1$ for an upper bound). We now proceed as in the first case, and get that the sum in \eqref{e.onearmboundevent} when $t \leq \mnot$ and $r-k-t \geq \vep^{-1}$ is at most
\[
	 \sum_{x,y} \sum_{\substack{k,t \leq \mnot, \ell \leq \mnot \\ k+t \leq r-\vep^{-1}}} C \Pp^A \big(0 \stackrel{=k}{\lrfill} x \big) \vep^2 (1-\vep)^{r-k} \ell \p^t(x,y) \p^\ell(x,y) \, .
\]
By Lemma~\ref{lem:heatkernelbound}, summing $\ell \p^t(x,y) \p^\ell(x,y)$ over $\ell, t,y$ gives a factor $O(\alpha_m/\log V)$. We also apply Theorem \ref{thm:subvolume} to $\sum_x \Pp^A(0 \stackrel{=k}{\lrfill} x)$ and obtain
\begin{equation}\label{e:smallell3}
	\supa \sum_{x,y} \sum_{\substack{k,t \leq \mnot, \ell \leq \mnot \\ k+t \leq r-\vep^{-1}}} \P^A(\Mcal_1(x,y,k,t)) \P^A(\Mcal_2(x,y,\ell)) \leq C \vep (1-\vep)^r {\vep r \alpha_m \over \log V} \, .
\end{equation}
Since $r = O(\vep^{-1} \log(\vep^3 V))$ and $\alpha_m=o(1)$ we get that that last factor is $o(1)$, as required.
\medskip

The fourth and final case is when $t \leq \mnot$ and $r-k-t \leq \vep^{-1}$. We proceed from \eqref{e.onearmbound1} and use \eqref{e:critonearm}, and then proceed exactly as in the first case. We get that the sum in \eqref{e.onearmboundevent} over $t \leq \mnot$ and $r-k-t \leq \vep^{-1}$ is at most
\[
	 \sum_{x,y}\sum_{\substack{k,t \leq \mnot, \ell \leq \mnot \\ k+t \geq r-\vep^{-1}}} C (r-k-t)^{-1} \Pp^A \big(0 \stackrel{=k}{\lrfill} x \big) \vep (1-\vep)^t \ell \p^t(x,y) \p^\ell(x,y) \, .
\]
We start by summing over $x$ to get a factor $(1-\vep)^k$, and then bound the product $(1-\vep)^{k+t}$ by $C(1-\vep)^r$ since $r-k-t \leq \vep^{-1}$. We then sum $(r-k-t)^{-1}$ over $k$ and get a factor $\log(\vep^{-1})$. We finish by summing over $\ell,t,y$ using Lemma~\ref{lem:heatkernelbound} to get
\begin{equation}\label{e:smallell4}
	\supa \sum_{x,y}\sum_{\substack{k,t \leq \mnot, \ell \leq \mnot \\ k+t \geq r-\vep^{-1}}} \P^A(\Mcal_1(x,y,k,t)) \P^A(\Mcal_2(x,y,\ell)) \le C \vep (1-\vep)^r {\alpha_m \log(\vep^{-1}) \over \log V} \, ,
\end{equation}
and the proof is completed since $\log(\vep^{-1}) \leq \log V$ and $\alpha_m=o(1)$. \qed

\subsection{Proof of the upper bounds in Theorem~\ref{thm:generalmaxdiam}(b)} These follow directly from Theorems~\ref{thm:subvolume} and \ref{thm:subdiam}.\qed

\subsection{The one-arm probability off a set}

As we have seen several times before, since $\partial B(r) \neq \emptyset$ is not monotone, it is not a priori clear that the probability of this event could not increase if we restrict ourselves to a subgraph. We believe that the unrestricted setting maximizes the one-arm probability, but we are unable to prove this. The following estimate (which we shall use several times later on) shows that as long as we do not remove too many edges, the probability does not change much. In what follows, for a subset of edges $A$ we write $\Vcal(A)$ for the set of vertices which are touched by $A$.

\begin{theorem}\label{thm:subdiamoffA}
Assume the setting of Theorem \ref{thm:generalmaxdiam}.
There exists $C>0$ such that for all integers $r$ satisfying $r\geq \vep^{-1}$ and $r = O(\vep^{-1} \log(\vep^3 V))$, and for all sets of edges $A \subset \Ecal$ satisfying $|\Vcal(A)|=O(\vep^{-2} \log(\vep^3 V))$ we have
\emph{
 \[
 	\sum_x \Pp \big(\partial B_x(r) \neq \emptyset \off A \text{ but not } \partial B_x (r) \neq \emptyset \big) = o\big(V \P_p(\dbr \neq \emptyset)\big) \, .
\]}
\end{theorem}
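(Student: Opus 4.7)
The event $E_x = \{\partial B_x(r) \neq \emptyset \off A\} \setminus \{\partial B_x(r) \neq \emptyset\}$ requires the edges of $A$ to create shortcuts reducing the intrinsic radius of $\Ccal(x)$ from at least $r$ off $A$ down to strictly less than $r$ in $G_p$. Concretely, if $E_x$ occurs, fix any $y \in \partial B_x(r) \off A$; then there is an open path in $G_p$ from $x$ to $y$ of length less than $r$, and this path must traverse at least one edge of $A$. Letting $e = \{u, v\} \in A$ be the first $A$-edge along this path starting from $x$, we have $e$ open and $u \in \Ccal(x)_A$ via the initial segment of the path (which is off $A$). A union bound over $e \in A$, together with the independence of $\{e \text{ open}\}$ from the off-$A$ configuration, yields
\[
\Pp(E_x) \leq p \sum_{e = \{u,v\} \in A} \Pp\bigl(\{\partial B_x(r) \neq \emptyset \off A\} \cap \{x \conn u \off A\}\bigr).
\]

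To bound the joint probability on the right, I would mirror the meeting-vertex/BKR decomposition from the proof of Theorem~\ref{thm:subvolume}: any shortest open path $\eta$ of length $r$ from $x$ off $A$ and any open path from $x$ to $u$ off $A$ share an initial segment from $x$ to some vertex $z = \eta(s)$ for $s \in \{0,\ldots,r\}$, after which they diverge edge-disjointly. Conditioning on $B^{\sss G \setminus A}_x(s)$ and on the portion of $\eta$ after $z$, the three pieces ($x \stackrel{=s}{\lrfill} z$ off $A$, an arm of length $r-s$ from $z$ off $A$ together with the explored edges, and a path from $z$ to $u$ off the same explored set) occur disjointly; handling the non-monotone event $\{\partial B(r) \neq \emptyset\}$ via the ``first shortest open path'' events $\A_r(v,\eta)$ as in the proofs of Theorems~\ref{thm:subvolume} and \ref{thm:subdiam}, one obtains
\[
\Pp(\cdot) \leq \sum_{s=0}^{r} \sum_z \sup_{B_1 \subset \Ecal} \Pp\bigl(x \stackrel{=s}{\lrfill} z \off B_1\bigr) \cdot \sup_{B_2 \subset \Ecal} \Pp\bigl(\partial B_z(r-s) \neq \emptyset \off B_2\bigr) \cdot \sup_{B_3 \subset \Ecal} \Pp\bigl(z \conn u \off B_3\bigr).
\]

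Now sum. By translation invariance and Theorem~\ref{thm:subvolume}, $\sum_x \sup_{B_1} \Pp(x \stackrel{=s}{\lrfill} z \off B_1) \leq C(1-\vep)^s$; by subcriticality, $\sum_z \sup_{B_3} \Pp(z \conn u \off B_3) \leq \chi(p) = O(\vep^{-1})$; the middle factor is $O(\vep(1-\vep)^{r-s})$ from Theorem~\ref{thm:subdiam} when $r-s \geq \vep^{-1}$ and $O((r-s)^{-1})$ from \eqref{e:critonearm} otherwise. Summing over $s$ yields the factor $O\bigl((1-\vep)^r(\vep r + \log \vep^{-1})\bigr)$, and then summing over $e \in A$ (of which there are at most $m|\Vcal(A)|/2$) with $pm = 1 + O(1/m)$ gives
\[
\sum_x \Pp(E_x) = O\bigl(|\Vcal(A)| \, r \, (1-\vep)^r\bigr),
\]
which I would compare with $V_m \Pp_p(\partial B(r) \neq \emptyset) \geq c V_m (1-\vep)^r / r$ from Theorem~\ref{thm:generalmaxdiam}(b), invoking the hypotheses $|\Vcal(A)| = O(\vep^{-2}\log(\vep^3 V_m))$ and $r = O(\vep^{-1}\log(\vep^3 V_m))$. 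The main obstacle is the careful execution of the BKR step in the presence of the non-monotone event $\{\partial B(r) \neq \emptyset\}$: one must track closed-edge certificates alongside open-path witnesses, and organize the summation so that the sharp one-arm estimate of Theorem~\ref{thm:subdiam} (rather than Theorem~\ref{thm:subvolume}) controls the middle factor, thereby saving the extra factor beyond the trivial bound $\Pp(E_x) \le \Pp(\partial B_x(r) \neq \emptyset \off A)$.
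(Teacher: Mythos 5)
There is a genuine gap. Your union bound over the first $A$-edge $e=\{u,v\}$ keeps only the \emph{outgoing} half of the shortcut: you retain the fact that some vertex $z=\eta(s)$ on the arm connects to $u$ off $A$, but discard the requirement that the shortcut \emph{return} to $\Ccal(x)$ and actually complete a loop that shortens the arm. With only a single tether from the arm to $\Vcal(A)$, the best you can extract from the connection factor is $\sum_z \sup_{B_3}\Pp(z\conn u\off B_3)\leq \chi(p)=O(\vep^{-1})$, and this is too coarse. Chasing your own estimate to the end (and note you also dropped a term: once $\chi(p)$ is multiplied in, the sum over $s$ gives $(1-\vep)^r(r+\vep^{-1}\log\vep^{-1})$) yields a bound of order $|\Vcal(A)|(1-\vep)^r(r+\vep^{-1}\log\vep^{-1})$. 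Dividing by $V\Pp(\dbr\neq\emptyset)\geq c V(1-\vep)^r/r$ leaves a ratio of order at least $|\Vcal(A)|r^2/V=\Theta\bigl(\log^3(\vep^3 V)/(\vep^4 V)\bigr)$, which does not tend to zero throughout the regime $\vep^3V\to\infty$ (it already blows up when $\vep\sim V^{-1/4}$).

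The paper's proof recovers the needed smallness exactly from what your bound discards. It routes the shortcut through a \emph{vertex} $a\in\Vcal(A)$ and records \emph{two} disjoint connections $\{u\conn a\}\circ\{a\conn v\}$ with $u,v\in\Vcal(\eta)$, and---crucially---splits these by length. When both legs are $\geq\mnot$, the uniform connection bound \eqref{e:unifconnprob2} gives $3V^{-1}\chi(p)$ per leg, so the combined factor is $\chi(p)^2/V^2$ in place of your $\chi(p)$; after summing over $a\in\Vcal(A)$ and $u,v\in\Vcal(\eta)$ this produces the decisive factor $\log^3(\vep^3V)/(\vep^3V)^2=o(1)$. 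When a leg is $\leq\mnot$, summing its endpoint over $\Vcal$ via \eqref{e:critvolball} contributes only a factor $\mnot$ rather than $\chi(p)$, again a large saving since $\mnot=o(V^{1/3}/\log^2 V)$. Without both the second tether to $\eta$ and the short/long split via $\mnot$, no $V^{-1}$-type smallness appears and the estimate cannot close.
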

\proof
 If $\partial B_{x}(r) \neq \emptyset$ occurs off $A$ but $\partial B_{{x}}(r) \neq \emptyset$ does not occur, then there exists a $p$-open path $\eta$ of length $r$ in $G\setminus A = (\Vcal, \Ecal \setminus A)$ {started at $x$} such that any other $p$-open path in $G \setminus A$ between the endpoints of $\eta$ has length at least $r$ \emph{but} there exists a $p$-open path in $G$ between the endpoints of $\eta$ of length less than $r$. This path clearly needs to pass through a vertex in $\Vcal(A)$ and thus a shortcut is made. By now we are familiar with various techniques of bounding such events. For a fixed path $\eta$ of length $r$ {started at $x$}, denote by $\A_r({x,}\eta; A)$ the event that $\eta$ is the lexicographically first $p$-open shortest path in $G\setminus A$ {started at $x$}, so that $\uplus_\eta \A_r({x,}\eta; A) = \{\partial B_{{x}}(r) \neq \emptyset \off A\}$.

The event $\A_r({x,}\eta; A) \cap \{\partial B_{{x}}(r) = \emptyset\}$ implies that there exists $u,v \in\Vcal(\eta)$ and $a\in \Vcal(A)$ such that the event
\[
	 \A_r({x,}\eta; A) \circ \{ u \lrfill a \} \circ \{ a \lrfill v \}
\]
occurs.
Indeed, the set of witness edges for the first event are the edges of $\eta$ together with all the closed edges (which determine that $\eta$ is the first $p$-open shortest path). The sets of witness edges for the second and third event are the edges on two disjoint paths, connecting $a$ to $u$ and $a$ to $v$, respectively (such paths must exists because there exists a shortcut to $\eta$ passing through $\Vcal(A)$).

\label{page:Hdef}
We again split the event according to whether these shortcuts are longer than $\mnot$ or not. {Denote by $\Hcal^{\ge} (x,\eta,a; A)$} the event that {$\Acal_r(x,\eta; A)$ occurs, and that} both disjoint connections from $a$ to $u$ and $a$ to $v$ have length at least $\mnot$, and {$\Hcal^{\le}({x,}\eta,a; A)$ analogously, except that now one of the connections} has length at most $\mnot$, that is,
\[
	  \Hcal^{\ge}({x,}\eta, a; A) := \A_r({x,} \eta; A) \circ \{ u \stackrel{\geq \mnot}{\lrfill} a \} \circ \{ a \stackrel{\geq \mnot}{\lrfill} v \} \, ,
\]
and
\[
	 \Hcal^{\le}({x,}\eta, a; A) := {\big(} \A_r({x,}\eta; A) \circ \{ u \stackrel{\leq \mnot}{\lrfill} a \} \circ \{ a \stackrel{}{\lrfill} v \} {\big)} \cup {\big (} \A_r({x,}\eta; A) \circ \{ u \stackrel{}{\lrfill} a \} \circ \{ a \stackrel{\leq \mnot}{\lrfill} v \} {\big)} \, .
\]
\smallskip

We bound the probability of $\Hcal^{\ge}({x,}\eta,a; A)$ using the BKR-inequality, \eqref{e:chibds}, and \eqref{e:unifconnprob2},
\[\begin{split}
	 \sum_{a \in \Vcal(A)} \P_p(\Hcal^{\ge}({x,}\eta,a; A)) &\leq \sum_{a \in \Vcal(A), u,v\in \Vcal(\eta)} C \P_p(\A_r({x,}\eta; A)) \frac{\chi(p)^2}{V^2} \\
	 & \leq C|\Vcal(A)| r^2   \P_p(\A_r({x,}\eta; A))  \vep^{-2} V^{-2} \, .
\end{split}
\]
Hence, by our assumptions on $|\Vcal(A)|$ and $r$ we get
\[\begin{split}
	 \sum_\eta \sum_{a \in \Vcal(A)} \P_p(\Hcal^{\ge} ({x,}\eta,a;A))
	 &\leq \frac{C  \log^3(\vep^3 V) }{(V \vep^3)^{2}}  \P_p(\dbr \neq \emptyset \off A)\\
	 & = o(\P_p(\dbr \neq \emptyset)) \, ,
\end{split}\]
where the last bound is due to Lemma \ref{lem:subdiam}(a) and Theorem \ref{thm:subdiam}. \medskip

To bound the probability of $\uplus_\eta \cup_{a \in \Vcal(A)} \Hcal^{{\le}}({x,} \eta,a; A)$ we consider a further two cases: either the disjoint paths from $a$ to $u$ and $a$ to $v$ are both of length at most $\mnot$, or one of these paths has length at most $\mnot$ and the other is longer than $\mnot$. For fixed $a \in \Vcal(A)$, the union over $\eta$ of the first case implies that there exists vertices $u,v \in \Vcal(\eta)$ and integers $k,t \leq r$ such that
\begin{multline*}
	\big\{\{x \stackrel{=k}{\lrfill} u\}  \cap \{u \stackrel{=t}{\lrfill} v  \off B_x(k)\} \cap \{\partial B_v(r-k-t) \neq \emptyset \off B_x(k+t)\} \big\}\\
	\circ\big\{u \stackrel{\le \mnot}{\lrfill} a\big\} \circ \big\{a \stackrel{\le \mnot}{\lrfill} v\big\}
\end{multline*}
occurs.
As usual, the witness edges for the first disjointly occurring event are the edges of $\eta$ and all the closed edges, and the other two sets of witness edges are simply the disjoint open paths between $a$ and $u$ and between $a$ and $v$. The analysis now proceeds similarly to the proof of previous theorems in this section, splitting the sum into four parts, according to whether $t \ge \tmix$ or not, and whether $r-k-t \ge \vep^{-1}$ or not.
\medskip

We start with the case $t \geq \mnot$ and $r-k-t \ge \vep^{-1}$. We use the BKR-inequality and as before, we condition on $B_x (k+t)$ and use Theorem~\ref{thm:subdiam} to get a factor $\vep (1-\vep)^{r-k-t}$ for the probability of $\partial B_v(r-k-t) \neq \emptyset \off B_x(k+t)$. We proceed by conditioning on $B_x (k)$ and use Theorem \ref{thm:subvolume} and \eqref{e:unifconnprob2} to get a factor $V^{-1} (1-\vep)^t$. We then sum the probability of the third disjoint event over $v$ to get a factor $\mnot$ by \eqref{e:critvolball}. Then we sum the probability of the first event in the first disjoint event over $x$ to get a factor $(1-\vep)^k$ by Theorem~\ref{thm:subvolume}. Lastly, we sum the probability of the second disjoint event over $u$ to get a factor $\mnot$, again by \eqref{e:critvolball}. All this gives the bound
\[
	 \sum_{a\in \Vcal(A)} \sum_{k,t \leq r} C V^{-1} \vep (1-\vep)^{r} \mnot^2  \leq C V^{-1} r^2 \vep^{-2} \log(\vep^3 V) \mnot^2 \vep(1-\vep)^r \, .
\]
{By \eqref{e:assump0},} our assumption on $r$ and since $\vep \gg V^{-1/3}$, this quantity is $o(V(1-\vep)^r/r)$. Lemma \ref{lem:subdiam}(a) now gives the claimed bound in this case.
\medskip

The case where $t \ge \mnot$ and $r-k-t < \vep^{-1}$ is very similar, except that we now use \eqref{e:critonearm} to get a factor $(r-k-t)^{-1}$. This gives the bound
\[
	\sum_{a\in \Vcal(A)} C V^{-1} (1-\vep)^r \mnot^2 \sum_{\substack{k,t \leq r:\\ r-k-t \leq \vep^{-1}}}(r-k-t)^{-1} \leq CV^{-1} \vep^{-2} \log(\vep^3 V) \mnot^2 r \log(\vep^{-1}) (1-\vep)^r \, ,
\]
which is $o(V r^{-1} (1-\vep)^r)$ by \eqref{e:assump0} and our usual assumptions on $r$ and $\vep$.
\medskip

For the case $t \leq \mnot$ and $r-k-t \geq \vep^{-1}$ we again apply the same method of conditioning on $B_x(k+t)$ and $B_x(k)$ to get a factor $C\vep(1-\vep)^{r-k}$. At this point, instead of summing over $u$, we sum over $x$, using Theorem \ref{thm:subvolume} to bound
\[
	 \sum_{\substack{a\in \Vcal(A), u,v:\\
	 a \neq u, u \neq v, v \neq a}} \sum_{\substack{ t \le \mnot, k :\\ r-k-t \geq \vep^{-1}} } C \vep (1-\vep)^r \P_p \big(a \stackrel{\le \mnot}{\lrfill} u \big) \P_p \big(u \stackrel{=t}{\lrfill} v \big) \P_p \big(a \stackrel{\le \mnot}{\lrfill} v \big)
\]
(the restriction $a \neq u, u \neq v, v \neq a$ follows since by construction $a, u$, and $v$ must be distinct vertices).
Summing over $k$ gives a factor $r$. By enumerating over paths, in the same way we derived \eqref{e.onearmbound2}, when $s \le \mnot$ we may bound $\P_p(a \stackrel{=s}{\lrfill} u) \leq (1+o(1))\p^s(a,u)$. This yields
\[
	C \vep (1-\vep)^r r \vep^{-2} \log(\vep^3 V) \sum_{u,v} \sum_{\substack{t_1, t_2, t_3 : \\t_1+t_2+t_3 \geq3}}^{\mnot} \p^{t_1}(0,u)\p^{t_2}(u,v)\p^{t_3}(v,0),
\]
where the sum $t_1 + t_2 + t_3 \geq 3$ since $a,u,$ and $v$ are distinct vertices. By \eqref{e:assump2} and the rest of our assumptions we get this sum is again $o(V r^{-1} (1-\vep)^r)$.
\medskip

The fourth and final case is when $t \leq \mnot$ but $r-k-t \leq \vep^{-1}$. We take the same first steps as in the previous case, but when we sum over $x$ we now get the bound
\[
	 C (1-\vep)^{r-\vep^{-1}} \vep^{-2} \log(\vep^3 V) \sum_k (r-k)^{-1} \sum_{u,v} \sum_{\substack{t_1, t_2, t_3 : \\t_1+t_2+t_3 \geq3}}^{\mnot} \p^{t_1}(0,u)\p^{t_2}(u,v)\p^{t_3}(v,0) \, ,
\]
where we used \eqref{e:critonearm}.
This is at most
\[
	(1-\vep)^r \log(r) \vep^{-2} \log(\vep^3 V) \frac{ \alpha_m}{\log V} = o(V r^{-1} (1-\vep)^r)
\]
by \eqref{e:assump2} and the rest of our assumptions, and since $\log r \leq \log V$.
\qed

\section{The component of maximal diameter}\label{sec:maxdiam}
In this section we prove that $\dmax = (1+o(1))\vep^{-1} \log(\vep^3 V)$ with high probability. To start, we need a refinement of \eqref{e:smallCdiam}.

\begin{lemma}\label{lem:diametervolume}
Assume the setting of Theorem \ref{thm:general1}.
Then there exist $C < \infty$ and $c>0$ such that for any any $\alpha, \beta > 0$ satisfying $\beta \leq \alpha^2/(32 C_2)$ where $C_2$ is the constant from \eqref{e:critonearm}, and any $r \ge 4(\alpha \vee 1) \vep^{-1}$, we have
\[
	\Pp \big (\partial B(r) \neq \emptyset \and B( \alpha \vep^{-1}) \leq \beta \vep^{-2} \big ) \leq C \frac{\vep}{\alpha} (1-\vep)^{r-\alpha \vep^{-1}} e^{-c\alpha^2 /\beta}   .
\]
\end{lemma}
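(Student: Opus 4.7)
The plan is to decompose the event according to the BFS exploration up to intrinsic radius $r_1 := \alpha\vep^{-1}$, writing $r_2 := r - r_1 \ge 3\vep^{-1}$ by the hypothesis $r \ge 4(\alpha\vee 1)\vep^{-1}$. Since $\{\partial B(r)\neq\emptyset\} \subseteq \{\partial B(r_1)\neq\emptyset\}$, I condition on $\Fcal := \sigma(B(r_1))$ and write
\[
	\Pp\bigl(\partial B(r)\neq\emptyset,\,|B(r_1)|\le\beta\vep^{-2}\bigr) = \E\!\left[\1_{\{|B(r_1)|\le\beta\vep^{-2},\,\partial B(r_1)\neq\emptyset\}}\,\Pp(\partial B(r)\neq\emptyset\mid\Fcal)\right].
\]

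First, for the unconditional probability of the indicator event, I apply \eqref{e:smallCdiam} with $r=r_1$ and $k=\beta\vep^{-2}$. The hypothesis $\beta\le\alpha^2/(32C_2)$ together with $\vep V^{1/3}\to\infty$ ensures the admissibility condition $r_1 \ge 16C_2 kV^{-1/3}$, and the exponent $r_1^2/k = \alpha^2/\beta$ yields the factor $e^{-c\alpha^2/\beta}$. Since the proof of \eqref{e:smallCdiam} in \cite{NacPer08} inspects only the BFS up to depth $r_1$, it in fact bounds $|B(r_1)|$ rather than only $|\Ccal|$, giving $C(\vep/\alpha)e^{-c\alpha^2/\beta}$.

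Second, given $\Fcal$ the unexplored edges are iid Bernoulli$(p)$, and $\partial B(r)\neq\emptyset$ forces some $v\in\partial B(r_1)$ to extend to intrinsic distance $r_2$ in the complement of the revealed edge set $A$. A union bound and the off-$A$ analog of Lemma~\ref{lem:subdiam}(b) (which holds because $\{|\Ccal|\ge k\}$ is monotone in edges and \eqref{e:critonearm} is already stated uniformly over subgraphs) give
\[
	\Pp(\partial B(r)\neq\emptyset\mid\Fcal) \le |\partial B(r_1)|\cdot C\vep(1-\vep)^{r_2}.
\]

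The main obstacle is the combination step: the crude bound $|\partial B(r_1)|\le|B(r_1)|\le\beta\vep^{-2}$ is too wasteful by a factor $\beta/\vep$. I would upgrade this by summing over $v\in\partial B(r_1)$ directly, using the BKR inequality to disjointify $\{0\stackrel{=r_1}{\lrfill}v\}\circ\{v\text{ extends to distance }r_2\}$, with the thin-sausage constraint $|B(r_1)|\le\beta\vep^{-2}$ lying entirely inside the first event. A per-vertex version of \eqref{e:smallCdiam} then delivers the $\vep/\alpha$ prefactor and $e^{-c\alpha^2/\beta}$ decay for each $v$, while the extension factor $C\vep(1-\vep)^{r_2}$ attaches to the second event; the constraint $\beta\le\alpha^2/(32C_2)$ is precisely what makes the stretched-exponential factor dominant enough to absorb any residual polynomial losses accumulated along the way.
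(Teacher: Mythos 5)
Your decomposition at the fixed radius $r_1=\alpha\vep^{-1}$ does not close, and the gap is exactly the step you flag as the ``main obstacle.'' After conditioning on $\Fcal=\sigma(B(r_1))$ and applying the one-arm bound to each boundary vertex, what you would need is an estimate of the form
\[
	\E\bigl[\,|\partial B(r_1)|\,\1_{\{|B(r_1)|\le\beta\vep^{-2}\}}\bigr]\;\lesssim\;\frac{1}{\alpha}\,e^{-c\alpha^2/\beta}\,,
\]
and no such bound is available. On the event $\{|B(r_1)|\le\beta\vep^{-2}\}$ the boundary $|\partial B(r_1)|$ can itself be as large as $\beta\vep^{-2}$ --- the constraint controls total volume, not how it is distributed across levels --- which is why the crude bound loses the factor $\beta\vep^{-1}$ you noted. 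There is no ``per-vertex version of \eqref{e:smallCdiam}'' to fall back on: \eqref{e:smallCdiam} bounds a probability, not a boundary-weighted expectation, and its proof in \cite{NacPer08} is itself the thin-level iteration you are implicitly trying to skip.

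The paper's proof avoids this trap by \emph{not} conditioning at a deterministic radius. It uses $|B(\alpha\vep^{-1})|\le\beta\vep^{-2}$ and pigeonhole to find many thin levels $j\in[\alpha\vep^{-1}/2,\alpha\vep^{-1}]$ with $|\partial B(j)|\le h:=4\beta\vep^{-1}/\alpha$, then recursively selects $n\gtrsim\alpha^2/\beta$ of them spaced by $2C_2h$ and shows that each must be ``good'' (extend by $2C_2h$), an event of conditional probability $\le 1/2$ by \eqref{e:critonearm} and the bound on the boundary size at a thin level; iterating yields $2^{-n}=e^{-c\alpha^2/\beta}$. The long extension from the \emph{last} thin level $j_n$ to radius $r$ is then handled by Theorem~\ref{thm:subdiam} plus a union bound over the $\le h$ boundary vertices --- and this works precisely because $|\partial B(j_n)|\le h$ is guaranteed by construction, which is the control your approach is missing at $r_1$. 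Two smaller remarks: to get the $(1-\vep)^{r_2}$ factor you invoke an off-$A$ analog of Lemma~\ref{lem:subdiam}(b), but that lemma only gives $\vep\,e^{-c_4A}$ with an unspecified $c_4<1$; the sharp exponent requires Theorem~\ref{thm:subdiam}, which is what the paper uses and which needs the stronger hypotheses of Theorem~\ref{thm:generalmaxdiam} rather than merely the triangle condition.
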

\begin{proof} The proof is very similar to \cite[Lemma 6.2]{NacPer08}, however, minor changes are required so we briefly repeat it here for completeness. Put $h=4\beta \vep^{-1}/\alpha$. We say that a level $j \in [\alpha \vep^{-1}/2,\alpha \vep^{-1}]$ is \emph{thin} if $|\partial B(j)|\leq h$. Define $j_1$ to be the first thin level larger than $\alpha \vep^{-1}/2$ and recursively define for $i \geq 2$,
\[
	 j_i = \min\big\{ j \geq j_{i-1} + 2C_2 h : |\partial B(j)|\leq h \big \}
\]
where $C_2$ is the constant from \eqref{e:critonearm}. We say that level $j$ is \emph{good} if there exists a vertex $w \in \partial B(j)$ such that $\partial B_w(2C_2h) \neq \emptyset \off B(j)$. By \eqref{e:critonearm} and the union bound we have that
\[
	 \P_p \big (\text{level } j \text{ is good} \,\, \big | \,\, B(j), j \text{ is thin} \big ) \leq {1 \over 2} \, .
\]
We iterate this and get that for any $n$ we have
\begin{equation}\label{diamvol.iterate} \P_p \big ( \text {level } j_i \text{ is good for all } i \leq n \,\, \big | \,\, B(\alpha \vep^{-1}/2) \big ) \leq 2^{-n} \, .\end{equation}

Now, if the {events} $\partial B(r) \neq \emptyset$ and $B( \alpha \vep^{-1}) \leq \beta \vep^{-2}$ {occur,} then the following occurs:
\begin{enumerate}
\item $\partial B(\alpha \vep^{-1}/2) \neq \emptyset$, and
\item levels $j_1, j_2, \ldots, j_n$ are good with $n$ satisfying $n \geq \alpha \vep^{-1} / (8C_2 h)$, and
\item there exists $w \in \partial B(j_n)$ such that $\partial B_w(r-j_n) \neq \emptyset$ off $B(j_n)$.
\end{enumerate}
Only (ii) requires an explanation: since $B( \alpha \vep^{-1}) \leq
\beta \vep^{-2}$ we get that at least $\alpha \vep^{-1}/4$ levels $j$ in $[\alpha \vep^{-1}/2,\alpha \vep^{-1}]$ must be thin and therefore we can find at least $\alpha \vep^{-1}/ (4 \cdot 2C_2 h)$ thin levels such that each is separated from the others by $2C_2 h$ levels. Note that we used $\alpha \vep^{-1} / (8C_2 h) \geq 1$ which follows from our assumption $\beta \leq \alpha^2/(32 C_2)$.

By \eqref{e:critonearm} the probability of (i) is at most $C \vep / \alpha$. By Theorem \ref{thm:subdiam}, the fact that level $j_n$ is thin, $j_n \le \alpha \vep^{-1}$, and the union bound, we get that
\[
	 \P_p \big ( \exists w \in \partial B(j_n) \,: \, \partial B_w(r-j_n) \neq \emptyset \off B(j_n) \,\, \big | \,\, j_n \leq 2\vep^{-1}, B(j_n) \big ) \leq C h \vep (1-\vep)^{r-\alpha \vep^{-1}} \, .
\]
Combining these with \eqref{diamvol.iterate} and plugging in the value of $h$ gives that
\[
	\P\big (\partial B(r) \neq \emptyset \and B( \alpha \vep^{-1}) \leq \beta \vep^{-2} \big ) \leq C \frac{\vep}{\alpha} (1-\vep)^{r-\alpha \vep^{-1}} e^{-c\alpha^2 /\beta}  \, ,
\]
where $c=(32C_2)^{-1}$ and we used again our assumption $\beta \leq \alpha^2/(32 C_2)$. \end{proof}

\subsection{Proof of the upper bound in Theorem \ref{thm:generalmaxdiam}(a).}
We begin by proving the upper bound on $\dmax$, that is, we will prove that under the conditions of the theorem, for any $\delta>0$ we have
\begin{equation}\label{pf:generalmaxdiam.upper} \P \big ( \exists \Ccal \with \diam(\Ccal) \geq (1+\delta) \vep^{-1} \log(\vep^3 V) \big ) = o(1)  \, .\end{equation}

Put $r=(1+\delta) \vep^{-1} \log(\vep^3 V)$. The initial idea is that if there is a vertex $x$ such that $\partial B_x(r) \neq \emptyset$, then $|B_x (\vep^{-1})|$ is typically of order $\vep^{-2}$ and so there are in fact $\vep^{-2}$ vertices $u$ with $\partial B_u(r-\vep^{-1} ) \neq \emptyset$, allowing us to use Markov's inequality. However, with some small probability the $\vep^{-1}$-ball will have $o(\vep^{-2})$ vertices, invalidating the argument. We fix this with a multi-scale argument using Lemma \ref{lem:diametervolume}.

This simple idea works rather easily when $|B_x (\vep^{-1})|$ is smaller than $\vep^{-2} /\log(\vep^{-1})$. Indeed, by Lemma \ref{lem:diametervolume} and the union bound we get that for any $\beta>0$
\[
	 \P \big (\exists x \with \partial B_x(r) \neq \emptyset \and |B_x (\vep^{-1})| \leq \beta \vep^{-2} / \log(\vep^{-1})\big ) \leq  {C \vep V (1-\vep)^r e^{-c\log(\vep^{-1})/\beta}} \, .
\]
We have that $(1-\vep)^r \leq (\vep^3 V)^{-1-\delta}$. We put $\beta = c/2$ so that $e^{-c\log(\vep^{-1})/\beta} = \vep^2$, and hence the above probability is at most $(\vep^3 V)^{-\delta} = o(1)$.

If $\log(\vep^{-1}) \leq (\vep^3 V)^{\delta/2}$, then we may conclude, since by the triangle inequality the event
\[
	 \big \{ \exists x \with \partial B_x(r) \neq \emptyset \and |B_x(\vep^{-1})| \geq \beta \vep^{-2}/ \log(\vep^{-1}) \big \}
\]
implies that there are at least $\beta \vep^{-2} /\log(\vep^{-1})$ vertices $u$ such that $\partial B_u (r-\vep^{-1}) \neq \emptyset$. By Theorem \ref{thm:subdiam} and Markov's inequality we get that this probability is at most
\[
	 {C V \vep (1-\vep)^r \log(\vep^{-1})  \over  \beta \vep^{-2} } \leq {\log(\vep^{-1}) \over  (\vep^3 V)^{\delta}} = o(1) \, .
\]

If on the other hand $\log(\vep^{-1}) > (\vep^3 V)^{\delta/2}$, we define $N=N(\vep)$ to be
\[
	 N = \min \,\, \big \{ n \geq 2: \log^{(n)}(\vep^{-1}) \leq (\vep^3 V)^{\delta/2} \big \} \, ,
\]
where $\log^{(n)}$ is the composition of $\log$ with itself {$n-1$} times. Define an increasing sequence of radii $(r_k)_{k=1}^N$ by
\[
	 r_k := \vep^{-1} + {\vep^{-1} \over \log^{(k+1)}(\vep^{-1})} \, .
\]
If there exists a vertex $x$ such that $\partial B_x(r) \neq \emptyset$ and $|B_x (\vep^{-1})| \geq \beta \vep^{-2} /\log(\vep^{-1})$ {both occur,} then one of the following events must occur:
\begin{enumerate}
\item
\[
	|B_x (r_{N})| \geq {\vep^{-2} \over (\log^{(N)}(\vep^{-1}))^2} \, , \quad \text{or}
\]
\item there exists $k\in\{2,\ldots, N\}$ such that
\[
	 |B_x (r_k)| \leq {\vep^{-2} \over (\log^{(k)}(\vep^{-1}))^2} \qquad \and  \qquad |B_x (r_{k-1})| \geq {\vep^{-2} \over (\log^{(k-1)}(\vep^{-1}))^2} \, , \quad \text{or}
\]
\item
\[
	 |B_x (r_1)| \leq {\vep^{-2} \over (\log(\vep^{-1}))^2} \qquad \and \qquad |B_x (\vep^{-1})| \geq {\beta \vep^{-2}  \over \log(\vep^{-1})}\, .
\]
\end{enumerate}

By the triangle inequality and since $r_k \leq 2\vep ^{-1}$, if (i) occurs, then there are at least $\vep^{-2}/(\log^{(N)}(\vep^{-1}))^2$ vertices $u$ such that $\partial B_u (r-2\vep^{-1}) \neq \emptyset$. As before, Theorem~\ref{thm:subdiam} together with Markov's inequality gives that the probability of this is at most
\[
	 {C V \vep (1-\vep)^{r-2\vep^{-1}} (\log^{(N)}(\vep^{-1}))^2 \over \vep^{-2}} \leq {C (\log^{(N)}(\vep^{-1}))^2 \over (\vep^3 V)^\delta} = o(1) \, ,
\]
by definition of $N$.

If (ii) occurs for some $k\in\{2,\ldots, N\}$, then each vertex $u \in B_x (r_{k-1})$ {satisfies}
\[
	|B_u (r_k-r_{k-1})|\leq \frac{\vep^{-2}}{(\log^{(k)}(\vep^{-1}))^2} \qquad \text{ and } \qquad \partial B_u(r - 2 \vep^{-1}) \neq \emptyset.
\]
By Lemma \ref{lem:diametervolume}, for each vertex $u$ the probability of this is at most
\[
	 {C \vep (1-\vep)^{r-2\vep^{-1}} \exp \Bigg(-c {(\log^{(k)}(\vep^{-1}))^2 \over (\log^{(k+1)}(\vep^{-1}))^2}} \Bigg) \leq {C \vep (\vep^3 V)^{-1-\delta} (\log^{(k-1)}(\vep^{-1}))^{-3}}  \, ,
\]
where the last inequality follows from our usual assignment of variables and the fact that $\log^{(k)}(\vep^{-1})/(\log^{(k+1)}(\vep^{-1}))^2 \to \infty$. Since $|B_x (r_{k-1})| \geq \vep^{-2} / (\log^{(k-1)}(\vep^{-1}))^2$ we get by Markov's inequality that the probability that (ii) occurs for some $k \in \{2,\ldots, N\}$ is at most
\[
	 {C (\vep^3 V)^{-\delta} \big(\log^{(k-1)}(\vep^{-1})\big)^{-1}} \, ,
\]
and summing over $k$ gives that the probability of (ii) tends to $0$ as well.

The bound on (iii) is performed in the same way, using Lemma~\ref{lem:diametervolume}. This concludes the proof of \eqref{pf:generalmaxdiam.upper}. \qed
\medskip

\label{page:lbdiam}
\subsection{Proof of the lower bound in Theorem \ref{thm:generalmaxdiam}(a).}\label{page:lbdiam}
Let us now prove the lower bound on $\dmax$, i.e., that for any $\delta>0$ we have
\begin{equation}\label{pf:generalmaxdiam.lower} \P \big ( \exists \Ccal \with \diam(\Ccal) \geq (1-\delta) \vep^{-1} \log(\vep^3 V) \big ) = 1-o(1)  \, .\end{equation}

Let $\delta>0$ be arbitrary and put $r=(1-\delta)\vep^{-1}\log(\vep^3 V)$. Let $D_r$ denote the random variable
\begin{equation}\label{e:Drdef}
	 D_r = \# \big\{ v : \partial B_v(r) \neq \emptyset \and |\Ccal(v)| \leq 5 \vep^{-2} \log(\vep^3 V) \big\} \, ,
\end{equation}
so that it suffices prove that $D_r > 0$ with probability tending to $1$. We prove this using a second moment argument. By \eqref{e:uppertail} and \eqref{e:chibds} it follows that
\begin{equation}\label{e:Clarge}
 \P_p \big( |\Ccal(v)| \geq 5 \vep^{-2} \log(\vep^3 V) \big) \leq C \vep (\vep^3 V)^{-2} = o(r^{-1} (1-\vep)^r) \, ,
\end{equation}
by our choice of $r$. Hence by Lemma \ref{lem:subdiam}(a) we have that
\[
	\P_p\big(\dbr \neq \emptyset \and |\Ccal| \leq 5 \vep^{-2} \log(\vep^3 V)\big) \geq (1-o(1)) \P_p(\dbr \neq \emptyset).
\]
By Lemma \ref{lem:subdiam}(a) again we get that for some fixed $c>0$,
\begin{equation}\label{pf:diamfirstmom} \E_p [D_r] \geq (1-o(1)) V \P_p(\dbr \neq\emptyset) \geq  c V r^{-1} (1-\vep)^r \geq c \vep^{-2} (\vep^3 V)^\delta \, .\end{equation}

Any pair of vertices $u$ and $v$ counted in $D_r$ can either belong to the same component or not. Thus, the second moment of $D_r$ can be bounded by
\[\begin{split}
	\E_p [D_r^2] & \leq \sum_{u,v} \P_p\big (\partial B_u(r) \neq \emptyset,  v \in \Ccal(u),  |\Ccal(u)| \leq 5\vep^{-2}\log(\vep^3 V) \big ) \\
	& \quad +\sum_{u,v} \P_p\big (\partial B_u(r) \neq \emptyset, v \not \in \Ccal(u),  |\Ccal(u)| \leq 5\vep^{-2} \log(\vep^3 V) , \partial B_v(r) \neq \emptyset \big ) \, .
\end{split}
\]
Denote the first sum by (I) and the second by (II). Bounding the first term is easy:
\[\begin{split}
	\hbox{{\rm (I)}} & = V \E_p\big[ |\Ccal| \indi_{\{\dbr \neq \emptyset\}} \indi_{\{|\Ccal| \leq 5 \vep^{-2} \log(\vep^3 V)\}}\big]\\
	 & \leq 5 V\vep^{-2} \log(\vep^3 V) \P_p(\dbr \neq \emptyset) \\
	 & \leq C V \vep^{-1}\log(\vep^3 V)(1-\vep)^r \\
	 & \leq C \vep^{-4} (\vep^3 V)^\delta \log(\vep^3 V) \, ,\end{split}
\]
where the one before last inequality is due to Theorem \ref{thm:subdiam}, and the last inequality comes from plugging in the value of $r$. Since $ \vep^3 V \to \infty$, by \eqref{pf:diamfirstmom} we deduce that (I) is $o(\E_p [D_r]^2)$.

To estimate (II) we condition on $\Ccal(u)$ such that $\partial B_u(r) \neq \emptyset$, and then require that $\partial B_v(r)\neq \emptyset$ occurs off $\Ccal(u)$. We write this as
\begin{equation}\label{e:twosum}
	 \hbox{{\rm (II)}} = \sum_{u} \sum_{\substack{A \subset G :\\
	  {|\Vcal(A)|} \leq 5\vep^{-2} \log(\vep^3 V), \\ \partial B^{{A}}_u(r) \neq \emptyset}} \P_p(\Ccal(u) = A) \sum_v \P_p(\partial B_v(r) \neq \emptyset \off {\Vcal(A)}) \, .
\end{equation}
Such {subgraphs} $A$ satisfy the condition of Theorem \ref{thm:subdiamoffA} {that $|\Vcal(A)| = O(\vep^{-2} \log (\vep^3 V))$}, so we bound
\[
	 \sum_v \P_p(\partial B_v(r) \neq \emptyset \off {\Vcal(A)}) \leq (1+o(1)) V \P_p(\partial B_v(r) \neq \emptyset).
\]
Applying this bound and summing \eqref{e:twosum} over $A$ and $u$ gives
\[
	\text{(II)} \le (1+o(1))V^2 \Pp(\partial B(r) \neq \emptyset)^2.
\]
Comparing with \eqref{pf:diamfirstmom} we get that (II)$=(1+o(1))\E [D_r]^2$, which, together with our previous estimate, implies that
\begin{equation} \label{e.tight2ndmom} \E_p [D_r^2] = (1+o(1))\E [D_r]^2 \, . \end{equation}
The proof is now completed using the inequality $\P(Z>0) \geq \E [Z]^2/\E[ Z^2]$, valid for any non-negative random variable~$Z$. \qed
\medskip

\section{The component with the largest mixing time}\label{sec:mixing}

\subsection{Proof of the upper bound in Theorem \ref{thm:generalmaxdiam}(c)}
The upper bound follows from the lemma below, which is proved in \cite{NacPer08}.

\begin{lemma}[Corollary 4.2 from \cite{NacPer08}]\label{lem:commute} Let $G=(\Vcal,\Ecal)$ be a connected graph. The mixing time of a lazy simple random walk on $G$ satisfies
\[
	\Tmix(G,1/4) \le 8 |\Ecal| \diam(G).
\]
\end{lemma}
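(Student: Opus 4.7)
\medskip

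\noindent\textbf{Proof plan.} The plan is to route through classical electrical network theory and the standard hitting-time-to-mixing-time comparison, in three steps.

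First I would invoke the commute-time identity of Chandra, Raghavan, Ruzzo, Smolensky and Tiwari: writing $\tau_v$ for the first hitting time of $v$ by the non-lazy simple random walk on $G$, one has for every $u,v \in \Vcal$
\[
    \E_u[\tau_v] + \E_v[\tau_u] \;=\; 2|\Ecal|\,R_{\mathrm{eff}}(u,v),
\]
where $R_{\mathrm{eff}}(u,v)$ is the effective resistance between $u$ and $v$ in the unit-resistance electrical network associated with $G$. Second, using Rayleigh monotonicity together with series composition along any shortest $u$--$v$ path gives the elementary bound $R_{\mathrm{eff}}(u,v) \leq d_G(u,v) \leq \diam(G)$. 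Combining these yields
\[
    \max_{u,v}\E_u[\tau_v] \;\leq\; 2|\Ecal|\,\diam(G)
\]
for the non-lazy walk, and hence at most $4|\Ecal|\,\diam(G)$ for the lazy walk, since laziness exactly doubles expected hitting times.

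Third, I would invoke the standard comparison inequality $\Tmix(G;1/4) \leq 2\max_{u,v}\E_u[\tau_v]$ for lazy reversible Markov chains (see, e.g., \cite{LevPerWil09}), which combines with the previous display to give $\Tmix(G;1/4) \leq 8|\Ecal|\,\diam(G)$, as required.

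There is no genuine obstacle in this argument: it is a concatenation of three well-known inequalities, and the only subtlety is the bookkeeping of the constant $8$ (a factor $2$ from laziness, a factor $2$ from the hitting-to-mixing comparison, and the factor $2|\Ecal|$ in the commute-time identity). In fact, since the lemma is quoted verbatim from \cite{NacPer08} as Corollary~4.2, one may alternatively just cite that reference without redoing the derivation.
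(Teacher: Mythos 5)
The paper does not prove this lemma; it simply quotes it as Corollary~4.2 of \cite{NacPer08}, so there is no in-text argument to compare against. Your reconstruction---commute-time identity, Rayleigh monotonicity plus the series law to bound $R_{\mathrm{eff}}(u,v)$ by $d_G(u,v)\leq\diam(G)$, the factor-two correction for laziness, and the hitting-time-to-mixing-time comparison---is precisely the derivation in that reference, and your bookkeeping of $2\cdot 2\cdot 2|\Ecal|=8|\Ecal|$ recovers the stated constant. The one place to be careful about sourcing is the inequality $\Tmix(G;1/4)\leq 2\max_{u,v}\E_u[\tau_v]$: depending on the exact formulation in \cite{LevPerWil09} (or in Aldous--Fill, which is what \cite{NacPer08} actually cites) the factor may be stated as $4$ against the non-lazy hitting time rather than $2$ against the lazy one, but these are the same thing and either bookkeeping yields the required $8|\Ecal|\diam(G)$.
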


We know from Theorems \ref{thm:general1}(a) and \ref{thm:generalmaxdiam}(a) that for all clusters $\Ccal$ at $p = p_c(1-\vep)$ with $\vep =o(1)$ and $\vep^3 V \to \infty$, for all $\delta>0$ we have that $|\Ccal| \le (2+\delta)\vep^{-2} \log (\vep^3 V)$ and $\diam(\Ccal) \le (1+\delta) \vep^{-1} \log(\vep^3 V)$ with high probability. {This does not, however, directly imply a good estimate of the maximal number of edges in a cluster, which is what we need.}
The following lemma gives such an estimate, and the proof of the upper bound in Theorem \ref{thm:mixing} then follows.

\begin{lemma}\label{lem:dense} Assume the setting of Theorem \ref{thm:general1} and let $\Ecal_{\mathrm{max}}$ denote the number of edges of the component with {the} maximal number of edges. Then
\[
	\Pp \big(\Ecal_{\mathrm{max}} \ge 9 \vep^{-2} \log(\vep^3 V)\big) = o(1).
\]
\end{lemma}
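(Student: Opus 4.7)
The plan is to combine the cluster-size bound already established in Theorem~\ref{thm:general1}(a) with a control on the number of ``surplus'' edges in each cluster (i.e.\ the cyclomatic number, counting edges beyond a spanning tree). Write $\sigma(\Ccal) = |E(\Ccal)| - |\Ccal| + 1 \ge 0$, so that $|E(\Ccal)| = |\Ccal| - 1 + \sigma(\Ccal)$. By Theorem \ref{thm:general1}(a) (equivalently \eqref{e:largeclusterupper}), taking $\delta = 1/2$ say, we have $|\Cmax| \le (5/2)\vep^{-2}\log(\vep^3 V)$ with high probability. Hence it suffices to show that, with high probability, no cluster has surplus exceeding $(13/2) \vep^{-2}\log(\vep^3 V)$, since then $|E(\Cmax)| \le |\Cmax| + \sigma(\Cmax) \le 9\vep^{-2}\log(\vep^3 V)$.

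To bound the surplus, I would use a first-moment calculation via the BK-inequality. Observe that
\[
\sum_\Ccal |\Ccal| \,\sigma(\Ccal) \;=\; \sum_x \sigma(\Ccal(x)) \;=\; \sum_x \sum_{e=\{u,v\}} \mathbb{1}\{u,v \in \Ccal(x),\, e \text{ open},\, u \conn v \text{ off } e\}.
\]
For each edge $e=\{u,v\}$ and each vertex $x$, the event inside implies (splitting on whether $x$ first reaches $u$ or $v$ along a path avoiding $e$) the disjoint occurrence of $\{x \conn u\}$, $\{e \text{ open}\}$ and $\{u \conn v \text{ off } e\}$, or the analogous event with $u,v$ swapped. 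The BKR-inequality together with $\Pp(u \conn v \text{ off } e) \le \Pp(u \conn v)$ then gives
\[
\Ep\!\left[\sigma(\Ccal(0))\right] \;\le\; 2p \sum_y \Pp(0 \conn y) \sum_{w \sim y} \Pp(y \conn w).
\]
The inner sum over neighbors is $\Ep[|N(y) \cap \Ccal(y)|]$, which by \eqref{e:critlayer} (valid under our assumptions) and the triangle condition is $O(1)$. Combining with \eqref{e:chibds} gives $\Ep[\sigma(\Ccal(0))] = O(p\,\chi(p)) = O(1/(m\vep)) = o(1)$, so by linearity $\Ep[\sum_x \sigma(\Ccal(x))] = V \cdot o(1)$.

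Finally, apply Markov's inequality to $\sum_x \sigma(\Ccal(x))$. Since $\sigma(\Cmax) \ge s$ implies the cluster attaining this has many vertices (either by the lower-bound part of Theorem~\ref{thm:general1}(a), or directly via $|\Cmax| \ge \sqrt{2s}$), the sum is at least $|\Cmax|\,\sigma(\Cmax)$, and one concludes $\Pp(\sigma(\Cmax) \ge (13/2)\vep^{-2}\log(\vep^3 V)) = o(1)$. The main obstacle in this plan is quantitative sharpness: the naive Markov bound may be insufficient when $\vep^3 V$ grows very rapidly (say polynomially in $V$), in which case the control of $\Ep[\sigma(\Ccal(0))]$ and $\sum_{w\sim y}\Pp(y\conn w)$ has to be pushed further, using Theorems~\ref{thm:subvolume} and~\ref{thm:subdiam} of the paper, together with the full strength of the strong triangle condition \eqref{e:strongtriangle}, to convert ``$o(1)$'' into a rate that is integrable against $V$ over the entire slightly subcritical window.
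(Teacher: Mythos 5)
Your overall strategy is genuinely different from the paper's and, as you yourself half-suspect, it has a quantitative gap that is not easily repaired. The paper does not estimate the expected surplus at all. Instead it fixes $M = 3\vep^{-2}\log(\vep^3 V)$, uses \eqref{e:largeclusterupper} to rule out $|\Cmax|\ge M$, and then observes that \emph{conditionally on the vertex set of $\Ccal(x)$ and on an open spanning tree of it}, the number of extra open edges is stochastically dominated by a $\mathsf{Bin}(m|\Ccal(x)|,p)$ random variable; Chernoff then gives
$\Pp(|\Ccal(x)|\le M,\ \Ecal_x\ge 3M)\le e^{-c\vep^{-2}\log(\vep^3V)}$, which is $o(V^{-1})$ uniformly in the regime, and the union bound over $x$ finishes.

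The gap in your plan is exactly where you wave at it: first moment plus Markov is too weak. Even granting your bound $\Ep[\sigma(\Ccal(0))]=O(1/(m\vep))$, and even using the strong lower bound $|\Cmax|\ge c\vep^{-2}\log(\vep^3 V)$ rather than $\sqrt{2s}$, Markov applied to $\sum_x\sigma(\Ccal(x))\ge|\Cmax|\,\sigma(\Cmax)$ gives
\[
\Pp\big(\sigma(\Cmax)\ge s\big)\ \lesssim\ \frac{V\,\Ep[\sigma(\Ccal(0))]}{\vep^{-2}\log(\vep^3 V)\cdot s}\ \asymp\ \frac{V\vep^3}{m\,\log^2(\vep^3 V)}
\]
for $s\asymp\vep^{-2}\log(\vep^3V)$, and the right-hand side is not $o(1)$ throughout the window -- it diverges whenever $\vep^3 V$ grows faster than $m\log^2(\vep^3V)$, which includes most of the regime (e.g.\ $\vep=1/\log\log V$ on $Q_m$). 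The remedy you propose (sharper control of $\Ep[\sigma(\Ccal(0))]$ via Theorems~\ref{thm:subvolume} and~\ref{thm:subdiam}) cannot close the gap: $\sum_x\sigma(\Ccal(x))$ genuinely has expectation of polynomial order (it aggregates surplus from \emph{all} clusters, not just the largest), so Markov can only yield polynomial rather than the exponential decay that is needed to union-bound over $V$ starting vertices. Moreover, those theorems live under the stronger assumptions \eqref{e:assump0}--\eqref{e:assump2}, whereas the statement is proved in the paper under only the triangle-condition setting of Theorem~\ref{thm:general1}. There are also two smaller slips worth flagging: the displayed ``identity'' for $\sum_x\sigma(\Ccal(x))$ should be an inequality $\le$ (every non-spanning-tree edge is a cycle edge, but not conversely), and the disjoint-occurrence decomposition needs an intermediate vertex $z$ where the path from $x$ first meets the cycle -- without it, the witnesses for $\{x\conn u\}$ and $\{u\conn v\ \mathrm{off}\ e\}$ need not be edge-disjoint. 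Neither of these is fatal, but they compound the main problem, which is that a first-moment bound simply cannot replace the conditional Chernoff argument here.
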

\begin{proof}
Fix $\delta>0$ and write $M = 3 \vep^{-2} \log(\vep^3 V)$. We bound
\[
	\Pp \big(\Ecal_{\mathrm{max}} \ge 9 \vep^{-2} \log(\vep^3 V) \big)  \le \Pp(|\Cmax| \ge   M )  + \Pp\big(\Ecal_{\mathrm{max}} \ge  3M \, , \, |\Cmax| \leq M \big) \, .
\]
By Theorem \ref{thm:general1}(a) and our choice of $M$ the first term on the right-hand side of the above is $o(1)$ and it remains to show the second term is also $o(1)$.

To that aim, given a vertex $x$, we write $\Ecal_x$ for the number of edges of the connected component containing $x$. Conditioned on the vertex set of $\Ccal(x)$ and on a spanning tree of $\Ccal(x)$ that consists {only of} open edges (such a spanning tree could be, for instance, the BFS tree of $\Ccal(x)$) we have that $\Ecal_x - |\Ccal(x)|$ is stochastically dominated by a Binomial random variable with parameters $m|\Ccal(x)|$ and $p$. Thus, if $|\Ccal(x)|\leq M$, the probability of the event that $\Ecal_x \geq 3M$ is bounded above by a probability that the value of a Binomial $(mM,p)$ random variable exceeds $3M$. We use the standard Chernoff bound \cite[Theorem~2.1]{JanLucRuc00} that if $X \sim \mathsf{Bin}(n,q)$, then
\[
	 \P(X \ge nq + t) \le \exp\big(-t^2/(2nq + 2t/3)\big) \, ,
\]
for any $t>0$. Since $p=m^{-1}(1+o(1))$ we obtain that
\begin{equation}\label{e.usechernoff} \Pp \big ( |\Ccal(x)| \leq M \and \Ecal_x \geq 3M \big ) \leq e^{-c \vep^{-2} \log (\vep^3 V) } \, ,\end{equation}
for some universal $c>0$. It is straightforward to see that by our assumption on $\vep$ the latter quantity is $o(V^{-1})$ and so the probability that there exists such vertex $x$ is $o(1)$, concluding our proof.
\end{proof}

\subsection{Proof of the lower bound in Theorem \ref{thm:generalmaxdiam}(c)}

For the proof of the lower bound we use a lemma from \cite{NacPer08} for which we require some definitions:

\begin{enumerate}
\item For integer $\rad$ and vertex $v$ we call an edge $e$ a \emph{lane for $(v,\rad)$} if $e$ is an edge between $\partial B_v (j-1)$ and $\partial B_v (j)$ for some $0 < j < \rad$, and there exists an open path with first edge $e$ from $\partial B_v(j-1) $ to $\partial B_v(\rad)$ that does not pass through $\partial B_v(j-1)$.

\item For integers $\rad,j$ and $\ell$ with $0 < j < \rad$ we say that \emph{level $j$ has $\ell$ lanes for $(v,\rad)$} if there are at least $\ell$ edges between $\partial B_v(j-1)$ and $\partial B_v(j)$ that are a lane for $(v,\rad)$.

\item We say that \emph{$v$ is $\ell$-lane rich for $(k,\rad)$} if more than half the levels $j \in [k/2,k]$ have at least $\ell$ lanes for $(v,\rad)$.
\end{enumerate}

\begin{lemma}[Lemma 5.4 from \cite{NacPer08}]\label{lem:Tmixlb}
Let $G=(\Vcal,\Ecal)$ be a graph and $v \in \Vcal$. Suppose that $q,h,k,\rad$ and $\ell$ are positive integers satisfying:
\begin{itemize}
	\item $|B_v(h)| \ge q$, and
	\item $v$ is \emph{not} $\ell$-lane rich for $(k,\rad)$, and
	\item $|E(B_v(\rad))| < \tfrac13 E(\Ccal(v))$, and
	\item $h < k/(4\ell)$.
\end{itemize}
Then
\[
	\Tmix(G) \ge \frac{q k}{12 \ell}.
\]
\end{lemma}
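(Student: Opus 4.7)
\emph{Proof proposal.} The strategy is a bottleneck argument. Take $S := B_v(\rad)$: the hypothesis $|E(B_v(\rad))| < \tfrac13|E(\Ccal(v))|$ together with a routine edge count gives $\pi(S) \le 1/3$, so to conclude $\Tmix(G) \ge t := qk/(12\ell)$ it suffices to show that the lazy walk started at $v$ remains in $S$ with probability at least $2/3$ at time $t$, which in turn is implied by $\P_v(T_{\partial B_v(\rad)} > t) \ge 2/3$.

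The first key ingredient is the effective-resistance bound
\[
    R_{\mathrm{eff}}(v \leftrightarrow \partial B_v(\rad)) \ge k/(4\ell)
\]
in the unit-resistor network on $\Ccal(v)$. By the very definition of a \emph{lane}, non-lane edges support no current in the $v \to \partial B_v(\rad)$ unit flow and can be deleted without changing $R_{\mathrm{eff}}$. In the pruned network the lanes at each level $j$ form a cut separating $v$ from $\partial B_v(\rad)$; the Nash--Williams (series-parallel) inequality therefore yields $R_{\mathrm{eff}} \ge \sum_{j=1}^{\rad}|\mathrm{lanes}(j)|^{-1}$. The hypothesis that $v$ is \emph{not} $\ell$-lane rich for $(k,\rad)$ guarantees at least $k/4$ levels in $[k/2,k]$ with at most $\ell$ lanes each, producing the bound.

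The second ingredient upgrades the resistance estimate to a hitting-time estimate incorporating the factor $q$. The Green-function identity for the lazy reversible walk,
\[
    \mathbb{G}_{\partial B_v(\rad)}(v, x) = \deg(x)\bigl[R_{\mathrm{eff}}(v,\partial B_v(\rad)) + R_{\mathrm{eff}}(x,\partial B_v(\rad)) - R_{\mathrm{eff}}(v,x)\bigr],
\]
combined with the graph-distance bound $R_{\mathrm{eff}}(v,x) \le h$ and the triangle inequality for effective resistance, gives $\mathbb{G}_{\partial B_v(\rad)}(v,x) \ge \deg(x)\bigl(k/(4\ell) - h\bigr)$, a positive quantity by the assumption $h < k/(4\ell)$. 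Summing over $x \in B_v(h)$ and using $|B_v(h)|\ge q$ together with $\deg(x) \ge 1$ yields
\[
    \mathbb{E}_v[T_{\partial B_v(\rad)}] = \sum_x \mathbb{G}_{\partial B_v(\rad)}(v,x) \ge q\bigl(k/(4\ell) - h\bigr),
\]
and the standard concentration of hitting times of reversible chains around their mean then produces $\P_v(T_{\partial B_v(\rad)} \le qk/(12\ell)) \le 5/12$, completing the bottleneck argument.

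The main obstacle is that the bracket $k/(4\ell) - h$ can be very small compared to $k/\ell$ when $h$ is close to its permitted upper bound, so the crude estimate above does not immediately deliver the sharp coefficient $1/12$ in the conclusion. The remedy is to refine the resistance lower bound by summing contributions from levels outside $[k/2,k]$ (which do not enter the crude Nash--Williams bound for $v \to \partial B_v(\rad)$), or equivalently to observe that for a generic $x \in B_v(h)$ the effective distance $R_{\mathrm{eff}}(v,x)$ is considerably smaller than $h$ so that the triangle inequality is conservative. This bookkeeping, together with the precise quantitative form of the hitting-time concentration, is carried out in \cite[Lemma~5.4]{NacPer08}, which we would follow verbatim.
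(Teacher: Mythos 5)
The paper does not actually prove this lemma---it is quoted as a black box from \cite{NacPer08}---so there is no in-paper argument to compare against; what follows is an assessment of your sketch on its own terms.

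Your Nash--Williams step reaches the right conclusion, but the justification is off: it is not true that non-lane edges carry no current or can be deleted without changing $R_{\mathrm{eff}}$ (a non-lane edge can sit inside a dead-end component whose boundary vertices have unequal potentials and will then carry current; and deleting edges can only raise resistance, which is the wrong direction for the argument you describe). What is actually true, and is all you need, is that for each $j\in[k/2,k]$ the lane edges at level $j$ form an edge-cutset separating $B_v(j-1)\supseteq B_v(h)$ from $\partial B_v(\rad)$ (on any connecting path, take the last exit from $\partial B_v(j-1)$), and these cutsets are pairwise disjoint; Nash--Williams then gives $R_{\mathrm{eff}}(B_v(h),\partial B_v(\rad))\ge k/(4\ell)$ directly.

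The genuine gap is the final step. There is no ``standard concentration of hitting times of reversible chains around their mean,'' and the claimed bound $\P_v(T_{\partial B_v(\rad)}\le qk/(12\ell))\le 5/12$ does not follow from $\E_v[T_{\partial B_v(\rad)}]\gtrsim qk/\ell$. Hitting times generically have geometric-type upper tails, so their median sits well below the mean; even for an exact geometric of mean $M$ one has $\P(T>2M/3)\approx e^{-2/3}<7/12$, so the inequality you assert already fails there, and one can build reversible examples where the walk escapes in a single step with constant probability while $\E_v[T_Z]$ is enormous. The factor $q$ in $\E_v[T_Z]$ reflects the many vertices visited on a \emph{typical long} excursion, not a bound that holds with probability $2/3$. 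Truncating the Green-function sum at time $t$ and invoking the maximum principle also does not rescue the argument, because it needs an \emph{upper} bound on $R_{\mathrm{eff}}(x,\partial B_v(\rad))$, which the hypotheses do not provide.

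The clean route bypasses hitting times (and the Green-function identity and resistance triangle inequality) entirely, via the relaxation time. Take the harmonic $f$ with $f\equiv 1$ on $B_v(h)$ and $f\equiv 0$ on $\Ccal(v)\setminus B_v(\rad-1)$. Its Dirichlet energy is $\Ecal(f,f)=\big(2|E(\Ccal(v))|\,R_{\mathrm{eff}}(B_v(h),\partial B_v(\rad))\big)^{-1}\le 2\ell/(k|E(\Ccal(v))|)$, while $\mathrm{Var}_\pi(f)\ge \pi(B_v(h))\,\pi\big(\Ccal(v)\setminus B_v(\rad-1)\big)\ge \tfrac{q}{2|E(\Ccal(v))|}\cdot\tfrac{2}{3}$, using $|B_v(h)|\ge q$ and the edge count (note the bottleneck set should be $B_v(\rad-1)$, not $B_v(\rad)$, so that $E(B_v(\rad))<\tfrac13 E(\Ccal(v))$ converts cleanly into $\pi(B_v(\rad-1))<\tfrac13$). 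The variational characterization gives $\tau_{\mathrm{rel}}\ge \mathrm{Var}_\pi(f)/\Ecal(f,f)\ge qk/(6\ell)$ for the non-lazy walk (twice that for the lazy one), and the standard lower bound $T_{\mathrm{mix}}(1/4)\ge(\tau_{\mathrm{rel}}-1)\log 2$ then delivers $T_{\mathrm{mix}}\ge qk/(12\ell)$. This spectral route uses only the three quantitative hypotheses and avoids the hitting-time issue altogether.
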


Thus, our goal is to choose the parameters of the above lemma appropriately and to show that a vertex $v$ satisfying the assumptions of the lemma above exists. We fix a positive sequence $(\omega_m)$ such that $\omega_m=o(1)$ and
\begin{equation}\label{chooseomega} \omega_m^{{7}} \gg {1 \over \log(\vep^3 V)} \qquad \and \qquad \omega_m^2 \gg \alpha_m \, ,\end{equation}
where $\alpha_m$ is the sequence given in the statement of Theorem \ref{thm:generalmaxdiam}. We also fix some $\delta>0$ and set our parameters accordingly by:

\begin{equation}
\begin{split} r &= (1-\delta)\vep^{-1} \log(\vep^3 V) \, , \qquad  \rad  = \omega_m r \, , \qquad  k = \omega^2_m r \, , \\ h &= \omega^3_m r \, , \qquad
q = \omega^7_m \vep^{-2} \log(\vep^3 V) \, , \qquad \ell  = 1/(8\omega_m) \, .
\end{split} \label{mixingsetparameters}
\end{equation}

\begin{lemma}\label{lem:A123}
Assume the setting of Theorem \ref{thm:generalmaxdiam} and consider the choice of parameters in \eqref{chooseomega} and \eqref{mixingsetparameters}.
Then
\begin{enumerate}
\item[(a)] $\displaystyle \Pp \big(|B_v(h)| < q \text{\emph{ and }} \partial B_v(r) \neq \emptyset\big) = o\big(\Pp(\partial B_v(r) \neq \emptyset)\big)$.

\item[(b)] $\displaystyle \Pp \big(v \text{\emph{ is $\ell$-lane rich for $(k,\rad)$ and }} \partial B_v(r) \neq \emptyset \big) = o\big(\Pp(\partial B_v(r) \neq \emptyset)\big)$.

\item[(c)]	$\displaystyle \Pp \big(E(B_v(\rad)) > |\Ccal(v)|/3 \text{\emph{ and }} \partial B_v(r) \neq \emptyset \big) = o\big(\Pp(\partial B_v(r) \neq \emptyset)\big)$.
\end{enumerate}
\end{lemma}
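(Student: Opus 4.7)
For each of the three parts the goal is to show that the intersection of the stated atypical event with $\{\partial B_v(r) \neq \emptyset\}$ is negligible compared with the benchmark $\Pp(\partial B_v(r) \neq \emptyset) \ge c(1-\vep_m)^r/r$ from Lemma~\ref{lem:subdiam}(a). Parts (b) and (c) will both rest on conditioning on the event $\A_r(v,\eta)$ that $\eta$ is the lex-first shortest open path of length $r$, together with an analysis of the pendant subclusters of $\eta$ via Theorems~\ref{thm:subvolume} and~\ref{thm:subdiam}.

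\emph{Part (a)} is a direct application of Lemma~\ref{lem:diametervolume} with the choices $\alpha = \omega_m^3(1-\delta)\log(\vep_m^3 V_m)$ and $\beta = \omega_m^7\log(\vep_m^3 V_m)$, so that $\alpha\vep_m^{-1}=h$ and $\beta\vep_m^{-2}=q$. The hypotheses $\beta \le \alpha^2/(32C_2)$ and $r \ge 4(\alpha\vee 1)\vep_m^{-1}$ will hold for large $m$ since $\omega_m \to 0$ and $\log(\vep_m^3 V_m) \to \infty$. The decisive factor $\exp(-c\alpha^2/\beta) = (\vep_m^3 V_m)^{-c(1-\delta)^2/\omega_m}$ then decays faster than any fixed power of $\vep_m^3 V_m$, dominating the polynomial losses $(1-\vep_m)^{-h}\vep_m r/\alpha = O(\omega_m^{-3})(\vep_m^3 V_m)^{O(\omega_m^3)}$ appearing upon dividing by $\Pp(\partial B_v(r) \neq \emptyset)$.

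For \emph{Part (b)}, I would condition on $\A_r(v,\eta)$. Because $\eta$ is a shortest path, its edge at level $j$ is automatically a lane, so any additional lane at level $j$ corresponds to an extra open edge out of $\partial B_v(j-1)$ that begins an open path to $\partial B_v(\rad)$ off $\partial B_v(j-1)$. Decomposing $\partial B_v(j-1)$ into the pendant subclusters of $\eta(0),\dots,\eta(j-1)$ and applying Theorem~\ref{thm:subdiam} to each (after dropping the no-shortcut constraint, which only increases the bound) gives $\Ep[\text{extra lanes at level }j \mid \A_r(v,\eta)] \le C'(1-\vep_m)^{\rad-j}$. Markov then yields $\Pp(\text{level }j\text{ has }\ge\ell\text{ lanes}\mid\A_r(v,\eta)) \le 8C'\omega_m(1-\vep_m)^{\rad-j}$; summing over $j \in [k/2,k]$ and applying Markov once more to the count of lane-rich levels yields $\Pp(v\text{ is }\ell\text{-lane-rich}\mid\A_r(v,\eta)) = O((\omega_m\vep_m r)^{-1})(1-\vep_m)^{\rad-k} = o(1)$ thanks to $\omega_m\log(\vep_m^3 V_m) \to \infty$ from \eqref{chooseomega}. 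Integrating over $\eta$ completes the part. The main obstacle will be justifying the pendant picture cleanly under the lex-first conditioning.

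For \emph{Part (c)}, I will set $K = c_0\vep_m^{-2}\log(\vep_m^3 V_m)$ with a small $c_0 > 0$ and use the decomposition
\[
\{E(B_v(\rad)) > |\Ccal(v)|/3\} \subseteq \{|\Ccal(v)| \le K\} \cup \{|B_v(\rad)| > c_0\omega_m^{1/2}\vep_m^{-2}\log(\vep_m^3 V_m)\} \cup \{E(B_v(\rad)) > 3|B_v(\rad)|\}.
\]
The first term intersected with $\{\partial B_v(r) \neq \emptyset\}$ will be controlled by \eqref{e:smallCdiam}, producing a factor $(\vep_m^3 V_m)^{-\Omega(1/c_0)}$ that beats the benchmark once $c_0$ is small. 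The third will be bounded by a Chernoff argument identical in form to Lemma~\ref{lem:dense}: conditionally on a BFS spanning tree of $B_v(\rad)$ made of open edges, the excess open edges in $B_v(\rad)$ are stochastically dominated by $\mathrm{Bin}(m|B_v(\rad)|,p)$ with mean $(1+o(1))|B_v(\rad)|$, so exceeding $3|B_v(\rad)|$ has probability $\exp(-c|B_v(\rad)|)$. The middle term I will handle by Markov after establishing the sharp expectation bound
\[
\Ep\bigl[|B_v(\rad)| \cdot \mathbb{1}_{\{\partial B_v(r) \neq \emptyset\}}\bigr] \le C\rad\vep_m^{-1}\cdot\Pp(\partial B_v(r) \neq \emptyset),
\]
proven by conditioning on $\A_r(v,\eta)$ and writing $|B_v(\rad)|$ as the sum over $i\in[0,\rad]$ of the pendant subcluster sizes of $\eta(i)$ within distance $\rad-i$; Theorem~\ref{thm:subvolume} then bounds each pendant's conditional expected size by $C\min(\rad-i,\vep_m^{-1})$, summing to $C\rad\vep_m^{-1}$. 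Markov finally gives $O(\omega_m^{1/2})\Pp(\partial B_v(r) \neq \emptyset)$ for the middle term. The main obstacle will be this sharp expectation bound, since the standard tree-graph inequality is loose by a factor of $\vep_m^{-1}$ here and is insufficient.
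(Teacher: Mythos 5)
Your part (a) is exactly the paper's argument: the same choices $\alpha=\omega_m^3(1-\delta)\log(\vep_m^3 V_m)$, $\beta=\omega_m^7\log(\vep_m^3 V_m)$ in Lemma~\ref{lem:diametervolume}, divided by the lower bound of Lemma~\ref{lem:subdiam}(a).

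Part (b) has a genuine gap. Your pendant picture only captures lane paths that \emph{stay off} $\eta$; it omits lane paths that merge back into $\eta$ at some later level $\eta(t)$ and then ride $\eta$ out to $\partial B_v(\rad)$. Those are what the paper calls case (iii), and they are the bulk of its proof. Conditioned on $\Acal_r(v,\eta)$, such a shortcut lane can have probability of order $p^2\approx m^{-2}$ at a given level $j$ (on the hypercube, take $\ole$ adjacent to both $\eta(j-1)$ and $\eta(j+1)$; if the two edges $\{\eta(j-1),\ole\}$ and $\{\ole,\eta(j+1)\}$ are open, then $\{\eta(j-1),\ole\}$ is a lane). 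In regimes where $\vep^3 V$ is polynomial in $V$ and $\omega_m$ is as small as \eqref{chooseomega} allows, one has $(1-\vep)^{\rad-j}=e^{-\Theta(m\omega_m)}\ll m^{-2}$, so your claimed per-level bound $\Ep[\text{extra lanes at }j\mid\Acal_r(v,\eta)]\le C'(1-\vep)^{\rad-j}$ is simply false. The phrase ``dropping the no-shortcut constraint only increases the bound'' is where the reasoning breaks: the one-arm event from $\ole$ that rides $\eta$ is not disjoint from $\Acal_r(v,\eta)$ (it positively correlates through $\eta$'s open edges), so BKR does not let you replace the conditional probability by the unconditional one-arm probability from Theorem~\ref{thm:subdiam}. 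To control these shortcuts the paper writes out the BKR decomposition into cases (i)--(iii), then handles case (iii) via $\Jcal^{\le},\Jcal^{\ge}$, the uniform connection bound \eqref{e:unifconnprob2}, the path-count $(p(m-1))^{\ell}=1+o(1)$ from \eqref{e:assump1}, and Lemma~\ref{lem:heatkernelbound}; none of that machinery appears in your sketch.

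Part (c) follows the same three-way decomposition and the same expectation bound $\Ep[|B(\rad)|\indi_{\{\partial B(r)\neq\emptyset\}}]\le C\rad\vep^{-1}\Pp(\partial B(r)\neq\emptyset)$ as the paper (the paper derives it more directly via the single BKR step $\Acal_r(0,\eta)\circ\{\eta(j)\conn x\}$, which you could adopt and avoid your ``sharp expectation'' worry). One imprecision in your Chernoff term: the bound $\exp(-c|B_v(\rad)|)$ for $\Pp(E(B_v(\rad))>3|B_v(\rad)|)$ is not uniformly small because $|B_v(\rad)|$ can be $O(1)$. You need to retain the constraints from the other two cases, which force $E(B(\rad))\gtrsim K$ while $|B(\rad)|\lesssim\omega_m^{1/2}K$, giving $\exp(-cK)=o(V^{-1})$; the paper achieves this directly by fixing both thresholds in its middle term $\Pp(E(B(\rad))\ge M,\ |B(\rad)|\le M/10)$. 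This is an easy fix, unlike the gap in part (b).
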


\noindent See Figure \ref{fig:Pf65} for a sketch of these three events.
\begin{figure}
	\includegraphics[width = .8\textwidth]{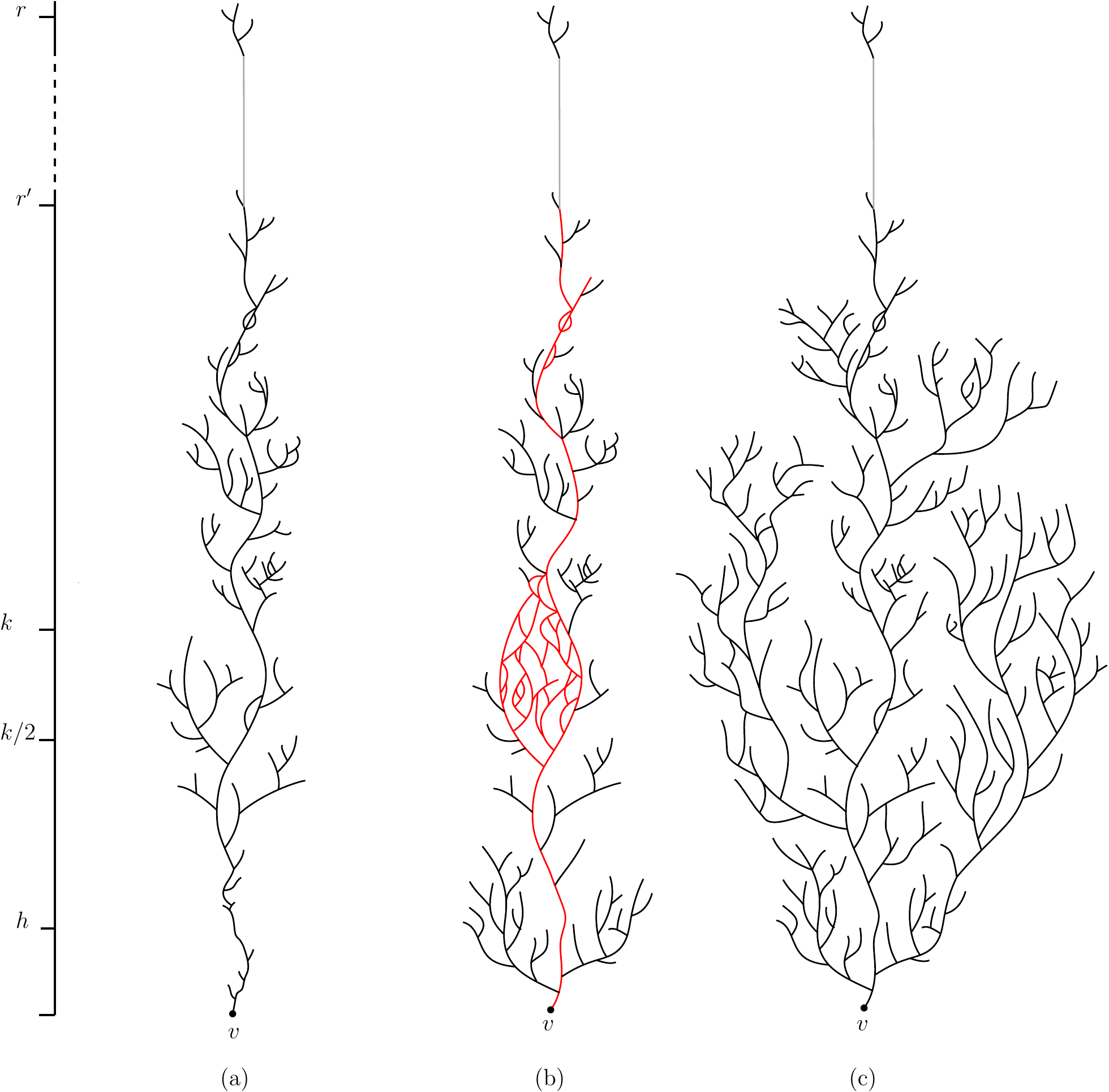}
	\caption{\label{fig:Pf65} A sketch of the three events. In (a) the tree is very skinny up to height $h$. In (b) there are many lanes between $k/2$ and $k$ for $(v,\rad)$ (the lanes have been colored red). In (c) the tree is very fat up to height $\rad$.}
\end{figure}

\proof
(a) This follows by Lemmas \ref{lem:subdiam} and \ref{lem:diametervolume} and our choice of $h,q$ and $r$. Indeed, applying Lemma \ref{lem:diametervolume} with $\alpha = \omega_m^3 (1-\delta) \log(\vep^3 V)$ and $\beta = \omega_m^7 \log(\vep^3 V)$, and observing that the conditions of the lemma are met for $m$ sufficiently large, we obtain the bound
\[
	\Pp \big(|B_v(h)| < q \text{ and } \partial B_v(r) \neq \emptyset\big) \le \frac{C\vep}{\log(\vep^3 V)} (1-\vep)^{r} \frac{(\vep^3 V)^{(\omega_m^3-\frac{c}{ \omega_m})(1-\delta)}}{\omega_m^3}.
\]
Since $\omega_m = o(1)$ and $\vep^3 V \to \infty$, the claim now follows by Lemma \ref{lem:subdiam}.
\medskip

(b) Let $L$ denote the number of lanes between levels $k/2$ and $k$. If $v$ is $\ell$-lane rich for $(k,\rad)$, then $L \ge \ell k /4$, so by Markov's inequality,
\[
	\Pp(v \text{ is $\ell$-lane rich for }(k,\rad) \text{ and } \partial B_v(r) \neq \emptyset) \le \frac{\Ep[L \indi_{\{ \partial B_v(r) \neq \emptyset\}}]}{\ell k /4}.
\]
The claim follows if we prove that $\Ep[L \indi_{\{\partial B_v(r) \neq \emptyset\}}] \le C k \Pp(\partial B_v(r) \neq \emptyset)$, since $1/\ell = o(1)$.

Recall from \eqref{e:Agammadef} and \eqref{gammar} that ${\Acal_r (v,\eta)}$ is the event that $\eta$ is the first $p$-open path of length $r$ emanating from ${v}$, and that $\uplus_\eta {\Acal_r (v,\eta) = \{\partial B_v(r) \neq \emptyset\}}$. We condition on $\eta$:
\[
	\Ep[L \indi_{\{\partial B_v(r) \neq \emptyset\}}] = \sum_\eta \Ep[L \mid {\Acal_r (v,\eta)}] \Pp({\Acal_r (v,\eta)}).
\]
Conditioned on ${\Acal_r (v,\eta)}$, any edge that is a lane for $(v,\rad)$ can either belong to $\eta$, or be on a path extending from $\eta$ to $\partial B_v(\rad)$ without intersecting $\eta$ again, or be on a path starting from a vertex of $\eta$ and ending in a different vertex of $\eta$. More precisely, if ${\Acal_r (v,\eta)}$ happens and $e = \{\ule,\ole\}$ is a lane such that $\ule \in \partial B_v(j-1)$ and $\ole \in \partial B_v(j)$ for some $j \in [k/2,k]$, then one of the following must occur:
\begin{enumerate}
	\item $e$ is an edge of $\eta$, or
	\item there exists $s \in [0, k]$ and $t \in [1,k]$ such that $\big\{\eta(s) \stackrel{=t}{\lrfill} \ule \text{ off }\eta \cup \{e\} \big\}$ {and } $\{e \hbox{ is open}\}$ { and }$\big\{\partial B_{\ole} (\rad-s-t-1) \neq \emptyset \text{ off }\eta \cup \{e\} \cup B_{\eta(s)}(t) \big\}$
		occurs, or
	\item there exist $s \in [0,k]$, $t \in [s+1, r]$, and a $p$-open path $\gamma$ that is edge-wise disjoint from $\eta$, with $\gamma(0) = \eta(s)$, $\gamma(|\gamma|) = \eta(t)$, and $e \in \gamma$.
\end{enumerate}
\smallskip

We now bound the contributions to $\Ep[L \indi_{\{\partial B_v(r) \neq \emptyset\}}]$ from summing over the edge $e$ of each of these cases separately, showing all three cases contribute at most order $k\Pp(\partial B_v(r) \neq \emptyset)$.
\medskip

Case (i) is easy and, conditioned on ${\Acal_r (v,\eta)}$, contributes precisely the edges of $\eta$ between levels $k/2$ and $k$, i.e., precisely $k/2$ edges to $L$.
\medskip

To bound the contribution of case (ii) conditioned on ${\Acal_r (v,\eta)}$, we {again condition} on the ball $B_{\eta(s)}(t)${, and proceed as before. Some care is required, because this case has a new subtlety: both $\Acal_r (v,\eta)$} and the {events of case} (ii) are not monotone events (with respect to adding edges) and {there may not exist} disjoint witnesses for their occurrence (in particular, the closed edges that determine that $\eta$ is the first shortest path may be needed to {determine} the shortest connection between $\eta(s)$ and $\ole$). {Thus} we cannot appeal to BKR-inequality. We have to use the {off-method and the attendant conditioning scheme (described in Section \ref{sec:offmethod})} with care{:}

We condition on ${\Acal_r (v,\eta)}$ and on all the closed edges {for} which at least one endpoint is {part} of $B_v(r)$. Because the events in (ii) are all ``off $\eta$'',
{conditioning on $\Acal_r (v,\eta)$ does not affect the events in (ii). The conditioning only affects (ii) through the the closed edges of the conditioning,} which we can simply add to the set of edges that the events in (ii) are ``off'' of. This way we may use our usual conditioning scheme and use Theorems~\ref{thm:subvolume} and \ref{thm:subdiam}.
{Doing so, we find that the} last event in (ii) contributes a factor $\vep (1-\vep)^{\rad-s-t-1}$ by Theorem \ref{thm:subdiam}, {and} the second event contributes a factor $p=(1+o(1))m^{-1}$ by \eqref{e:pcestimate}. We sum the probabilities of the first event over $\ule$ and get a factor $(1-\vep)^{t}$ by Theorem \ref{thm:subvolume}. We then sum over $\ole,t,s$ and get a contribution of order at most $k(1-\vep)^{r-k}=o(k)$ from case (ii).
\medskip

The estimate of the contribution of case (iii) is more involved, but very similar to the proof of Theorem \ref{thm:subdiam}. Analogous to $\Fcal^{\le}(\eta)$ and $\Fcal^{\ge}(\eta)$ defined in the course of the proof of Theorem \ref{thm:subdiam},
define the events $\Jcal^{\le}(\eta, e)$ and $\Jcal^{\ge}(\eta, e)$ to be the events that case (iii) occurs and that $|\gamma| \le 2\tmix+1$ or $|\gamma| \ge 2\tmix+1$, respectively. We bound these two events separately, starting with $\Jcal^{\ge}(\eta, e)$, {for} which we bound its probability conditioned on ${\Acal_r (v,\eta)}$:

If $\Jcal^{\ge}(\eta, e)$ occurs, then either the part of $\gamma$ leading to $\ule$ is longer than $\mnot$, or the part of $\gamma$ starting from $\ole$ is longer than $\mnot$. Thus by the BKR-inequality (where, as usual, the witnesses to ${\Acal_r (v,\eta)}$ are the open edges of $\eta$ together with closed edges, and the other two are the corresponding open paths) we get
\[\begin{split}
	\sum_{e} \Pp \big(\Jcal^{\ge}(\eta, e) \mid {\Acal_r (v,\eta)} \big) \le &  \sum_{s = 0}^k \sum_{t=s+1}^r \sum_{e}p \Big(\Pp^{\eta} \big(\eta(s) \stackrel{\ge \tmix}{\lrfill} \ule \big) \Pp^{\eta}\big(\ole \conn \eta(t) \big)\\
	& \qquad \qquad \quad + \Pp^{\eta} \big(\eta(s) \conn \ule \big) \Pp^{\eta}\big(\ole \stackrel{\ge \tmix}{\lrfill}  \eta(t) \big) \Big) \\
	 \le & C k r \vep^{-2} V^{-1},
\end{split}
\]
where the second bound follows from \eqref{e:unifconnprob2} and \eqref{e:chibds}. Since $r \vep^{-2} V^{-1} = o(1)$, the contribution from $\Jcal^{\ge}(\eta, e)$ is $o(k)$.

The contribution of $\Jcal^{\le}(\eta, e)$ to $\Ep[L \indi_{\{\partial B_v(r) \neq \emptyset\}}]$ is bounded differently. We write $L$ as a sum of indicators over the edge $e$, and for each edge separately we take the union of $\eta$ of the events $\Jcal^{\le}(\eta, e)$. The event $\uplus_\eta \Jcal^{\le}(\eta, e)$ implies that there exist integers $s, t, \ell$, with $s + t \le r$ and $s \leq k$ and $\ell \le 2 \tmix $, and vertices $x, y$ such that the following events occur disjointly:
\[\begin{split}
	\Mcal_1(x,y,s,t) &:=  \big\{ 0 \stackrel{=s}{\lrfill} x \big\} \cap \big\{x \stackrel{=t}{\lrfill} y \off B_0(s) \big\} \cap \big\{\partial B_y (r-t-s) \neq \emptyset \off B_0(t) \big\},\\
	\Mcal_2(x,y,e,l) & := \big\{ \exists \gamma \, : \,  \gamma \text{ is a $p$-open path, } |\gamma| = \ell , \gamma(0) = x, \gamma(\ell) =y, e \in \gamma \big\}.
\end{split}\]
Applying the BKR-inequality yields
\[
	\sum_{e} \Pp \big(\uplus_\eta \Jcal^{\le}(\eta, e) \big) \le \sum_{x,y,e} \sum_{\substack{ s \le k, s+t \leq r, \\ l \le 2 \tmix+1}} \Pp(\Mcal_1(x,y,s,t)) \Pp(\Mcal_2(x,y,e,\ell)).
\]

To bound the probability of $\sum_{e} \Pp(\Mcal_2(x,y,e,\ell))$ we first apply the union bound to $\gamma$. To this end, write $\Gamma_\ell (x,y)$ for the set of all {simple} paths of length $\ell$ from $x$ to $y$. We bound
\[
	\sum_{e} \Pp(\Mcal_2(x,y,e,\ell)) \le \sum_{\gamma \in \Gamma_\ell(x,y)} \sum_{e} \Pp(\gamma \text{ open, } e \in \gamma) \le \ell |\Gamma_\ell (x,y)| p^\ell,
\]
where the factor $\ell$ is due to the fact that any fixed $\gamma \in \Gamma_\ell(x,y)$ contains $\ell$ edges, so the sum over $e$ contains exactly $\ell$ non-zero terms whose value is $p^\ell$. We bound $|\Gamma_\ell(x,y)|$ by $m (m-1)^{\ell-1} \mathbf{p}^\ell(x,y)$ as usual. By \eqref{e:assump1} we have that $(p(m-1))^\ell = O(1)$ for $\ell \le 2 \tmix$, so we obtain
\[
	\sum_{e} \Pp(\Mcal_2(x,y,e,\ell)) \le C \ell \mathbf{p}^\ell(x,y).
\]
Compare this with the bound in \eqref{e.onearmbound2} and note that the current bound is a factor $\vep^{-1}$ bigger.

The rest of the analysis is now performed exactly as the analysis of four cases {of} $\uplus_\eta \Fcal^{\le}(\eta)$ in last part of the proof of Theorem \ref{thm:subdiam} (starting with \eqref{e.onearmbound1}). Deriving the four bounds analogous to \eqref{e:smallell1}--\eqref{e:smallell4}, we get
\[
	\sum_{e} \Pp \big(\uplus_\eta \Jcal^{\le}(\eta, e) \big) \le C  \vep (1-\vep)^r \left(\frac{k r \tmix^2}{V} + \frac{k \vep^{-1} \log(\vep^{-1}) \tmix^2}{V} + \frac{k \alpha_m}{\log V} + \frac{\alpha_m \vep^{-1} \log(\vep^{-1})}{\log V} \right).
\]
We make two remarks about this derivation:
(1) we need $\omega_m^2 \log(\vep^3 V) \to \infty$, because the proof requires that $k \ge \vep^{-1}$, and
(2) it follows immediately from \cite[proof of Theorem~4.5]{HofNac12} that Lemma~\ref{lem:heatkernelbound} remains valid upon replacing $\tmix$ by $2 \tmix$.

The lower and upper bounds from Lemma~\ref{lem:subdiam}(a) and Theorem~\ref{thm:subdiam} differ by a factor $\log(\vep^3 V)$ for our choice of $R$, so the desired bound follows if each of the four factors on the right-hand side is $o(k / \log(\vep^3 V))$. The first error term satisfies this bound by our choice of $r$ in \eqref{mixingsetparameters} and by {\eqref{e:assump0}}, and similarly for the second term. The third term is bounded likewise simply because $\alpha_m=o(1)$. The fourth satisfies the required bound since we assumed $\omega_m^2 \gg \alpha_m$.
\medskip

Combining the contributions due to (i), (ii), and (iii), we obtain
\[
	\Ep[L \indi_{\{\partial B_v(r) \neq \emptyset\}}] = {(1+o(1))k \over 2} \Pp(\partial B_v(r) \neq \emptyset),
\]
as desired. This completes the proof of (b).
\medskip

(c)
Let $M = c' \vep^{-2} \log(\vep^3 V)$, where $c'>0$ is a small constant that will be chosen soon. If $E(B(\rad)) \geq \tfrac13 |\Ccal|$ occurs, then either $|\Ccal| \leq 3M$, or $|\Ccal|\geq 3M$ and $E(B(\rad)) \leq M$. Thus we bound
\[\begin{split}
	\Pp\big(E(B(\rad)) \geq \tfrac13 |\Ccal| , \partial B(r) \neq \emptyset\big)
		&\le \Pp\big(|\Ccal| \leq 3 M , \partial B(r) \neq \emptyset\big) \\
		&\quad + \Pp\big(E(B(\rad)) \geq M , |B(\rad)| \leq \tfrac{1}{10} M\big) \\
		&\quad + \Pp\big(|B(\rad)| \geq \tfrac{1}{10} M , \partial B(r) \neq \emptyset\big).
\end{split}\]

We now show each term is $o(\Pp(\partial B(r) \neq \emptyset))$. We first choose $c'>0$ small enough so that by \eqref{e:smallCdiam} and Lemma \ref{lem:subdiam}(a) we get that the first term is $o(\Pp(\partial B(r) \neq \emptyset))$. The second term is bounded by $o(V^{-1})$ as in \eqref{e.usechernoff}, which is much smaller than $\Pp(\partial B(r) \neq \emptyset)$ by our choice of $r$ and Lemma \ref{lem:subdiam}(a). For the third term we use a similar proof strategy as in Lemma \ref{lem:secmom}. Using Markov's inequality we bound
\[
	\Pp\big(|B(\rad)| > \tfrac{1}{10} M, \partial B(r) \neq \emptyset\big) \le \frac{10 \Ep[|B(\rad)| \indi_{\{\partial B(r) \neq \emptyset\}}]}{M}.
\]
As usual we define ${\Acal_r (0,\eta)}$ as the event that a simple path $\eta$ of length $r$ is the first open shortest path of length $r$ {started at $0$,} and write
\[
	\Ep[|B(\rad)| \indi_{\{\partial B(r) \neq \emptyset\}}] = \sum_\eta \sum_x \Pp\big(0 \stackrel{\le \rad}{\lrfill} x \text{ and } {\Acal_r (0,\eta)}\big).
\]
If $\big\{0 \stackrel{\le \rad}{\lrfill} x \big\}$ and ${\Acal_r (0,\eta)}$ occur, then there must exist an integer $j \in [0,\rad]$ such that ${\Acal_r (0,\eta)} \circ \{\eta(j) \conn x\}$ occurs. Applying the BKR inequality and summing over $x$ (using \eqref{e:chibds}) and then $\eta$ gives
\[
	\Ep[|B(\rad)| \indi_{\{\partial B(r) \neq \emptyset\}}] \le \sum_{j=0}^{\rad} \Pp(\partial B(r) \neq \emptyset) \chi(p) \le (1 + o(1)) \vep^{-1} \rad \Pp(\partial B(r) \neq \emptyset).
\]
As a result,
\[
	\Pp\big(|B(\rad)| > \tfrac{1}{10} M, \partial B(r) \neq \emptyset\big) \le \frac{10 \vep^{-1} \rad}{M} \Pp(\partial B(r) \neq \emptyset).
\]
By our choices of $\rad$ and $M$, and since $\omega_m = o(1)$, this bound is also $o(\Pp(\partial B(r) \neq \emptyset))$, completing the proof of (c).\qed
\medskip

\proof[Proof of the lower bound of Theorem \ref{thm:generalmaxdiam}(c)]
Let $\omega_m, h,k,q,\rad, r,$ and $\ell$ be the parameters chosen in \eqref{chooseomega} and \eqref{mixingsetparameters}. Note that to prove the required lower bound for any $\omega_m=o(1)$ we can assume without loss of generality that $\omega_m$ satisfies \eqref{chooseomega}. Let $L_r$ denote the number of vertices $v$ satisfying
\begin{itemize}
    \item $\partial B_v(r) \neq \emptyset$, and
	\item $|B_v(h)| \ge q$, and
	\item $v$ is \emph{not} $\ell$-lane rich for $(k,\rad)$, and
	\item $|E(B_v(\rad))| < \tfrac13 E(\Ccal(v))$, and
	\item $|\Ccal(v)| \le 5 \vep^{-2} \log(\vep^3 V)$.
\end{itemize}
Also recall the definition of $D_r$ from \eqref{e:Drdef}. {By definition $L_r \leq D_r$. By Lemma~\ref{lem:A123} and \eqref{e:Clarge}} it follows that $\E_p[L_r] = (1-o(1))\E_p[D_r]$. Furthermore, in \eqref{e.tight2ndmom} it is proved that
$\Ep[D_{r}^2] = (1+o(1))\Ep[D_{r}]^2$ and so it follows that $\E_p[L_r^2] = (1+o(1))\E_p[L_r]^2$. By \eqref{pf:diamfirstmom} it follows that $\E_p[L_r] \to \infty$ and so we conclude that with high probability $L_r \to \infty$ and in particular there exists at least one cluster that satisfies the conditions of Lemma \ref{lem:Tmixlb}. That is, with high probability,
\[
	\Tmix(\Ccal^\star) \ge \frac{q k}{12 \ell} \ge  c \omega_m^{10} \vep^{-3} \log (\vep^3 V)^2.
\]
Since the choice of $\omega_m$ was arbitrary, this completes the proof. \qed

\subsection*{Acknowledgments}
TH is supported by the Netherlands Organisation for Scientific Research (NWO) through the Gravitation {\sc Networks} grant 024.002.003. AN is supported by ISF grant 1207/15, and ERC starting grant 676970.

\begin{small}
\bibliographystyle{abbrv}
\bibliography{TimsBib}
\end{small}
\end{document}